\newcommand{\N}{\mathbb{N}}                     
\newcommand{\Z}{\mathbb{Z}}                     
\newcommand{\R}{\mathbb{R}}                     
\newcommand{\C}{\mathbb{C}}                     
\newcommand{\T}{\mathbb{T}}                     
\newcommand{\set}[2]{\left\{{#1}\mid{#2}\right\}}       
\newcommand{\then}{\Rightarrow}                 
\newcommand{\im}{\mathrm{Im\,}}                 
\newcommand{\codim}{\mathrm{codim}}           
\newcommand{\crit}{\mathrm{crit}\,}               
\newcommand{\A}{\mathbb{A}}
\newtheorem{mainthm}{\sc Theorem}           
\newtheorem{thm}{\sc Theorem}[section]               
\newtheorem*{thm*}{\sc Theorem}               
\newtheorem*{cor*}{\sc Corollary}        
\newtheorem{lem}[thm]{\sc Lemma}            
\newtheorem{prop}[thm]{\sc Proposition}     
\newtheorem{defn}[thm]{\sc Definition}      
\newtheorem{rem}[thm]{\sc Remark}           
\newtheorem*{rem*}{\sc Remark}           
\title{Floer homology on the time-energy extended phase space}
\author{Alberto Abbondandolo and Will J. Merry} 
\date{}	
\begin{document}

\maketitle

\begin{abstract}
We show how Rabinowitz Floer homology can be seen as a standard Floer homology for fixed period Hamiltonian orbits on an extended phase space. 

\tableofcontents
\end{abstract}

\section*{Introduction}
\addcontentsline{toc}{section}{\numberline{}Introduction}

Rabinowitz Floer homology is a version of Floer homology for Hamiltonian periodic orbits having prescribed energy.  It was introduced by Cieliebak and Frauenfelder in \cite{cf09}. The setting is the following. One considers a smooth autonomous Hamiltonian $H$ on an exact symplectic manifold $(M,\omega)$ having a compact regular level $\Sigma:= H^{-1}(0)$, which is of restricted contact type with respect to a global primitive $\lambda$ of $\omega$. The manifold $M$ is assumed to be symplectically convex at infinity, and the Hamiltonian $H$ is assumed to be constant and positive outside from a compact set. This situation arises, for instance, when $M$ is the completion of a Liouville domain $W$ and $\Sigma$ is the boundary of $W$ (see e.g.\ \cite{sei06b}). In this introduction we also assume that the first Chern class of $TM$ vanishes on tori, but this assumption can be eliminated if one does not want to develop a $\Z$--graded theory. The periodic orbits of the Hamiltonian vector field $X_H$ induced by $H$ having energy zero are critical points of the free period action functional, or Rabinowitz action functional, which is defined as
\[
\mathcal{A}_H(x, \tau) := \int_{\T} x^*\lambda - \tau \int_{\T} H(x(t))\, dt, \qquad (x,\eta) \in C^{\infty}(\T,M) \times \R,
\]
where $\T:= \R/\Z$. More precisely, the critical points of $\mathcal{A}_H$ are either elements $(x_0,0)$, where $x_0$ is a constant loop in $\Sigma$, or elements $(x,\tau)$, where $\tau \neq 0$ and $z(t):= x(t/\tau)$ is a closed orbit of $X_H$ of period $|\tau|$ and energy $H(z)=0$. The negative $L^2$ gradient equation for this functional is the following Floer equation for $u:\R \times \T \rightarrow M$ coupled with an ODE for $\eta: \R \rightarrow \R$:
\begin{equation}
\label{rfeq}
\begin{split}
\partial_s u + J_t (u,\eta)&  \bigl(  \partial_t u - \eta X_H(u) \bigr) = 0, \\
\eta'(s) &= \int_{\T} H(u(s,t))\, dt.
\end{split}
\end{equation}
Here $J = \{ J_t(\cdot, \tau)\}_{(t,\tau) \in \T \times \R}$ is a family of almost complex structures on $M$  
compatible with the symplectic form $-\omega$. 
In most of the literature about Rabinowitz Floer homology, the almost complex structure does not depend on $\tau$. However, an explicit dependence on $\tau$ seems to be necessary in order to achieve transversality. See Section \ref{rfh_trans} below, where we explicitly discuss the transversality issue in Rabinowitz Floer homology.

The standard counting of zero-dimensional spaces of finite energy solutions of (\ref{rfeq}) defines a boundary operator
\[
\partial : RF_*(H,f) \rightarrow RF_{*-1}(H,f)
\]
on a graded $\Z_2$-vector space $RF_*(H,f)$ generated by critical points of an auxiliary Morse function $f$ on the critical set of $\mathcal{A}_H$, which generically is an at most countable union of finite dimensional compact manifolds. The $\Z$-grading of $RF_*(H,f)$ is defined by the transverse Conley-Zehnder index of periodic orbits plus the Morse index of critical points of $f$. The homology of the chain complex $(RF_*(H,f),\partial)$ is called the Rabinowitz Floer homology of the pair $(\Sigma,M)$ and is independent on the choice of the auxiliary data used in its definition. It is denoted by
\[
\{RFH_k(\Sigma,M)\}_{k\in \Z}.
\]

Rabinowitz Floer homology has several applications: it can be used to prove various versions of the Weinstein conjecture on the existence of periodic orbits on contact type hypersurfaces \cite{cf09,afm13}, gives obstructions to the existence of exact open symplectic embeddings \cite{cf09}, can be used to study Moser's problem of leafwise intersections , both in codimension one \cite{af10,af10b,af12,mmp11,kan12,kan14,mena14} and in higher codimensions \cite{kan13}. In addition Rabinowitz Floer homology is a productive tool in the study of the  symplectic properties of the energy levels of magnetic flows \cite{cfo10,bf11,me11} and orderability questions in contact geometry \cite{alfr12c,alme13,we13}. See \cite{alfr12} for a recent survey on the role of Rabinowitz Floer homology in some of these topics. This homology has been computed in the case of a displaceable $\Sigma$, for which $RFH(\Sigma,M)$ is actually zero \cite{cf09}, and in the case of the boundary $\Sigma$ 
of a domain in the cotangent bundle of a closed manifold $Q$ which is star-shaped with respect to the zero-section, for which $RFH_*(\Sigma,M)$ recovers the singular homology of the free loop space of $Q$ in positive degree and the singular cohomology of this space in negative degree \cite{cfo10,as09}. More generally, Rabinowitz Floer homology fits into an exact sequence involving symplectic homology and cohomology \cite{cfo10}.

The aim of this paper is to explain how Rabinowitz Floer homology can be seen as a standard Floer homology for fixed period Hamiltonian orbits on the extended phase space 
\[
\tilde{M} := M \times T^* \R.
\]
The starting observation is that the critical points of $\mathcal{A}_H$ are in one-to-one correspondence with the 1-periodic orbits $\tilde{x} : \T \rightarrow \tilde{M}$ of the Hamiltonian vector field induced by the smooth Hamiltonian
\[
\tilde{H}: \tilde{M} \rightarrow \R , \qquad \tilde{H}(x,\tau,\sigma) = \tau H(x), \qquad \forall \, (x,\tau,\sigma) \in M \times T^*\R = M \times \R \times \R^*.
\]
More precisely, the orbits of this system come in $\R^*$-families, because $\tilde{H}$ is invariant with respect to translations of the variable $\sigma\in \R^*$, and the set of 1-periodic orbits of $X_{\tilde{H}}$ modulo this action of $\R^*$ is in one-to-one correspondence with the critical points of $\mathcal{A}_H$.
This $\R^*$-symmetry could be broken by adding to the Hamiltonian $\tilde{H}$ a function of $\sigma\in \R^*$ having a unique critical point, but we prefer not to do this in order to keep the number of non-canonical choices to a minimum. 
In the above picture, $\sigma$ plays the role of a ``time dilation''. Since the dual variable to time is energy, we refer to $\tilde{M}$ as to the ``time-energy extended phase space''.  

The above observation suggests to look at the standard fixed period action functional for loops in $\tilde{M}$,
\[
\mathbb{A}_{\tilde{H}} (\tilde{x}) := \int_{\T} \tilde{x}^* \tilde{\lambda} - \int_{\T} \tilde{H}(\tilde{x}(t))\, dt,
\]
where $\tilde{\lambda} := \lambda \times (-\sigma\, d\tau)$. We choose a loop of compatible almost complex structures $\tilde{J}_t$ on $\tilde{M}$ of the form 
\[
\tilde{J}_t( x ,\tau ,\sigma) = J_t(x,\tau) \times \widehat{J},
\]
where $\widehat{J}$ is the standard complex structure on $T^* \R \cong \C$, $(\tau,\sigma) \mapsto \sigma + i \tau$, and as above, $J = \{ J_t(\cdot, \tau)\}_{(t,\tau) \in \T \times \R}$ is a family of almost complex structures on $M$  compatible with the symplectic form $-\omega$. Then the Floer negative gradient equation of $\mathbb{A}_{\tilde{H}}$ for 
\[
\tilde{u} := (u,\eta,\zeta) : \R \times \T \rightarrow M \times \R \times \R^* = \tilde{M}
\]
takes the form
\begin{equation}
\label{floerintro}
\begin{split}
\partial_s u + J_t(u , \eta) ( \partial_t u - \eta X_H(u)) &= 0, \\
\partial_s \eta + \partial_t \zeta - H(u) &= 0, \\
\partial_s \zeta - \partial_t \eta &= 0.
\end{split}
\end{equation}
The first result of this paper is that the above equation produces a well-defined Floer homology:

\begin{mainthm}
The Floer complex $(F_*(\tilde{H},f),\partial)$ over $\Z_2$ produced by the critical points of an auxiliary Morse function $f$ on the manifold of 1-periodic orbits of $X_{\tilde{H}}$ and by the zero-dimensional spaces of finite energy solutions of (\ref{floerintro}) modulo the $\R^*$-action is well-defined. \end{mainthm}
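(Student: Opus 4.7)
My plan is to reproduce the standard Floer package---transversality, compactness, gluing, and $\partial^2 = 0$---for equation (\ref{floerintro}), coping with two non-standard features: the target $\tilde M$ is non-compact in the $T^*\R$-direction, and solutions come in $\R^*$-families which must be quotiented.

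As a preliminary, a direct computation of $X_{\tilde H}$ shows that its $1$-periodic orbits are precisely $\tilde x(t) = (x(t), \tau_0, \sigma_0)$ with $\tau_0, \sigma_0 \in \R$ constants, $x$ satisfying $\dot x = \tau_0 X_H(x)$, and $H \circ x \equiv 0$ (the last condition being forced by the periodicity of the $\sigma$-component). This identifies the manifold of $1$-periodic orbits with $\mathrm{Crit}(\mathcal A_H) \times \R^*$, on which $\R^*$ acts freely by $\sigma_0$-translation, and $\mathbb A_{\tilde H}$ restricts to $\mathcal A_H$. For transversality I would exploit the observation that, in the complex variable $w := \zeta + i\eta$, the last two equations of (\ref{floerintro}) combine into a single inhomogeneous Cauchy-Riemann equation $\bar\partial w = \tfrac{i}{2} H(u)$ on the cylinder. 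The linearized operator then has a triangular structure---a Floer operator on $u$ (coupled to $\eta$ through $\eta X_H$) and a $\bar\partial$ on $w$ (driven by variations of $u$ through $dH$)---and a universal moduli space argument using generic $\tau$-dependent $J_t(\cdot,\tau)$, as in Section \ref{rfh_trans}, should suffice.

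The heart of the proof is compactness. Since $\tilde H = \tau H(x)$ is neither bounded nor compactly supported on $\tilde M$, the classical maximum principle does not apply to $(\eta,\zeta)$. I would first bound the $u$-component in a compact subset of $M$ via the symplectic convexity of $M$ and a Rabinowitz-type maximum principle, which goes through once the multiplier $\eta$ is known to be pointwise bounded. The $\eta$-bound comes from the identity
\[
\frac{d}{ds}\int_{\T} \eta(s,t)\,dt = \int_{\T} H(u(s,t))\,dt,
\]
obtained by integrating the second equation of (\ref{floerintro}) over $t$, together with the fact that the asymptotic values $\tau_\pm$ are bounded in terms of the energy. Elliptic estimates for the Cauchy-Riemann system $\bar\partial w = \tfrac{i}{2}H(u)$ then promote this to an $L^\infty$-bound on $\eta$, and after gauge-fixing by, say, $\int_{\T}\zeta(0,t)\,dt = 0$ the same estimate bounds $\zeta$ modulo $\R^*$. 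A careful bootstrap will be needed to untangle the chicken-and-egg relation between the $u$- and $\eta$-bounds.

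With compactness and transversality established, the boundary operator $\partial$ is defined by counting zero-dimensional $\R^*$-quotient moduli spaces, and $\partial^2 = 0$ follows from the standard gluing/breaking analysis of one-dimensional components. The auxiliary Morse function $f$ on the critical manifold is handled via Frauenfelder's Morse-Bott cascade construction. The genuine obstacle throughout is the compactness step, whose success hinges on combining Rabinowitz-type action estimates for $\eta$ with the inhomogeneous Cauchy-Riemann structure of the $(\eta,\zeta)$-subsystem, rather than on any direct maximum principle argument in $\tilde M$.
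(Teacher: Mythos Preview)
Your overall architecture is right, and you have correctly isolated the inhomogeneous Cauchy--Riemann structure of the $(\eta,\zeta)$ subsystem as the mechanism that upgrades weak bounds to $L^\infty$ bounds via Calderon--Zygmund. Two points deserve correction.

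First, the ``chicken-and-egg'' you anticipate between the $u$- and $\eta$-bounds does not actually occur. Because $H$ is constant outside the compact set $M_0$, the vector field $X_H$ vanishes there, and by hypothesis the almost complex structure is $(t,\tau)$-independent and of contact type on $M\setminus M_0$. Hence on $u^{-1}(M\setminus M_0)$ the first equation of (\ref{floerintro}) reduces to a genuine Cauchy--Riemann equation $\partial_s u + J(u)\partial_t u = 0$, regardless of $\eta$. The radial coordinate $r\circ u$ is then subharmonic, and the maximum principle confines $u$ to $M_0$ with no a priori control on $\eta$ required. This is how the paper proceeds, and it dissolves the circularity.

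Second, and more substantively, your proposed route to the $\eta$-bound has a gap. Knowing the averaged ODE $\hat\eta'(s)=\int_{\T}H(u(s,t))\,dt$ together with the boundedness of the asymptotic values $\tau_\pm$ does \emph{not} bound $\hat\eta$ on all of $\R$: nothing prevents $\int_{\T}H(u(s,\cdot))\,dt$ from equalling $+1$ on an arbitrarily long interval and then $-1$ on an equally long one, producing an arbitrarily large excursion of $\hat\eta$ while still returning to $\tau_+$. The paper's argument, following Cieliebak--Frauenfelder, is different and crucially uses the restricted contact-type hypothesis. One first shows (Lemma \ref{lem1} and Lemma \ref{tau}) that whenever $\|\nabla\mathbb A_{\tilde H}(\tilde u(s,\cdot))\|_{L^2(\T)}$ is small, the loop $x=u(s,\cdot)$ lies in a neighbourhood of $\Sigma$ where $\lambda(X_H)\geq \tfrac{2}{3}\alpha_0>0$, and then the action bound $|\mathbb A_{\tilde H}(\tilde u(s,\cdot))|\leq A$ together with a preliminary $L^2$-bound on $\zeta-\hat\zeta$ forces $\|\eta(s,\cdot)\|_{L^\infty(\T)}\leq L(A)$. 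The set of such $s$ has complement of bounded measure by Chebyshev and the energy bound, and one propagates the estimate to all $s$ using $\|\partial_s\eta\|_{L^2(\R\times\T)}\leq\sqrt{E}$. Only after this $L^2$-in-$t$ bound on $\eta$ (and the analogous one on $\zeta-\hat\zeta$) is in hand does the Calderon--Zygmund step you describe apply. The contact-type lower bound on $\lambda(X_H)|_\Sigma$ is the missing ingredient in your sketch; without it the compactness argument does not close.
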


The main difficulty that we have to address in the construction of this Floer complex is the fact that the Hamiltonian $\tilde{H}$ is not coercive - it is unbounded from below and from above. Thus the standard arguments for showing that the Floer cylinders - that is, solutions of (\ref{floerintro}) - with prescribed asymptotics take values in a compact subset of $\tilde{M}$ cannot be applied. 
We overcome this difficulty in Section \ref{estisec}, where we prove that uniform bounds for the energy of solutions of the Floer equation associated to $\tilde{H}$  imply that these solutions take values in a compact subset of $\tilde{M}$, modulo translations of the variable $\sigma\in \R^*$.

The other non-standard feature of the Hamiltonian $\tilde{H}$ is that its periodic orbits come in non-compact families, due to the presence of the $\R^*$-action. Since also the Floer equation (\ref{floerintro}) is invariant with respect to this action, we deal with this issue by simply modding out the $\R^*$-action in our counting process. This involves revisiting  the proof of transversality in Hamiltonian Floer theory, in order to check that we can actually work with $\R^*$-invariant almost complex structures $\tilde{J}$. This is done in Section \ref{indcomptra}, where we also compute the Fredholm index of the operators which arise when linearising the equations (\ref{floerintro}). Also after modding out the $\R^*$-action, the periodic orbits of $X_{\tilde{H}}$ come in continuous families, because the Hamiltonian $\tilde{H}$ is autonomous. This fact is dealt with by the now standard way of counting Floer trajectories with cascades (see \cite{fra03,fra04,bh13} for the finite dimensional case). 

In our second main result we show that the Floer homology of $\tilde{H}$ is isomorphic to the Rabinowitz Floer homology of $(\Sigma,M)$. More precisely, we prove the following:

\begin{mainthm}
\label{maindue}
There is a chain complex isomorphism 
\[
\Phi: RF_*(H,f) \rightarrow F_*(\tilde{H},f).
\]
\end{mainthm}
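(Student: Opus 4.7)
I would define $\Phi$ directly on generators by the canonical bijection between $\crit \mathcal{A}_H$ and the quotient by $\R^*$ of the manifold of $1$-periodic orbits of $X_{\tilde H}$, already described in the Introduction. Since both complexes are built from critical points of the same auxiliary Morse function $f$ on this shared manifold of orbits, this identification upgrades to an obvious $\Z_2$-linear isomorphism between the underlying chain groups $RF_*(H,f)$ and $F_*(\tilde H, f)$. To check that $\Phi$ preserves the $\Z$-grading I would use the index computation of Section \ref{indcomptra}: because $\tilde J_t = J_t \times \widehat J$ splits, and because every $X_{\tilde H}$-orbit is constant in the $T^*\R$-factor, the Conley--Zehnder index of the extended orbit reduces to the transverse Conley--Zehnder index of the corresponding Rabinowitz orbit, so the two degrees coincide.

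\textbf{Comparison of differentials.} The substantive content is that $\Phi$ intertwines the two Floer differentials. Both differentials are defined via the cascade formalism, and the Morse cascade portions use the same gradient flow of $f$ on the common manifold of orbits, so it suffices to identify the zero-dimensional moduli spaces of Floer cylinders connecting $1$-periodic orbits. For each ordered pair $\tilde x_\pm$ of such orbits with corresponding Rabinowitz critical points $c_\pm$, I would exhibit a bijection
\[
\mathcal{M}^{RF}(c_-, c_+) \;\longleftrightarrow\; \mathcal{M}^{F}(\tilde x_-, \tilde x_+)/\R^*.
\]
In one direction, given a solution $(u,\eta,\zeta)$ of (\ref{floerintro}) set $\bar\eta(s) := \int_\T \eta(s,t)\,dt$; averaging the second and third equations in $t$ and using that $\zeta$ is $t$-periodic yields $\bar\eta'(s) = \int_\T H(u(s,t))\,dt$, which is the second equation of (\ref{rfeq}). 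In the opposite direction, given a Rabinowitz solution $(u, \eta(s))$, the last two equations of (\ref{floerintro}) constitute an inhomogeneous Cauchy--Riemann system on the cylinder $\R \times \T$ with source $H(u)$ and prescribed asymptotics, which admits a unique solution modulo the $\R^*$-action; one then needs to verify that the first equation of (\ref{floerintro}) holds with this $\eta(s,t)$.

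\textbf{Main obstacle.} The difficulty is precisely this last step: the first equation of (\ref{rfeq}) involves the scalar $\eta(s)$, while the first equation of (\ref{floerintro}) involves $\eta(s,t)$, and these functions do not coincide in general for a finite-energy trajectory. I would handle this through a continuation argument: introduce a parameter $\kappa \in [0,1]$ and replace $\eta$ in the first equation of (\ref{floerintro}) by $\kappa\, \bar\eta(s) + (1-\kappa)\, \eta(s,t)$, keeping the other two equations unchanged. At $\kappa = 0$ one recovers (\ref{floerintro}); at $\kappa = 1$ the first equation becomes the Rabinowitz equation for $(u, \bar\eta)$, and modding out the $\R^*$-action together with the uniqueness just noted reduces the parameterized moduli space to $\mathcal{M}^{RF}(c_-, c_+)$. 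The compactness estimates of Section \ref{estisec} persist in the parameterized setting because the deformation affects only bounded terms and leaves the decisive bounds on $u$ and on the $L^\infty$-norm of $\eta$ untouched; combined with the transversality results of Section \ref{indcomptra}, standard one-parameter cobordism arguments then yield the desired bijection of zero-dimensional moduli spaces.

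\textbf{Conclusion.} With the bijection of moduli spaces in hand, $\Phi$ commutes with the differentials and is therefore a chain complex isomorphism. Because $\mathbb{A}_{\tilde H}$ and $\mathcal{A}_H$ agree on critical points---the term $\int_\T \sigma\, d\tau$ vanishes on loops with $\tau$ constant in $t$---the map $\Phi$ is moreover filtered by action, which offers an independent cross-check that no cancellations between moduli spaces of different action levels are being confused.
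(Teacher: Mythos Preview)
Your approach diverges from the paper's, and contains a genuine gap.

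\textbf{What the paper does.} The paper does \emph{not} take $\Phi$ to be the tautological identification of generators. On the contrary, it explicitly warns that ``there is no reason to believe that the natural identification \ldots\ produces a chain map.'' Instead, $\Phi$ is defined by counting solutions of a \emph{hybrid problem}: a Rabinowitz half-trajectory $v=(u^-,\eta^-)$ on $(-\infty,0]$ coupled at $s=0$ to an extended-Floer half-trajectory $\tilde u=(u^+,\eta^+,\zeta^+)$ on $[0,+\infty)$, via $u^-(0,\cdot)=u^+(0,\cdot)$ and $\eta^+(0,\cdot)\equiv\eta^-(0)$. This coupling is a Lagrangian boundary condition, so the hybrid problem is Fredholm; the paper proves compactness (Proposition \ref{bounds-hybrid}) and computes the index (Theorem \ref{newindexcompthm}). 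The resulting $\Phi$ is then a chain map by standard gluing. To see it is an isomorphism, the paper shows that the matrix of $\Phi$ is upper-triangular with respect to the action filtration, with $1$'s on the diagonal: the diagonal entries come from stationary solutions, for which automatic transversality holds (Lemma \ref{autotrans}), while off-diagonal entries with $\mathcal{A}_H(\hat x^-)<\A_{\tilde H}(\hat x^+)$ vanish by the energy identity.

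\textbf{The gap in your argument.} Your continuation family $\kappa\in[0,1]$ does not establish that the tautological identification is a chain map. A one-parameter cobordism of zero-dimensional moduli spaces does not yield a bijection, nor even equality of counts: the boundary of the $1$-dimensional parametrized moduli space contains, in addition to the fibres over $\kappa=0$ and $\kappa=1$, broken configurations arising at intermediate $\kappa$. These extra boundary components are precisely the chain-homotopy correction terms; they encode the statement $\partial_1\circ K + K\circ\partial_0 = \Phi_1-\Phi_0$ for a degree-one map $K$, not $\partial_1=\partial_0$. So at best your argument would show that the two complexes are chain-homotopy equivalent, which is weaker than producing an explicit chain isomorphism, and it certainly does not show that the tautological identification commutes with the differentials.

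There is also a secondary issue: your interpolated first equation involves the $t$-average $\bar\eta(s)$, making the system nonlocal in $t$. This is not a Floer equation for any almost complex structure on $\tilde M$, so neither the compactness estimates of Section \ref{estisec} nor the transversality results of Section \ref{indcomptra} apply as stated; both would require fresh proofs adapted to the nonlocal term. The paper's hybrid construction avoids this entirely by keeping each half-cylinder governed by one of the two original (local) equations.
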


As already observed, there is a one-to-one correspondence between the critical set of $\mathcal{A}_H$ and the quotient  of the critical set of $\mathbb{A}_{\tilde{H}}$ by the $\R^*$-action. However, there is no obvious correspondence between the spaces of negative gradient flow lines of these two functionals. There are however some relationships between these gradient flow lines, the most notable of which being the following: if $(u,\eta,\zeta): \R \times \T \rightarrow \tilde{M}$ is a solution of (\ref{floerintro}), then the average of $\eta$ over $\T$, that is the function
\[
\hat{\eta}(s) := \int_{\T} \eta(s,t)\, dt,
\]
solves the second equation in (\ref{rfeq}). 

Despite this relationship between solutions of (\ref{rfeq}) and (\ref{floerintro}), there is no reason to believe that the natural identification between $\crit \mathcal{A}_H$ and $\crit \mathbb{A}_{\tilde{H}}/\R^*$ produces a chain map. This makes the construction of the isomorphism $\Phi$ of Theorem \ref{maindue} non-trivial. Its definition is based on counting solutions of the following hybrid problem: we consider tuples $(u^-,\eta^-,u^+,\eta^+,\zeta^+)$ where 
\[
u^- : (-\infty,0] \times \T \rightarrow M, \qquad \eta^- : (- \infty, 0] \to \R
\]
is a solution of (\ref{rfeq}) with a prescribed asymptotics at $-\infty$,
\[
(u^+,\eta^+,\zeta^+) : [0,+\infty) \times \T \rightarrow M \times \R \times \R^* = \tilde{M}
\]
is a solution of (\ref{floerintro}) with a prescribed asymptotic at $+\infty$, and the following coupling condition at $s=0$ hold:
\[
u^-(0,t) = u^+(0,t), \qquad \eta^+(0,t) = \eta^-(0) \qquad \forall t\in \T.
\]
The above coupling condition can be seen as a Lagrangian boundary condition for $M \times \tilde{M}$-valued maps on a half-cylinder. Together with the compactness results which are proved in Section \ref{estisec}, this fact allows us to construct the chain map $\Phi$. In order to prove that $\Phi$ is an isomorphism, we use a standard argument (see \cite{as06}): we show that automatic transversality holds at stationary solutions of the above hybrid problem and that the difference of action at the asymptotics of a non-stationary solution is strictly positive. These facts imply that $\Phi$ is an isomorphism, since it can be represented by a (possibly infinite) upper triangular matrix whose diagonal entries equal 1.

\paragraph{Outlook.} One advantage of having an interpretation of Rabinowitz Floer homology as a Floer homology for fixed period Hamiltonian orbits is that its $S^1$-equivariant version can be constructed by standard arguments as in \cite{bo10}. A direct construction of equivariant Rabinowitz Floer homology has been recently presented by Frauenfelder and Schlenk in \cite{fs13}.

Another the advantages of being able to see Rabinowitz Floer homology as a Floer homology for fixed period Hamiltonian orbits is that the latter can be expected to have a ring structure. Indeed, Floer homology for fixed period Hamiltonian orbits on closed symplectic manifolds or on Liouville domains carries the pair-of-pants product, which is constructed by counting solutions of the Floer equation on the pair-of-pants Riemann surface, that is, the sphere minus three points (see \cite{sch95,sei06b,as10}). 

It is possible to show that the Floer homology for the unbounded Hamiltonian $\tilde{H}$ also carries a standard pair-of-pants product. However, some preliminary investigations suggest that this product is not very rich, and might be even zero. This product has degree $-(n+1)$, and this fact excludes that it restricts to the intersection product on $\Sigma$, when seen on the filtered Floer homology corresponding to a small interval containing $0$.

Nevertheless, it is seems to be possible to define a ``better'' product on the Floer homology of $\tilde{H}$, which is of degree $-n$. This product should be unital (and in particular non-zero), and under the isomorphism with Rabinowitz Floer homology it should restrict to the intersection product on the hypersurface $\Sigma$.  The definition of this product involves counting pair-of-pants with varying conformal structure. We hope to study it a sequel to the present paper.

\emph{Acknowledgement.} The possibility to see Rabinowitz Floer homology as a standard Floer homology on an extended phase space was suggested to us by Alexandru Oancea, whom we would like to thank also for the many discussions we had during the preparation of this article. 
Our gratitude goes also to Peter Albers and Matthias Schwarz, with whom we had countless conversations about products and Rabinowitz Floer homology. This work is part of the first author's activity within CAST, a Research Network Program of the European Science Foundation. The second author is supported by an ETH Postdoctoral Fellowship. 

\numberwithin{equation}{section}

\section{Floer homology on the extended phase space}
\label{compsec}

\subsection{The extended phase space} 

Let $(M,\omega)$ be an exact $2n$-dimensional symplectic manifold and 
let $H\in C^{\infty}(M)$ be an autonomous Hamiltonian. The corresponding Hamiltonian vector field $X_H$ is defined by $\imath_{X_H} \omega = - dH$. We assume that $H$ is constant and positive outside from a compact subset of $M$. We also assume that 0 is a regular value of $H$ and that the (necessarily compact) energy level
\[
\Sigma := H^{-1}(0)
\]
is non-empty and of restricted contact type, meaning that $\omega$ has a global primitive $\lambda$ such that $\alpha:=\lambda|_{\Sigma}$ is a positive contact form on $\Sigma$. 
The latter assumption means that the $(2n-1)$-form $\alpha\wedge d\alpha^{n-1}$ is everywhere positive on the oriented hypersurface $\Sigma$, where the orientation of $\Sigma$ is induced by that of $M$ by seeing $\Sigma$ as the boundary of the compact set $\{H\leq 0\}$. It follows that the restriction of $X_H$ to $\Sigma$ has the same direction as the Reeb vector field $R$ which is associated to $\alpha$, and hence
\begin{equation}
\label{alpha0}
\alpha(X_H|_{\Sigma}) \geq \alpha_0 
\end{equation}
for some positive constant $\alpha_0$. Of course, one could normalize $H$ in such a way that $X_H$ coincides with the Reeb vector field on $(\Sigma,\alpha)$, and hence $\alpha(X_H|_{\Sigma})=1$.

We are interested in the closed Reeb orbits on $\Sigma$, or equivalently in the closed orbits of $X_H$ with arbitrary period and energy $H=0$. This fixed energy problem can be transformed into a fixed period problem on an enlarged phase space. Indeed, consider the manifold
\[
\tilde{M} := M \times T^* \R,
\]
equipped with the one-form 
\[
\tilde{\lambda} = \lambda \times (-\lambda_0),
\]
$\lambda_0$ being the standard Liouville form on $T^*\R$, that is, \[
\lambda_0 = \sigma d\tau,
\]
where we are using coordinates $(\tau,\sigma) \in \R \times \R^* = T^* \R$.  The corresponding symplectic form on $\tilde{M}$ is denoted by $\tilde{\omega}=\omega \times (d\tau \wedge d\sigma)$. On $\tilde{M}$ we consider the Hamiltonian $\tilde{H}$ which is defined as
\[
\tilde{H}(x,\tau,\sigma) = \tau H(x), \qquad \forall (x,\tau,\sigma) \in M \times T^* \R,
\]
whose associated Hamiltonian vector field on $(\tilde{M},\tilde{\omega})$ is
\[
X_{\tilde{H}}(x,\tau,\sigma) = \tau X_H(x) +  H(x) \partial_{\sigma}.
\]
Therefore, $(x,\tau,\sigma):\R \rightarrow \tilde{M}$ is an orbit of $X_{\tilde{H}}$ if and only if it solves the equations
\begin{equation}
\label{hamsys}
\begin{split}
x' &= \tau X_H(x), \\
\tau' &= 0, \\
\sigma'&=H(x).
\end{split}
\end{equation}
The second equation says that $\tau$ is constant. If $\tau\neq 0$, then the first equation says that the reparametrized curve $t\mapsto x(t/\tau)$ is an orbit of $X_H$. If $\tau=0$, then the first equation says that $x$ is constant. In both cases, $H(x)$ is constant, so the third equation says that $\sigma$ is an affine function with slope $H(x)$. Therefore, the flow $\phi^t_{X_{\tilde{H}}}$ of the vector field $X_{\tilde{H}}$ on $\tilde{M}$ has the form
\begin{equation}
\label{theflow}
\phi_{X_{\tilde{H}}}^t(x_0,\tau_0,\sigma_0) = (\phi_{\tau_0 X_H}^t(x_0),\tau_0,\sigma_0 + t H(x_0)) = (\phi_{X_H}^{\tau_0 t}(x_0),\tau_0,\sigma_0 + t H(x_0)),
\end{equation}
for all $t\in \R$ and $(x_0,\tau_0,\sigma_0)\in \tilde{M}$.

We are interested in particular in 1-periodic orbits of $X_{\tilde{H}}$, that is in
closed curves $(x,\tau,\sigma): \T \rightarrow \tilde{M}$ which solve the above equations, where $\T:= \R/\Z$. In this case, the periodicity of $\sigma$ forces $H(x)$ to be zero, so that $\sigma$ is actually constant. If $\tau\neq 0$, then $t\mapsto x(t/\tau)$ must be a $|\tau|$-periodic orbit on $\Sigma=H^{-1}(0)$ ($|\tau|$ needs not be the minimal period). If $\tau=0$, then $x$ is an arbitrary constant path on $\Sigma=H^{-1}(0)$. Therefore, the one-periodic orbits of $X_{\tilde{H}}$ form the set
\[
\begin{split}
\mathcal{P}_1(X_{\tilde{H}}) = &\set{(x,\tau,\sigma)}{\tau\in \R\setminus \{0\}, \; \sigma\in \R^*, \; t\mapsto x(t/\tau) \mbox{ is a $|\tau|$-periodic orbit of $X_H$ on $\Sigma$}} \\ &\cup \bigl( \Sigma \times \{0\} \times \R^* \bigr).
\end{split}
\]
Notice that this set is invariant with respect to the free symplectic action of $\R^*$:
\begin{equation}
\label{Raction}
\R^* \times \tilde{M} \rightarrow \tilde{M}, \qquad (\xi,(x,\tau,\sigma)) \mapsto (x,\tau,\sigma+\xi).
\end{equation}
Indeed, this is a consequence of the fact that the Hamiltonian $\tilde{H}$, and hence also the system (\ref{hamsys}), is invariant with respect to this action. 

The elements of $\mathcal{P}_1(X_{\tilde{H}})$ are critical points of the Hamiltonian action functional
\[
\mathbb{A}_{\tilde{H}} (\tilde{x}) = \int_{\T} \tilde{x}^* \tilde{\lambda} - \int_{\T} \tilde{H}(\tilde{x}) \, dt = \int_{\T} x^* \lambda - \int_{\T} \sigma \tau' dt - \int_{\T} \tau H(x)\, dt , 
\]
where $\tilde{x}=(x,\tau,\sigma)$ is an element of $C^{\infty}(\T,\tilde{M})$.

\subsection{The Floer equation on the extended phase space} 
\label{floeq}
In order to write the Floer negative gradient equation for $\mathbb{A}_{\tilde{H}}$, we start by fixing a smooth family 
\[
\mathrm{J} =  \{ J_t(\cdot, \tau) \}_{ (t, \tau) \in \T \times \R}
\]
  of almost complex structures on $M$, which are compatible with $-\omega$, meaning that for each $(t,x,\tau) \in \T \times M \times \R$,
\[
\langle \cdot,\cdot \rangle_{J_t(x,\tau)} := \omega_x(J_t(x, \tau)\cdot,\cdot), 
\]
defines a Riemannian metric on $T_x M$, whose associated norm is denoted by $|\cdot|_{J_t(x,\tau)}$. For compactness purposes we will require that 
\begin{equation}
\label{eq:acs_bounded}
	\sup_{\tau \in \R } \| J_t(\cdot , \tau) \|_{C^{\ell}} < + \infty, \qquad \forall \ell\in \N. 
\end{equation}	
where $\| \cdot \|_{C^{\ell}} $  is the $C^{\ell}$-norm taken with respect to some background metric on $M$.  Let us denote by $\mathcal{J}$ the set of all such families $ \mathrm{J}$ of $-\omega$-compatible almost complex structures for which \eqref{eq:acs_bounded} is satisfied. 

Given $ \mathrm{J }\in \mathcal{J}$ we then consider the loop $\tilde{J}_t$ of almost complex structures on $\tilde{M}$ which is defined for $\tilde{x} = (x, \tau, \sigma) \in \tilde{M}$ by
\begin{equation}
\label{special form of acs}
\tilde{J}_t(\tilde{x}) = J_t (x,\tau)\times \widehat{J} , \qquad \mbox{where } \widehat{J} := \left( \begin{array}{cc} 0 & 1 \\ -1 & 0 \end{array} \right): T^* \R \rightarrow T^* \R.
\end{equation}
Thus $\tilde{J}_t$, $t \in \T$, is a loop of almost complex structures compatible with $-\tilde{\omega}$. The corresponding metric 
\[
\langle \cdot ,\cdot \rangle_{\tilde{J}_t(\tilde{x})} := \tilde{\omega}_{\tilde{x}} ( \tilde{J}_t (\tilde{x})\cdot, \cdot)
\]
is the product metric of $\langle \cdot,\cdot\rangle_{J_t(x, \tau)}$ with the Euclidean metric of $T^* \R \cong \R^2$. The norms $|\cdot|_{J_t(x, \tau)}$ and $|\cdot|_{\tilde{J}_t(\tilde{x})}$ are the norms which are used whenever we write the $L^p$ norm of sections of pullbacks of $TM$ or $T\tilde{M}$. 

The $L^2$-gradient of $\mathbb{A}_{\tilde{H}}$ has the form
\begin{equation}
\label{nabla}
\nabla \mathbb{A}_{\tilde{H}} (\tilde{x}) = \nabla \mathbb{A}_{\tilde{H}} (x,\tau,\sigma) = \tilde{J}_{\cdot}(\tilde{x}) ( \tilde{x}' - X_{\tilde{H}}(\tilde{x}) ) = \left( \begin{array}{c} J_{\cdot}(x,\tau) (x' - \tau X_H(x)) \\ \sigma' - H(x) \\ -\tau'  \end{array} \right),
\end{equation}
so the Floer negative gradient equation for $\mathbb{A}_{\tilde{H}}$, that is,
\[
\frac{d\tilde{u}}{ds}  + \nabla \mathbb{A}_{\tilde{H}} (\tilde{u})=0, \qquad \mbox{for } \tilde{u} : \R \rightarrow C^{\infty}(\T,\tilde{M}),
\]
or
\begin{equation}
\label{gradA}
\partial_s \tilde{u} + \tilde{J}_t(\tilde{u}) \bigl( \partial_t \tilde{u} -  X_{\tilde{H}}(\tilde{u})\bigr) = 0,
\end{equation}
is the following system of PDEs 
\begin{equation}
\label{floer}
\begin{split}
\partial_s u + J_t(u,\eta) \bigl( \partial_t u - \eta X_H(u)\bigr) &= 0, \\
\partial_s \eta + \partial_t \zeta - H(u) &= 0, \\
\partial_s \zeta - \partial_t \eta &= 0.
\end{split}
\end{equation}
for
\[
\tilde{u} = (u,\eta,\zeta) : \R \times \T \rightarrow M \times T^* \R = \tilde{M}.
\]
This system is invariant with respect to the $\R^*$-action (\ref{Raction}), so if $(u,\eta,\zeta)$ is a solution, so is $(u,\eta,\zeta+\xi)$ for every $\xi\in \R^*$.
As usual, we are interested in finite-energy solutions of the above systems, that is in solutions $\tilde{u} = (u,\eta,\zeta)$ for which the quantity
\begin{equation}
\label{ene1}
\mathbb{E}(\tilde{u}) := \int_{\R \times \T} |\partial_s \tilde{u}|_{\tilde{J}_t}^2 \, ds \, dt = \frac{1}{2} \int_{\R \times \T} \| d\tilde{u} - X_H(u) \otimes dt \|_{\tilde{J}_t}^2\, ds \, dt
\end{equation}
is finite. The norm $\|\cdot\|_{\tilde{J}_t}$ on $T\tilde{M}$-valued differential forms is induced by the norm $|\cdot|_{\tilde{J}_t}$ and by the Euclidean norm on the tangent space of the cylinder. The proof of the last identity uses the fact that $\tilde{u}$ is a solution of the Floer equation. The gradient structure of the equation (\ref{gradA}) implies that the function $s\mapsto \mathbb{A}_{\tilde{H}}(\tilde{u}(s,\cdot))$ is decreasing and that
\[
\begin{split}
\mathbb{E}(\tilde{u}) = - \int_{\R} \frac{d}{ds} \mathbb{A}_{\tilde{H}}(\tilde{u}(s,\cdot))\, ds &= 
\lim_{s\rightarrow -\infty}  \mathbb{A}_{\tilde{H}}(\tilde{u}(s,\cdot)) - \lim_{s\rightarrow +\infty} \mathbb{A}_{\tilde{H}}(\tilde{u}(s,\cdot)) \\ &= \sup_{s\in \R} \mathbb{A}_{\tilde{H}}(\tilde{u}(s,\cdot)) - \inf_{s\in \R} \mathbb{A}_{\tilde{H}}(\tilde{u}(s,\cdot)).
\end{split}
\]

Notice that if we average over $\T$ the functions $\eta:\R\times \T \rightarrow \R$ and $\zeta: \R \times \T \rightarrow \R^*$, we obtain the functions
\[
\hat{\eta}(s) := \int_{\T} \eta(s,t)\, dt \qquad \mbox{and} \qquad \hat{\zeta}(s) := \int_{\T} \zeta(s,t)\, dt,
\]
which solve the ODEs
\[
\hat{\eta}'(s) - \int_{\T} H(s,t)\, dt = 0 \qquad \mbox{and} \qquad \hat{\zeta}'(s) = 0.
\]
The first equation is precisely the equation for the evolution of the Lagrange multiplier in the Rabinowitz Floer equations (see equation (\ref{rfeq}) in the Introduction and Section \ref{rabflodiff} below). The second equation implies that the average $\hat{\zeta}$ of $\zeta$ is constant.

We recall that the symplectic manifold $(M,\omega)$ is said to be convex at infinity if there exists a closed contact $(2n-1)$-dimensional manifold $(\Sigma_{\infty},\alpha_{\infty})$ and an open symplectic embedding
\[
\iota: \bigl( \Sigma_{\infty} \times (0,+\infty), d(r\alpha_{\infty}) \bigr) \hookrightarrow (M,\omega),
\]
such that 
\[
M_0:= M\setminus \iota(\Sigma_{\infty} \times (0,+\infty))
\] 
is compact. Here $r$ denotes the second variable in the product $\Sigma_{\infty} \times (0,+\infty)$. 
In this paper, we will always assume that $(M, \omega)$ is convex at infinity. Up to a shift in $r$ of the symplectic embedding $\imath$, we can also assume that $H$ is constant outside $M_0$ and, in particular, that $\Sigma$ belongs to the interior part of $M_0$.

Moreover, we will always work with families $\mathrm{J}\in \mathcal{J}$ of almost complex structures that are constant and of contact type outside $M_0$. This means that the pullback $\iota^*(J_t(\cdot ,\tau))$ of $J_t(\cdot, \tau)$ to $\Sigma_{\infty} \times (0,+\infty)$ is an almost complex structure $J_{\infty}$ on $\Sigma_{\infty} \times (0,+\infty)$ which is independent of both $t\in \T$ and $\tau \in \R$ and is of contact type, meaning that it satisfies
\begin{equation}
\label{ct}
dr \circ J_{\infty} = r \alpha_{\infty} \qquad \mbox{on } \Sigma_{\infty} \times (0,+\infty).
\end{equation}
We denote by $\mathcal{J}_{\mathrm{con}}$ the subset of $\mathcal{J}$ consisting of those $\mathrm{J}$'s which satisfy this condition. 

The possibility of associating a Floer complex to the Hamiltonian action functional $\mathbb{A}_{\tilde{H}}$ relies on the following a priori bounds: 

\begin{prop}
\label{estimate}
Assume that $M$ is convex at infinity, that the family  $\mathrm{J} $ belongs to $\mathcal{J}_{\mathrm{con}}$, and that $H$ is constant outside $M_0$. Then for any $A\in \R$ there is a number $C=C(A)$, such that for every solution $\tilde{u} = (u,\eta,\zeta)$ of the Floer equation (\ref{floer}) with
\[
|\mathbb{A}_{\tilde{H}}(\tilde{u}(s))| \leq A \qquad \forall s\in \R,
\]
there holds
\[
\|\eta\|_{L^{\infty}(\R\times \T)} \leq C, \qquad \|\zeta-\hat{\zeta}\|_{L^{\infty}(\R\times \T)} \leq C, \qquad u(\R \times \T) \subset M_0,
\]
where $\hat\zeta$ indicates the average of $\zeta$ over $\T$, which as we have seen does not depend on $s$.
\end{prop}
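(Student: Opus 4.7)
The plan is to chain three bounds: first confine $u$ to $M_0$ by a maximum principle, then bound the $t$-average $\hat\eta$ via an action identity and a Cieliebak-Frauenfelder-style fundamental-lemma argument, and finally use elliptic regularity for the Cauchy-Riemann structure of (\ref{floer}) to extract the claimed pointwise estimates. Throughout I exploit the global energy bound $\mathbb{E}(\tilde u)\leq 2A$ produced by $\frac{d}{ds}\mathbb{A}_{\tilde H}(\tilde u(s)) = -\|\partial_s\tilde u\|_{L^2(\T)}^2$ and $|\mathbb{A}_{\tilde H}|\leq A$. For the containment $u(\R\times\T)\subset M_0$, on the symplectic end $\iota(\Sigma_\infty\times(0,+\infty))$ the Hamiltonian $H$ is constant so $X_H\equiv 0$, and the first equation of (\ref{floer}) reduces to $\partial_s u + J_\infty\partial_t u = 0$; the contact-type condition (\ref{ct}) then makes $r\circ u$ subharmonic on the preimage of the end, so the strong maximum principle (with asymptotes lying on $\Sigma\subset M_0$) rules out $u$ ever entering the end. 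With this in hand, $H(u)$ and $dH(u)$ are uniformly bounded by a constant $K=K(H)$, and at the asymptotic critical points $\mathbb{A}_{\tilde H} = \int_\T x_\pm^*\lambda\geq\alpha_0|\tau_\pm|$ by (\ref{alpha0}), so the asymptotic periods satisfy $|\tau_\pm|\leq A/\alpha_0$.

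To bound $\hat\eta(s):=\int_\T\eta(s,t)\,dt$ at intermediate $s$, integration by parts in $t$ combined with the second Floer equation yields the identity
\[
\mathbb{A}_{\tilde H}(\tilde u(s)) = \int_\T u(s,\cdot)^*\lambda - \frac{1}{2}\frac{d}{ds}\|\eta(s,\cdot)\|_{L^2(\T)}^2,
\]
while the first Floer equation rewrites $\int_\T u^*\lambda = \int_\T\eta\,\lambda(X_H(u))\,dt+\int_\T\lambda(J_t(u,\eta)\partial_s u)\,dt$. A fundamental-lemma argument adapted to the $t$-dependence of $\eta$ via the Poincaré inequality $\|\eta-\hat\eta\|_{L^2(\T)}\leq C\|\partial_t\eta\|_{L^2(\T)} = C\|\partial_s\zeta\|_{L^2(\T)}$ then distinguishes two regimes. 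When $u(s,\cdot)$ stays close to $\Sigma$, the positivity $\lambda(X_H)\geq\alpha_0$ combined with the identities above produces a bound on $|\hat\eta(s)|$ in terms of $A$ and $\|\partial_s\tilde u(s,\cdot)\|_{L^2(\T)}$. When $|H(u(s,\cdot))|\geq h_0>0$, the ODE $\hat\eta'(s)=\int_\T H(u)\,dt$ combined with the bound $\|\hat\eta'\|_{L^2(\R)}\leq\sqrt{2A}$ from the energy inequality forces the Lebesgue measure of such $s$-intervals to be finite, so $\hat\eta$ can change only by a bounded amount across them. Patching the two regimes with the asymptotic control $|\tau_\pm|\leq A/\alpha_0$ produces $\|\hat\eta\|_{L^\infty(\R)}\leq C$.

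For the remaining pointwise bounds, first note that $\partial_s\hat\zeta=\int_\T\partial_t\eta\,dt=0$, so $\hat\zeta$ is constant in $s$. A direct calculation from the last two equations of (\ref{floer}) shows that $\psi:=(\eta-\hat\eta)+i(\zeta-\hat\zeta)$, which has zero $t$-mean for each $s$, satisfies the inhomogeneous Cauchy-Riemann-type equation $(\partial_s-i\partial_t)\psi = H(u)-\hat\eta'(s)$, whose right-hand side is $L^\infty$-bounded by $2K$. The Poincaré inequality combined with the energy bound provide uniform $L^2$ control of $\psi$ on each unit-length cylinder $[s-1,s+1]\times\T$, and interior elliptic estimates for this first-order operator then upgrade this to $\|\psi\|_{L^\infty(\R\times\T)}\leq C$. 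Combined with the bound on $\hat\eta$ from the previous paragraph, this gives $\|\eta\|_{L^\infty}\leq C$ and $\|\zeta-\hat\zeta\|_{L^\infty}\leq C$, as required. The main obstacle I expect to encounter is the $\hat\eta$-bound: the $t$-dependence of $\eta$ and its coupling with the $u$-component of the Floer equation force a loop-by-loop refinement of the classical fundamental-lemma argument of \cite{cf09}, where the corresponding Lagrange multiplier depended only on $s$.
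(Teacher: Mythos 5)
Your overall strategy matches the paper's: confine $u$ to $M_0$ by a maximum principle using \eqref{ct}, bound the Lagrange multiplier by a Cieliebak--Frauenfelder-style fundamental-lemma argument, then exploit the Cauchy--Riemann structure of the last two Floer equations to obtain $L^\infty$ bounds via elliptic estimates. The maximum principle step is the same as the paper's, and your choice to estimate $\psi := (\eta-\hat\eta)+i(\zeta-\hat\zeta)$, which has zero $t$-mean, is a tidy variation: the Poincar\'e inequality together with the energy bound give $L^2$ control of $\psi$ on unit cylinders directly, whereas the paper first proves a pointwise-in-$s$ bound $\|\zeta(s,\cdot)-\hat\zeta\|_{L^2(\T)}\le C$ (Lemma~\ref{rho}) and $\|\eta(s,\cdot)\|_{L^2(\T)}\le C$ (Lemma~\ref{eta}) before applying the Calderon--Zygmund estimate to $f=(\zeta-\hat\zeta)+i\eta$.

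However, the $\hat\eta$-bound is where the real work is, and there your two-regime dichotomy has a genuine gap. The cases ``$u(s,\cdot)$ stays close to $\Sigma$'' and ``$|H(u(s,\cdot))|\ge h_0$'' do not cover all $s$: when $t\mapsto H(u(s,t))$ changes sign with large amplitude, $u(s,\cdot)$ is far from $\Sigma$ while $\hat\eta'(s)=\int_\T H(u)\,dt$ can be arbitrarily small, so neither regime applies and your Chebyshev argument on $\hat\eta'$ gives no control. The missing ingredient is the paper's Lemma~\ref{lem1}: if $\|\nabla\mathbb{A}_{\tilde H}(\tilde u(s))\|_{L^2(\T)}$ is small then $|H(u(s,t))|<h$ for \emph{all} $t$, proved by an intermediate-value argument showing that any $t$-excursion of $H\circ u$ of size $h/2$ forces $\|x'-\tau X_H(x)\|_{L^1(\T)}$, and hence the gradient, to be at least of order $h/\|X_H\|_\infty$. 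The correct dichotomy is therefore in terms of the gradient norm, with Chebyshev on the energy controlling the bad set, not in terms of $\hat\eta'$. Moreover your Regime~1 estimate is circular as stated: the identity $\mathbb{A}_{\tilde H}(\tilde u(s)) = \int_\T u^*\lambda - \tfrac{1}{2}\tfrac{d}{ds}\|\eta(s,\cdot)\|^2_{L^2(\T)}$ contains $\int_\T\eta\,\partial_s\eta\,dt$, which is bounded only by $\|\eta(s,\cdot)\|_{L^2}\|\partial_s\eta(s,\cdot)\|_{L^2}$ and thus involves the quantity being bounded; this can be closed only at the good times where $\|\partial_s\tilde u(s,\cdot)\|_{L^2}$ is below a fixed threshold, using Poincar\'e to convert $\|\eta\|_{L^2}$ back into $|\hat\eta|$ plus a small error. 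The paper avoids this self-coupling in $\eta$ by keeping the $\int_\T\sigma\tau'\,dt$ term explicit, bounding it by $\|\sigma-\hat\sigma\|_{L^2}\|\tau'\|_{L^2}$, and feeding in the $\zeta$-bound from Lemma~\ref{rho} as a hypothesis of Lemma~\ref{tau}.
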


This result is proved in Section \ref{completing_first_uniform_result}. Basing on these a priori bounds, the construction of the Floer complex for the autonomous Hamiltonian $\tilde{H}$ is almost standard, the only novelty being the fact that the presence of the $\R^*$-action (\ref{Raction}) causes critical points of $\mathbb{A}_{\tilde{H}}$ and solutions of the Floer equation (\ref{floer}) to come in non-compact $\R^*$-families. These solutions will be counted by modding out this $\R^*$-action. Moreover, the critical set of $\mathbb{A}_{\tilde{H}}$ contains the $2n$-dimensional manifold $\Sigma\times \{0\} \times \R^*$ and its complement is invariant also under the non-trivial $\T$-action given by time translations, so also after modding out the $\R^*$-action critical points remain not isolated. We will deal with this fact by a standard method, namely by considering Floer trajectories with cascades.

\subsection{The Floer differential} 
\label{flodiff}

From now on we assume that the following conditions are fulfilled:
\label{assumptions}
\begin{enumerate}[(i)]
\item The exact symplectic manifold $(M,\omega)$ is convex at infinity. 
\item The Hamiltonian $H\in C^{\infty}(M)$ is constant and positive outside of the compact set $M_0$, and the compact energy level $\Sigma:=H^{-1}(0)$ is non-empty, regular, and of restricted contact type with respect to a global primitive $\lambda$ of $\omega$. 
\item The almost complex structure $\mathrm{J}$  belongs to $\mathcal{J}_{\mathrm{con}}$. 
\item The flow $\phi^t_R$ of the Reeb vector field $R$ of $(\Sigma,\alpha) = (\Sigma,\lambda|_{\Sigma})$ is Morse-Bott.
\end{enumerate}

The last assumption means the following: for each $T>0$ the set $\mathcal{P}_T(R)$ of $T$-periodic points of $R$ is a closed submanifold of $\Sigma$ with
\[
T_{p}\mathcal{P}_T(R)=\ker(d \phi^T_R(p) - I)\ \ \ \mbox{for all }p\in\Sigma,
\]
and the rank of $d\alpha$ is locally constant on $\mathcal{P}_T(R)$. This is equivalent to the fact that the action functional $\mathbb{A}_{\tilde{H}}$ is Morse-Bott, meaning that its critical set $\crit \mathbb{A}_{\tilde{H}}$ is a union of finite-dimensional manifolds, and that at each critical point the kernel of the second differential of $\mathbb{A}_{\tilde{H}}$ coincides with the tangent space of the critical manifold to which the point belongs.

Let $\Lambda^-$ and $\Lambda^+$ be two connected components of $\crit \A_{\tilde{H}}$. By standard arguments, Proposition \ref{estimate} implies that the space of solutions $\tilde{u}=(u,\eta,\zeta)$ of (\ref{floer}) such that 
\[
\tilde{u}(-\infty) := \lim_{s\rightarrow -\infty} \tilde{u}(s,\cdot) \in \Lambda^-, \qquad \mbox{and} \qquad \tilde{u}(+\infty) := \lim_{s\rightarrow +\infty} \tilde{u}(s,\cdot) \in \Lambda^+
\]
is relatively compact in the quotient $C^{\infty}_{\mathrm{loc}}(\R\times \T,\tilde{M})/\R^*$ defined by the action (\ref{Raction}): this means that for every sequence $\tilde{u}_h=(u_h,\eta_h,\zeta_h)$ in this space there is a subsequence $\tilde{u}_{k_h}$ and a sequence $(\sigma_h)\subset \R^*$ such that $(u_{k_h},\eta_{k_h},\zeta_{k_h}+\sigma_h)$ converges to some $(u,\eta,\zeta)$ in $C^{\infty}_{\mathrm{loc}}(\R\times \T,\tilde{M})$. Indeed, by  the action bounds
\[
\A_{\tilde{H}} (\Lambda^+) \leq \A_{\tilde{H}}(\tilde{u}(s)) \leq \A_{\tilde{H}} (\Lambda^-), \qquad \forall s\in \R,
\]
where $\mathbb{A}_{\tilde{H}}(\Lambda)$ denotes the common value of $\A_{\tilde{H}}$ on the connected component $\Lambda$ of $\crit \A_{\tilde{H}}$,
Proposition \ref{estimate} implies that the maps $(u_h,\eta_h,\zeta_h - \hat{\zeta}_h)$ take values in a compact set. Then
the $C^{\infty}_{\mathrm{loc}}$ compactness follows from a standard bubbling-off argument, because $(\tilde{M},\tilde{\omega})$ does not have holomorphic spheres, since $\tilde{\omega}$ is exact, together with an elliptic bootstrap. 

We set 
\[
\mathcal{K} := \crit \A_{\tilde{H}}/\R^*,
\]
where the quotient is given by the free action (\ref{Raction}), and we denote by
\[
\pi_{\mathcal{K}} : \crit \A_{\tilde{H}} \rightarrow \mathcal{K}
\]
the corresponding projection. Then $\mathcal{K}$ is a finite dimensional manifold and is diffeomorphic to the union of $\Sigma$ (the constant loops on $H^{-1}(0)$) and of two copies of $\mathcal{P}_T(R)$ for each $T>0$ (corresponding to positive and negative reparametrization of each closed orbit). 
The Hamiltonian action $\A_{\tilde{H}}$ descends to a locally constant functional on $\mathcal{K}$, which we still denote by $\A_{\tilde{H}}$. We also fix a Morse function 
\[
f: \mathcal{K} \rightarrow \R
\] 
and a Riemannian metric $g$ on $\mathcal{K}$, such that the negative gradient flow $\phi^s_{-\nabla f}$ of $f$ with respect to $g$ is Morse-Smale. We lift $f$ to the Morse-Bott function 
\begin{equation}
\label{tildef}
\tilde{f} := f\circ \pi_{\mathcal{K}} : \crit \A_{\tilde{H}} \rightarrow \R,
\end{equation}
and $g$ to the Riemannian metric $\tilde{g}$ on $\crit \A_{\tilde{H}}$ making $\pi_{\mathcal{K}}$ a Riemannian submersion. The corresponding negative gradient flow on $\crit \A_{\tilde{H}}$ is denoted by $\phi^s_{-\nabla \tilde{f}}$. Its singular set $\crit \tilde{f}$ is the at most countable set of lines $\pi_{\mathcal{K}}^{-1}(\hat{x})$, for $\hat{x}\in \crit f$. The flow $\phi^s_{-\nabla \tilde{f}}$ preserves the $\R^*$-coordinate.

If $\tilde{u}$ is a solution of (\ref{gradA}), so is $\tilde{u}(s_0+\cdot,\cdot)$ for every $s_0\in \R$, and $[\tilde{u}]$ denotes the equivalence class of all such solutions.
Given two distinct connected components $\Lambda^-$ and $\Lambda^+$ of $\crit \A_{\tilde{H}}$, we can define the space of negative gradient flow lines with cascades 
\[
\mathcal{C}(\Lambda^-,\Lambda^+)
\] 
between them as  the set of all tuples $([\tilde{u}_1],\dots,[\tilde{u}_k])$, $k\geq 1$, where each $\tilde{u}_j=(u_j,\eta_j,\zeta_j)$ is a non-stationary finite-energy negative gradient flow line of $\A_{\tilde{H}}$ such that 
\[
\tilde{u}_1(-\infty) \in \Lambda^-, \qquad \tilde{u}_k(+\infty) \in \Lambda^+,
\]
and for each $j=1,\dots,k-1$ there holds
\[
\phi^{s_j}_{-\nabla \tilde{f}} (\tilde{u}_j(+\infty)) = \tilde{u}_{j+1}(-\infty),
\]
for some $s_j\geq 0$. Since the flow $\phi_{-\nabla \tilde{f}}^s$ preserves the $\R^*$-coordinate, the last condition implies that all the constant averages $\hat{\zeta}_j$ of the components $\zeta_j$ coincide. 
Moreover, $\tilde{u}_j(+\infty)$ and $\tilde{u}_{j+1}(-\infty)$ belong to the same connected component  $\Lambda_j$ of $\crit \A_{\tilde{H}}$ and
\[
\A_{\tilde{H}} (\Lambda^-) > \A(\Lambda_1) > \dots > \A_{\tilde{H}}(\Lambda_{k-1}) > \A_{\tilde{H}} (\Lambda^+).
\]
The action (\ref{Raction}) induces a free action of $\R^*$ on $\mathcal{C}(\Lambda^-,\Lambda^+)$. There are natural $\R^*$-equivariant mappings
\[
\begin{split}
\mathrm{ev}_-  &: \mathcal{C}(\Lambda^-,\Lambda^+) \rightarrow \Lambda^-, \; ([\tilde{u}_1],\dots,[\tilde{u}_k]) \mapsto \tilde{u}_1(-\infty), \\ \mathrm{ev}_+ &: \mathcal{C}(\Lambda^-,\Lambda^+) \rightarrow \Lambda^+, \; ([\tilde{u}_1],\dots,[\tilde{u}_k]) \mapsto \tilde{u}_k(+\infty).
\end{split}
\]
If $\hat{x}^-\in \mathcal{K}$ and $\hat{x}^+\in \mathcal{K}$ are critical points of $f$ on $\pi_{\mathcal{K}}(\Lambda^-)$ and $\pi_{\mathcal{K}}(\Lambda^+)$, we can consider the $\R^*$-invariant spaces
\[
\begin{split}
\mathcal{C}(\hat{x}^-,\Lambda^+) &:= \set{w\in \mathcal{C}(\Lambda^-,\Lambda^+)}{\mathrm{ev}_-(w) \in W^u_{-\nabla \tilde{f}}(\pi_{\mathcal{K}}^{-1}(\hat{x}^-))}, \\ 
\mathcal{C}(\Lambda^-,  \hat{x}^+) &:= \set{w\in \mathcal{C}(\Lambda^-,\Lambda^+)}{\mathrm{ev}_+(w) \in W^s_{-\nabla \tilde{f}}(\pi_{\mathcal{K}}^{-1}(\hat{x}^+))}, \\ 
\mathcal{C}(\hat{x}^-,\hat{x}^+) &:= \mathcal{C}(\hat{x}^-,\Lambda^+) \cap \mathcal{C}(\Lambda^-, \hat{x}^+),
\end{split}
\]
where $W^u_{-\nabla \tilde{f}}(\pi_{\mathcal{K}}^{-1}(\hat{x}))$ and $W^s_{-\nabla \tilde{f}}(\pi_{\mathcal{K}}^{-1}(\hat{x}))$ denote the unstable and the stable manifold of the critical manifold $\pi_{\mathcal{K}}^{-1}(\hat{x})$ with respect to the negative gradient flow $\phi^s_{-\nabla \tilde{f}}$ on $\crit \A_{\tilde{f}}$. It is convenient to extend the above definitions to the case in which $\Lambda^-=\Lambda^+=\Lambda$ and $\hat{x}^-\neq \hat{x}^+$ are critical points of $f$ in $\pi_{\mathcal{K}}(\Lambda)$, by setting
\begin{equation}
\label{extension of cascades}
\begin{split}
\mathcal{C}(\hat{x}^-,\Lambda) &:= W^u_{-\nabla \tilde{f}} (\pi_{\mathcal{K}}^{-1}(\hat{x}^-))/\R,   \qquad \mathcal{C}(\Lambda,\hat{x}^+) := W^s_{-\nabla \tilde{f}} (\pi_{\mathcal{K}}^{-1}(\hat{x}^+))/\R, \\ 
&\mathcal{C}( \hat{x}^-,\hat{x}^+) := \bigl( W^u_{-\nabla \tilde{f}} (\pi_{\mathcal{K}}^{-1}(\hat{x}^-)) \cap W^s_{-\nabla \tilde{f}} (\pi_{\mathcal{K}}^{-1}(\hat{x}^+)) \bigr)/\R, 
\end{split}
\end{equation}
where the $\R$-action is the one given by the flow $\phi^s_{-\nabla f}$.

\begin{prop}
\label{cascades_prop}
For a generic choice of the pair $(\mathrm{J},g)$ the sets $\mathcal{C}(\Lambda^-,\Lambda^+)$, $\mathcal{C}(\hat{x}^-,\Lambda^+)$, $\mathcal{C}(\Lambda^-,\hat{x}^+)$, and $ \mathcal{C}(\hat{x}^-,\hat{x}^+)$ are finite dimensional manifolds, for every $\Lambda^-,\Lambda^+$ connected components of $\mathrm{crit}\, \A_H$ and for every $\hat{x}^-, \hat{x}^+$ in $\mathrm{crit}\, f$.
\end{prop}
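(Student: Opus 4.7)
The plan is to realize each of the four moduli spaces as the transverse zero locus of a Fredholm section of a Banach bundle, and then to invoke the Sard--Smale theorem to select a residual set of admissible $(\mathrm{J},g)$ for which all the relevant operators are surjective. The four spaces differ only in the conditions imposed at the asymptotic ends, so the core analytic work concerns the unconstrained space $\mathcal{C}(\Lambda^-,\Lambda^+)$, after which the other three are obtained by transverse intersection with unstable and stable manifolds of $-\nabla\tilde{f}$ on $\crit\mathbb{A}_{\tilde{H}}$.

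First I would set up, for each pair $\Lambda^\pm$ of connected components of $\crit\mathbb{A}_{\tilde{H}}$, a Banach manifold $\mathcal{B}(\Lambda^-,\Lambda^+)$ of maps $\tilde{u}=(u,\eta,\zeta):\R\times\T\rightarrow\tilde{M}$ of class $W^{1,p}$ in exponentially weighted Sobolev spaces, with asymptotic limits free to vary within $\Lambda^\pm$. Over $\mathcal{B}(\Lambda^-,\Lambda^+)\times\mathcal{J}_{\mathrm{con}}$ sits a Banach bundle whose fibre is the corresponding $L^p$-space of sections of $\tilde{u}^*T\tilde{M}$, and the left-hand side of the Floer system (\ref{floer}) defines a smooth section $\mathcal{F}$. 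Proposition \ref{estimate} confines all elements of $\mathcal{F}^{-1}(0)$ to a compact subset of $\tilde{M}$ modulo the $\R^*$-action, and the index computation from Section \ref{indcomptra} shows that the fibrewise linearization $D\mathcal{F}$ is Fredholm once we mod out the $\R$-translations in $s$ and the $\R^*$-translations in $\zeta$.

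The analytic heart of the argument is surjectivity of the total linearization obtained by also varying $\mathrm{J}\in\mathcal{J}_{\mathrm{con}}$. Given a non-trivial $L^q$ cokernel element $(\xi^*,a^*,b^*)$ at a solution $\tilde{u}$ connecting distinct components $\Lambda^-\neq\Lambda^+$, elliptic regularity and unique continuation force it to be smooth and nowhere locally zero. On the other hand, $\tilde{u}$ non-stationary and $\Lambda^-\neq\Lambda^+$ prevent the $M$-component $u$ from being $s$-independent, so there is an open dense set of \emph{regular} points $(s_0,t_0)$ where $\partial_s u(s_0,t_0)\neq 0$, $u(s_0,t_0)$ is disjoint from the periodic orbits of $X_H$ of period $\eta(s_0,t_0)$, and $u$ is injective on a punctured neighbourhood. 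A perturbation $\delta\mathrm{J}$ supported near $(t_0,u(s_0,t_0),\eta(s_0,t_0))\in\T\times M\times\R$ generates arbitrary first-order variations of the $\xi$-component of $\mathcal{F}$ at $(s_0,t_0)$, and the standard duality pairing of Floer--Hofer--Salamon forces $\xi^*(s_0,t_0)=0$. The remaining components $(a^*,b^*)$ satisfy an inhomogeneous Cauchy--Riemann system whose forcing is controlled by $\xi^*$, so they vanish by the same unique-continuation principle.

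Sard--Smale applied to the projection of the resulting universal moduli space onto $\mathcal{J}_{\mathrm{con}}$ now produces a residual set of $\mathrm{J}$'s for which $\mathcal{C}(\Lambda^-,\Lambda^+)$ is a smooth finite dimensional manifold of the expected dimension. A second Baire argument on the Riemannian metric $g$ ensures that $(f,g)$ is Morse--Smale on $\mathcal{K}$ and that the evaluation maps $\mathrm{ev}_\pm$ are transverse to the unstable and stable manifolds of $\pi_{\mathcal{K}}^{-1}(\hat{x}^\pm)$; combined with the previous step, this cuts out the fibre products defining $\mathcal{C}(\hat{x}^-,\Lambda^+)$, $\mathcal{C}(\Lambda^-,\hat{x}^+)$ and $\mathcal{C}(\hat{x}^-,\hat{x}^+)$ as smooth manifolds, while the degenerate case $\Lambda^-=\Lambda^+$ of (\ref{extension of cascades}) reduces to the Morse--Smale statement for $(\tilde{f},\tilde{g})$. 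The main obstacle is the surjectivity step itself: the product constraint (\ref{special form of acs}) confines allowable perturbations $\delta\tilde{J}$ to act only on the $M$-factor, so one must exploit the elliptic coupling between the three equations of (\ref{floer}) and the genericity of regular points of $u$ in order to propagate surjectivity from the $\xi$-equation to the full linearized system.
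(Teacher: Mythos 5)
Your high-level strategy --- universal moduli space, Sard--Smale, then cut out the constrained spaces as transverse fibre products with (un)stable manifolds --- matches the paper's, and your instinct that surjectivity of the extended linearisation is the crux is correct. But the resolution you sketch does not close. Perturbations $\tilde{Y}$ of $\tilde{J}$ act only on the $M$-factor, so they touch only the first equation of (\ref{floer}). Pairing a cokernel element $(w,\rho,\xi)\in L^q_{-\delta}(\R\times\T,\tilde u^*T\tilde M)$ against such perturbations, localised at a regular point, forces $w=0$ on an open set $\Omega$; testing against $\hat u$ supported in $\Omega$ and using that $dH(u)\ne 0$ there then kills $\rho$. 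But the third component $\xi$, dual to the equation $\partial_s\zeta-\partial_t\eta=0$, is then only forced to be \emph{constant} on $\Omega$, not zero, and no perturbation of $\tilde{J}$ and no decaying $(\hat u,\hat\eta,\hat\zeta)\in W^{1,p}_\delta$ can remove this constant, since $\int_{\R\times\T}(\partial_s\hat\zeta-\partial_t\hat\eta)\,ds\,dt=0$ for such test vectors. The constants $(0,0,c)$ are genuine solutions of the formal adjoint equation, and unique continuation cannot eliminate them --- your claim that the remaining dual components vanish \lq\lq by the same unique-continuation principle\rq\rq\ is therefore false, and the argument as written leaves a one-dimensional cokernel.

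The paper's fix is a structural ingredient your proposal does not mention: $\bar\partial$ is regarded as a section of the codimension-one subbundle $\mathcal{E}\subset\tilde{\mathcal{E}}$ cut out by $\int\xi=0$, over the codimension-one Banach submanifold $\mathcal{B}(\Lambda^-,\Lambda^+)\subset\tilde{\mathcal{B}}(\Lambda^-,\Lambda^+)$ of maps whose asymptotic $\sigma$-coordinates agree. Lemma \ref{B_in_B_tilde} guarantees that all zeros of $\bar\partial$ lie in $\mathcal{B}$ and that $\bar\partial(\mathcal{B})\subset\mathcal{E}$, and the dual space $(\mathcal{E}_{\tilde u})^*$ becomes $L^q_{-\delta}$ \emph{modulo} the constant sections $(0,0,c)$ --- which absorbs exactly the obstruction you cannot kill. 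A secondary gap: the correct regularity notion is that of the pair $(u,\eta)$, not of $u$ alone, since $Y$ depends on $(x,\tau)\in M\times\R$; the paper's Theorem \ref{regularpoints} establishes density of $\mathcal{R}(u,\eta)$ in $\{\partial_s u\ne 0\}$, whereas the set of points where $u$ alone is injective in $s$ need not be dense.
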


Here and in the whole article, ``for a generic choice of the pair $(\mathrm{J},g)$'' means for a residual set of $(\mathrm{J},g)$ in the product of $\mathcal{J}_{\mathrm{con}}$ with the space of Riemannian metrics on $\mathcal{K}$. 
Due to the special form we insist our almost complex structures take on the $T^*\R$-factor, the above proposition does not follow directly from the standard transversality statements in Floer theory. Its proof will be discussed in Section \ref{transversality for cylinders}.

We define the $\Z_2$-vector space
\[
F(\tilde{H},f) := \Bigl\{\epsilon\in \Z_2^{\mathrm{crit}\, f} \; \Big| \; \sup_{\substack{\hat{x}\in \mathrm{crit}\, f \\ \epsilon(\hat{x})=1}} \A_{\tilde{H}} (\hat{x}) < +\infty\Bigr\}.
\]
Let $\hat{x}^-\in \mathcal{K}$ and $\hat{x}^+\in \mathcal{K}$ be critical points of $f$. By standard compactness and transversality arguments, the zero-dimensional part of the manifold $\mathcal{C}(\hat{x}^-,\hat{x}^+)/\R^*$ is a finite set, and we denote by $n_{\partial}(\hat{x}^-,\hat{x}^+)\in \Z_2$ its parity. Then we define
\[
\partial:F (\tilde{H},f) \rightarrow F (\tilde{H},f),
\]
to be the homomorphism which maps $\epsilon\in F(\tilde{H},f)$ into the element $\partial \epsilon \in F(\tilde{H},f)$ which is defined by
\begin{equation}
\label{ilbordo}
(\partial \epsilon ) (\hat{x}^+) = \sum_{\hat{x}^- \in \mathrm{crit}\, f} n_{\partial} (\hat{x}^-,\hat{x}^+) \epsilon(\hat{x}^-), \qquad \forall \hat{x}^+\in \crit \, f.
\end{equation}
The fact that $n_{\partial}(\hat{x}^-,\hat{x}^+)$ vanishes when $\A_H(\hat{x}^+)> \A_H(\hat{x}^-)$ implies that the above sum is finite for every $\hat{x}^+$ and that $\partial \epsilon$ belongs to $F (\tilde{H},f)$.

By studying the one-dimensional components of the manifolds $\mathcal{C}(\hat{x}^-,\hat{x}^+)/\R^*$, one proves that $\partial$ is a boundary operator, that is, $\partial\circ\partial=0$. Therefore,
\[
\bigl\{ F(\tilde{H},f), \partial \bigr\}
\]
is a differential $\Z_2$-vector space. 

The homology of the Floer differential $\Z_2$-vector space $\{ F(\tilde{H},f), \partial \}
$ is denoted by
\[
\mathit{FH}(\tilde{H}) := \frac{\ker \partial}{\mathrm{im}\, \partial}.
\]

The Floer homology $FH(\tilde{H})$ is independent on the choice of $\mathrm{J}$, $f$, $g$. Moreover, it depends on $H$ only through its zero-level set $\Sigma$. These assertions follow from the fact that, as we shall prove in Section \ref{isomorphism}, this Floer homology is isomorphic to the Rabinowitz Floer homology of $(\Sigma,M)$. 

A direct proof by the standard continuation argument is also possible, but one would have to face the following difficulty when dealing with the independence from $H$: a homotopy $H_s$ between two Hamiltonians $H_0$ and $H_1$ on $M$ induces a homotopy $\tilde{H}_s$ of Hamiltonians on $\tilde{M}$ whose derivative with respect to $s$ is unbounded. This fact has the following consequence. Let $\chi:\R \rightarrow [0,1]$ be a smooth function constantly equal to 0 on $(-\infty,0]$ and to 1 on $[1,+\infty)$. The energy of a solution of the $s$-dependent Floer equation 
\[
\partial_s \tilde{u} + \tilde{J}_t(\tilde{u}) \bigl( \partial_t \tilde{u} - X_{\tilde{H}_{\chi(s)}} (\tilde{u}) \bigr)=0,
\]
which joins two given periodic orbits $\tilde{x}^-$ and $\tilde{x}^+$ is the quantity
\[
E(\tilde{u}) = \int_{\R \times \T} |\partial_s \tilde{u}|_{\tilde{J}_t}^2\, ds \, dt =  \mathbb{A}_{\tilde{H}_0}(\tilde{x}^-) -\mathbb{A}_{\tilde{H}_1}(\tilde{x}^+) - \int_{\R\times \T} \chi'(s) \tilde{H}_{\chi(s)} (\tilde{u})\, ds\, dt.
\]
Since the function $\chi'(s) \tilde{H}_{\chi(s)}$ is unbounded, it is a priori not clear that these solutions have uniformly bounded energy. This difficulty can be overcome by factorising the homotopy $\tilde{H}_s$ into several homotopies between Hamiltonians which are sufficiently close, as in \cite[Section 1.8]{as06}. 

\subsection{Grading and the Floer chain complex}
\label{grading-sec}

Under the additional assumption that the first Chern class $c_1(TM)$ vanishes on tori, the Floer differential vector space $F(\tilde{H},f)$ has a $\Z$-grading with respect to which the boundary operator $\partial$ has degree $-1$. The definition of this grading its standard, and here we just review it quickly. The precise form of the trivialisations we take will be useful in Section \ref{grading-rfh-sec}.

Since $\Sigma\subset M$ is by assumption a hypersurface
of restricted contract type, there exists a neighborhood $U$ of $\Sigma$
in $M$ and a symplectomorphism
\begin{equation}
\label{the_nbd_U_of_Sigma}
(U,\omega|_{U})\cong\left(\Sigma\times(1-\delta,1+\delta),d(r\alpha)\right),
\end{equation}
where $r$ denotes the second variable in the product $\Sigma\times(1-\delta,1+\delta)$. 

For each free homotopy class $e\in [\T,M]$ which contains loops in $\Sigma$ we choose an element $y_e \colon \T \to \Sigma$ belonging to $e$. 
If $e$ is the trivial free homotopy class then we take $y_e$ to be constant, and we require that $y_{e^{-1}}$ must agree with $y_e$ taken with the opposite orientation. We now choose symplectic trivialisations $ \Phi_e : \T \times \R^{2n} \to (y_e)^*(TM)$ such that
\begin{equation}
\label{triv_formula}
\begin{split}
\Phi_e( \T \times \R^{2n-2} \times \{0\} \times \{0\})  = (y_e)^* ( \ker \alpha),\\
\Phi_e( \T \times \{0\} \times \R \times \{0\})  = y_e^* (\mathrm{span}\, R),\\
\Phi_e( \T \times \{0\} \times \{0\} \times \R)  = y_e^* (\mathrm{span}\, r \partial_r),
\end{split}
\end{equation}
where we are using our identification \eqref{the_nbd_U_of_Sigma} to define the vector field $r \partial_r$. Next, by taking the product by the constant trivialisation of the trivial bundle $\T \times T^* \R\rightarrow \T$, we obtain a symplectic trivialisation  $\tilde{ \Phi}_e : \T \times \R^{2n+2} \to \tilde{y}_e^*(T\tilde M) = y_e^*(TM) \times T^* \R$, where $\tilde{y}_e=(y_e,0,0)$. 
Suppose now that $ \tilde{x} = ( x , \tau, \sigma)$ is a critical point of $ \A_{ \tilde{H}}$. Denote by $e\in [\T,M]$ the free homotopy class of $x$. We now extend the trivialisation $ \tilde{ \Phi}_e$ via parallel transport along a homotopy connecting $\tilde{y}_e$ to $\tilde{x}$, thus defining a symplectic trivialisation of $\tilde{x}^*(T\tilde{M})$. This trivialisation conjugates the differential of the Hamiltonian flow $\phi_{X_{\tilde{H}}}^t$ along $\tilde{x}$ with a path 
\begin{equation}
\label{path_gamma}
\Gamma_{\tilde{x}} : [0,1] \rightarrow \mathrm{Sp}(2n+2).
\end{equation}

We denote by 
\[
\mu_{\mathrm{rs}}(\Gamma_{\tilde{x}})\in \frac{1}{2} \Z
\]
the Robbin-Salamon index of $\Gamma_{\tilde{x}}$ (see \cite{rs95}). Here we use the same sign conventions of \cite{as10}, see in particular \cite[Section 5.1]{as10}. The fact that $c_1(TM)$, and therefore also $c_1(T\tilde M)$, vanish on tori implies that $\mu_{\mathrm{rs}}(\Gamma_{\tilde{x}})$ does not depend on the choice of the homotopy connecting $\tilde{y}_e$ to $\tilde{x}$. Furthermore, $\mu_{\mathrm{rs}}(\Gamma_{\tilde{x}})$ depends only on the connected component $\Lambda$ of $\crit \A_{ \tilde{H}}$ to which $\tilde{x}$ belongs. By
\[
\mu_{\mathrm{rs}}(\Lambda) \in \frac{1}{2} \Z
\]
we denote the common value of $\mu_{\mathrm{rs}}(\Gamma_{\tilde{x}})$ over $\tilde{x}\in \Lambda$. Due to the special form of $\tilde{H}$, this Robbin-Salamon index can be expressed in terms of the index of the reparametrized closed Reeb orbit  $x: \T \rightarrow \Sigma$, where $\tilde{x}=(x,\tau,\sigma)$ (in the case $\tau \neq 0$). This result is due to Bourgeois and Oancea \cite{bo13}, and is discussed in Section \ref{grading-rfh-sec} below.

It is also convenient to define the number
\[
\mu(\Lambda) := \mu_{\mathrm{rs}} (\Lambda) - \frac{1}{2} \dim \Lambda,
\]
which by the properties of the Robbin-Salamon index is an integer. The grading on $F(\tilde{H},f)$ is induced by the integer-valued function
\begin{equation}
\label{mu f x}
\mu_f : \crit f \rightarrow \Z, \qquad \mu_f(\hat{x}) := \mu(\Lambda) + \mathrm{ind}_f (\hat{x}) + 1,
\end{equation}
where $\Lambda$ is the connected component of $\crit \A_{\tilde{H}}$ containing $\pi_{\mathcal{K}}^{-1}(\hat{x})$, and $\mathrm{ind}_f (\hat{x})$ denotes the Morse index of $\hat{x}\in \mathcal{K}$ as a critical point of $f: \mathcal{K} \rightarrow \R$. The additional``$+1$'' added in the formula is added purely for convenenience; as we will see later this will imply that the isomorphism between the Floer homology of $ \tilde{H}$ and the Rabinowtiz Floer homology of $ \Sigma$ is grading preserving. Notice that $\mathrm{ind}_f (\hat{x}) + 1$ is the dimension of the unstable manifold of the line $\pi_{\mathcal{K}}^{-1}(\hat{x})$ with respect to the flow of $-\nabla \tilde{f}$ on $\Lambda$. 

\begin{prop}
\label{dimprop}
For a generic choice of $(\mathrm{J},g)$ the manifolds of Floer trajectories with cascades have dimension  
\begin{equation}
\label{dimensions}
\begin{split}
\dim \mathcal{C}(\Lambda^-,\Lambda^+) &= \mu(\Lambda^-) + \dim \Lambda^- - \mu(\Lambda^+)-1, \\ 
\dim \mathcal{C}(\hat{x}^-,\Lambda^+) &= \mu_f(\hat{x}^-) - \mu(\Lambda^+) - 1,  \\ 
\dim \mathcal{C}(\Lambda^-,\hat{x}^+) &= \mu (\Lambda^-) + \dim \Lambda^- - \mu_f(\hat{x}^+) , \\ 
\dim \mathcal{C}(\hat{x}^-,\hat{x}^+) &= \mu_f(\hat{x}^-) - \mu_f(\hat{x}^+),
\end{split}
\end{equation}
for every connected components $\Lambda^-\neq \Lambda^+$  of $\mathrm{crit}\, \A_{\tilde{H}}$, and for every $\hat{x}^-, \hat{x}^+$ in $\mathrm{crit}\, f$.
\end{prop}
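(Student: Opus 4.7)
The plan is to derive all four dimension formulas from two ingredients: the Fredholm index of the linearised Floer operator on a single cylinder with free asymptotes in Morse--Bott critical manifolds of $\A_{\tilde H}$, and elementary dimension counting for cascades with transverse matching conditions. Generic transversality of the Floer moduli spaces and of the evaluation maps to the stable and unstable manifolds of $-\nabla \tilde f$ is exactly what Proposition~\ref{cascades_prop} produces, so the dimensions computed below are the actual dimensions of the manifolds.

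By the Fredholm index calculation carried out in Section~\ref{indcomptra}, the linearised operator associated to a finite energy solution $\tilde u$ of (\ref{floer}) with $\tilde u(\pm\infty)\in \Lambda^\pm$, acting on sections of $\tilde u^* T\tilde M$ that decay to the tangent spaces of $\Lambda^\pm$, is Fredholm of index
\[
\mathrm{ind}(D_{\tilde u}) = \mu_{\mathrm{rs}}(\Lambda^-) - \mu_{\mathrm{rs}}(\Lambda^+) + \tfrac{1}{2}\bigl(\dim \Lambda^- + \dim \Lambda^+\bigr),
\]
which, using $\mu_{\mathrm{rs}}(\Lambda) = \mu(\Lambda) + \tfrac{1}{2}\dim \Lambda$, rewrites as $\mu(\Lambda^-) + \dim \Lambda^- - \mu(\Lambda^+)$. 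Modding out the free $\R$-action by $s$-translation on non-stationary solutions shaves off an extra $-1$.

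A point of $\mathcal{C}(\Lambda^-,\Lambda^+)$ consisting of $k$ cascades corresponds to the choice of intermediate components $\Lambda^-=\Lambda_0, \Lambda_1, \ldots, \Lambda_k=\Lambda^+$, a tuple $([\tilde u_1],\ldots,[\tilde u_k])$ of $\R$-equivalence classes of non-stationary Floer cylinders between them, and non-negative times $s_1,\ldots,s_{k-1}$ satisfying $\phi^{s_j}_{-\nabla \tilde f}(\tilde u_j(+\infty)) = \tilde u_{j+1}(-\infty)$. Each matching condition is a codimension-$\dim \Lambda_j$ constraint (the evaluation maps being submersive by Proposition~\ref{cascades_prop}) while each flow time contributes one extra dimension, so
\[
\dim \mathcal{C}(\Lambda^-,\Lambda^+) = \sum_{j=1}^k \bigl(\mu(\Lambda_{j-1}) + \dim \Lambda_{j-1} - \mu(\Lambda_j) - 1\bigr) + (k-1) - \sum_{j=1}^{k-1} \dim \Lambda_j.
\]
The intermediate $\mu(\Lambda_j)$ telescope and the intermediate $\dim \Lambda_j$ cancel, yielding $\mu(\Lambda^-)+\dim \Lambda^- - \mu(\Lambda^+) - 1$ independently of $k$; this is the first formula. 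Since $\pi_{\mathcal{K}}$ is an $\R^*$-principal bundle and $\tilde g$ makes it a Riemannian submersion, the $\phi^s_{-\nabla \tilde f}$-unstable manifold of the line $\pi_{\mathcal{K}}^{-1}(\hat x^-) \subset \Lambda^-$ has dimension $\mathrm{ind}_f(\hat x^-)+1$, and the stable manifold of $\pi_{\mathcal K}^{-1}(\hat x^+) \subset \Lambda^+$ has codimension $\mathrm{ind}_f(\hat x^+)$. Generic transversality of $\mathrm{ev}_\pm$ with these two submanifolds, again provided by Proposition~\ref{cascades_prop}, gives
\[
\dim \mathcal{C}(\hat x^-, \Lambda^+) = \dim \mathcal{C}(\Lambda^-,\Lambda^+) - \bigl(\dim \Lambda^- - \mathrm{ind}_f(\hat x^-) - 1\bigr),
\]
\[
\dim \mathcal{C}(\Lambda^-, \hat x^+) = \dim \mathcal{C}(\Lambda^-,\Lambda^+) - \mathrm{ind}_f(\hat x^+),
\]
and subtracting both codimensions yields the analogous formula for $\dim \mathcal{C}(\hat x^-, \hat x^+)$. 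Inserting the first formula and the definition $\mu_f(\hat x) = \mu(\Lambda) + \mathrm{ind}_f(\hat x) + 1$ turns these into the three remaining identities; the extension (\ref{extension of cascades}) when $\Lambda^-=\Lambda^+$ reduces to standard Morse--Smale dimension counting and produces the same answer.

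The main technical input is the Fredholm index formula for the extended Floer operator, which is non-standard because $\tilde H$ is autonomous and the almost complex structure must take the product form (\ref{special form of acs}) in order to be compatible with the $\R^*$-invariance; this is the content of Section~\ref{indcomptra}. Once it is in hand, the proof of Proposition~\ref{dimprop} reduces to the cascade bookkeeping above.
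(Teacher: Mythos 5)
Your proof is correct and follows the same route as the paper: Theorem~\ref{virdim_cylinders} supplies the single-cylinder index, the cascade count reduces to the one-cascade case after quotienting by the $s$-translation $\R$-action, and the codimensions of the evaluation maps into the unstable and stable manifolds of $\pi_{\mathcal{K}}^{-1}(\hat{x}^\pm)$ account for the remaining three formulas. The only difference is that you carry out explicitly the telescoping cascade bookkeeping that the paper defers to \cite[Appendix~A]{cf09} and \cite{bh13}.
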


\begin{proof}
Assume that $\Lambda^-\neq \Lambda^+$ and denote by $\mathcal{M}( \Lambda^- , \Lambda^+)$ the space of gradient flow lines (without cascades) running from $ \Lambda^-$ to $ \Lambda^+$, where we have \emph{not} divided through by either the free translation $\R$-action or the free $\R^*$-action \eqref{Raction}. Theorem \ref{virdim_cylinders} below tells us that the virtual dimension of  $\mathcal{M}( \Lambda^- , \Lambda^+)$ is given by 
\[
\mu ( \Lambda^-)  + \dim \Lambda - \mu( \Lambda^+).
\]
One now argues as in \cite[Appendix A]{cf09} or \cite{bh13} to see that the dimension of the space $ \mathcal{C}( \Lambda^- , \Lambda^+)$ of cascades is given by 
\[
\dim \mathcal{C}(\Lambda^-,\Lambda^+) = \dim \mathcal{M}( \Lambda^- , \Lambda^+) - 1 = \mu(\Lambda^-) + \dim \Lambda^- - \mu(\Lambda^+) -1,
\] 
where the $-1$ comes from the fact that in the space $\mathcal{C}( \Lambda^-, \Lambda^+)$ we have divided out by the translation $\R$-action (but not the $\R^*$-action \eqref{Raction}). Next, if $\hat{x}^- \in \pi_{\mathcal{K}} (\Lambda^-)$ then
\begin{align*}
\dim \mathcal{C}(\hat{x}^-,\Lambda^+) &= \dim \mathcal{C}(\Lambda^-, \Lambda^+) -\codim_{\Lambda^-} W^u_{-\nabla \tilde{f}} (\pi_{\mathcal{K}}^{-1}(\hat{x}^-)) \\ &=
\mu(\Lambda^-) + \dim \Lambda^- - \mu(\Lambda^+) -1 - \dim \Lambda^- + \mathrm{ind}_f (\hat{x})+1\\ &= \mu_f(\hat{x}^-) - \mu(\Lambda^+) - 1.
\end{align*}
Similarly, if $\hat{x}^+ \in \pi_{\mathcal{K}} ( \Lambda^+)$ then
\begin{align*}
\dim \mathcal{C}(\Lambda^-, \hat{x}^+) &= \dim \mathcal{C}(\Lambda^-, \Lambda^+) - \codim_{\Lambda^+} W^s_{-\nabla \tilde{f}} (\pi_{\mathcal{K}}^{-1}(\hat{x}^+)) \\ &= \mu(\Lambda^-) + \dim \Lambda^- - \mu(\Lambda^+) -1 - \dim \Lambda^+ + (\dim  \Lambda^+ - \mathrm{ind}_f (\hat{x}^+) )\\ &= \mu(\Lambda^-) + \dim \Lambda^- - \mu_f(\hat{x}^+).
\end{align*}
Furthermore, if $\hat{x}^{\pm} \in  \pi_{\mathcal{K}} ( \Lambda^{\pm})$ then 
\begin{align*}
\dim \mathcal{C}( \hat{x}^-, \hat{x}^+) &= \dim \mathcal{C}(\hat{x}^-, \Lambda^+) + \dim \mathcal{C}(\Lambda^-, \hat{x}^+) - \dim \mathcal{C}(\Lambda^-,\Lambda^+) \\
 &=  \mu_f(\hat{x}^-) - \mu_f(\hat{x}^+).
\end{align*}
Finally, the last three identities in (\ref{dimensions}) obviously hold when $\Lambda^-=\Lambda^+$ and the spaces which appear there are defined as in (\ref{extension of cascades}).
\end{proof}

The last of the formulas (\ref{dimensions}) implies that 
\[
\dim  \mathcal{C}(\hat{x}^-,\hat{x}^+) /\R^* = \mu_f(\hat{x}^-) - \mu_f(\hat{x}^+) - 1,
\]
so $n_{\partial}(\hat{x}^-,\hat{x}^+)=0$ whenever $\mu_f(\hat{x}^+) \neq  \mu_f(\hat{x}^-) - 1$. Therefore, if we set
\[
F_k(\tilde{H},f) := \set{\epsilon\in F(\tilde{H},f)}{\epsilon(\hat{x}) = 0 \mbox{ if } \mu_f(\hat{x}) \neq k}, \qquad \forall k\in \Z,
\]
we see that the boundary operator $\partial$ maps the subspace $F_k(\tilde{H},f)$ into the subspace $F_{k-1}(\tilde{H},f)$, for every $k\in \Z$. Therefore, 
\[
\{F_k(\tilde{H},f),\partial\}_{k\in \Z}
\]
is a chain complex of $\Z_2$-vector spaces. Its homology is denoted by
\[
FH_k(\tilde{H}) := \frac{\ker \partial|_{F_k(\tilde{H},f)}}{\mathrm{im}\, \partial|_{F_{k+1}(\tilde{H},f)}}, \qquad \forall k\in \Z.
\]
This homology is independent of the choice of $\mathrm{J}$, $f$, $g$, and depends on $H$ only through its zero-level set $\Sigma$.

\section{The isomorphism with Rabinowitz Floer homology}
\label{isomorphism}

In this section we briefly recall K.~Cieliebak's and U.~Frauenfelder's construction of the Rabinowitz Floer complex from \cite{cf09} and we prove that this chain complex is isomorphic to the Floer complex of $\tilde{H}$. The assumptions on the symplectic manifold $(M,\omega)$ and on the Hamiltonian $H\in C^{\infty}(M)$ are still the conditions (i), (ii), (iii) and (iv) in Section \ref{flodiff}.

\subsection{The Rabinowitz Floer differential}
\label{rabflodiff}

The free period action functional - or Rabinowitz action functional -  
\[
\mathcal{A}_{H}:C^{\infty}(\mathbb{T},M)\times\mathbb{R}\rightarrow\mathbb{R}
\]
is defined by 
\[
\mathcal{A}_{H}(x,\tau):=\int_{\mathbb{T}}x^{*}\lambda-\tau\int_{\mathbb{T}}H(x(t))dt.
\]
One readily checks that a pair $(x,\tau)$ is a critical point of
$\mathcal{A}_{H}$ if and only if:
\begin{equation}
\begin{split}
x'  =\tau X_{H}(x),\\ 
\int_{\mathbb{T}}H(x(t))\, dt=0.
\end{split}
\end{equation}
By the first equation, $x$ is either a reparametrized closed orbit of $X_H$ - if $\tau\neq 0$ - or is constant - if $\tau=0$. In both cases $H(x)$ is constant, and by the second equation $H(x)$ is identically zero. We conclude that the critical set of $\mathcal{A}_H$ is
\[
\begin{split} 
\crit \mathcal{A}_H &= \set{(x,\tau)}{\tau \in \R\setminus \{0\}, \; t\mapsto x(t/\tau) \mbox{ is a $|\tau|$-periodic orbit of $X_H$ on $\Sigma$}} \\ &\cup (\Sigma \times \{0\}).
\end{split}
\]
Therefore, $\crit  \mathcal{A}_H$ is naturally identified with the set
\[
\mathcal{K} = \crit \A_{\tilde{H}}/\R^*,
\]
which was introduced in Section \ref{flodiff}. Moreover, the functionals $\mathcal{A}_H$ and $\A_{\tilde{H}}$ coincide on $\mathcal{K}$:
\[
\mathcal{A}_H(\hat{x}) = \int_{\T} x^* \lambda = \A_{\tilde{H}}(\hat{x}) \qquad \forall \hat{x}=(x,\tau)\in \mathcal{K}.
\] 

Fix an element $\mathrm{J} \in \mathcal{J}_{\mathrm{con}}$. The corresponding $L^{2}$-gradient of $\mathcal{A}_{H}$ is given by
\begin{equation}
\label{rfh_gradient}
\nabla\mathcal{A}_{H}(x,\tau)=\left(J_{\cdot}(x,\tau)(x'-\tau X_{H}(x)),-\int_{\mathbb{T}}H(x)dt\right).
\end{equation}
Thus the negative gradient flow equation for $\mathcal{A}_{H}$,
that is,
\[
\frac{dv}{ds}+\nabla\mathcal{A}_{H}(v)=0,\qquad\mathrm{for}\ v=(u,\eta):\mathbb{R}\rightarrow C^{\infty}(\mathbb{T},M)\times\mathbb{R},
\]
is the following system, coupling a PDE with an ODE:
\begin{equation}
\label{eq:RFH equations}
\begin{split}
\partial_{s}u+J_t(u,\eta)(\partial_{t}u-\eta X_{H}(u)) & =0,\\
\eta'-\int_{\mathbb{T}}H(u)dt & =0.
\end{split}
\end{equation}
As noticed in Section \ref{floeq}, the second equation has the same form of the equation for the time-average of the $\R$-component of a solution of the Floer equation for $\tilde{H}$ on $\tilde{M}$.

\begin{rem}
\label{rem:acs_in_rfh}
We emphasise that in contrast to most of the current literature on Rabinowitz Floer homology, we work with almost complex structures $\mathrm{ J} = \{ J_t(\cdot, \tau) \}_{ (t, \tau) \in \T \times \R}$ that depend explicitly on $\tau$. The reason for this choice will become transparent in Section \ref{rfh_trans} below. We were unable to verify that transversality can be achieved in Rabinowitz Floer homology using a standard loop $J_t, t \in \T$ of almost complex structures.
\end{rem}

The energy $\mathbb{E}(v)$ of a flow line $v=(u,\eta)$ is now the quantity
\[
\mathbb{E}(v):=\int_{\mathbb{R}\times\mathbb{T}}\left|\partial_{s}u\right|_{J_{t}(u,\eta)}^{2}\, ds\,dt+\int_{\mathbb{R}}\left|\eta'\right|^{2}ds.
\]
As with the functional $\mathbb{A}_{\tilde{H}}$, the free period action
functional $\mathcal{A}_{H}$ is never Morse. However, it is Morse-Bott
exactly when $\mathbb{A}_{\tilde{H}}$ is. Indeed, both conditions are equivalent to the assumption (iv) of Section \ref{flodiff}. 

Consider a Morse function $f$ and a Riemannian metric $g$ on $\mathcal{K}$, having a Morse-Smale negative gradient flow $\phi^s_{-\nabla f}$.  As before, given two distinct components $K^-$ and $K^+$ of $\mathcal{K}\cong \mathrm{crit}\, \mathcal{A}_H$,
we form the space $\mathcal{C}_{RF}(K^{-},K^{+})$ of gradient flow lines
with cascades. This is defined as the set of all tuples $([v_{1}],\dots,[v_{k}]),\, k\geq1$,
where each pair $v_{j}=(u_{j},\eta_{j})$ is a non-stationary finite-energy
negative gradient flow line of $\mathcal{A}_{H}$, such that
\[
v_{1}(-\infty)\in K^{-},\qquad v_{k}(+\infty)\in K^{+},
\]
and such that for each $j=1,\dots,k-1$ there exists $s_{j}\geq0$
such that
\[
\phi^{s_{j}}_{-\nabla f}(v_{j}(+\infty))=v_{j+1}(-\infty).
\]
As before, there are natural maps $\mathrm{ev}_{\pm}:\mathcal{C}_{RF}(K^{-},K^{+})\rightarrow K^{\pm}$. 

If $\hat{x}^- = (x^-,\tau^-)$ and $\hat{x}^+ = (x^+,\tau^+)$ are elements of $\mathcal{K}$ belonging to $K^-$ and $K^+$ respectively, we consider the spaces
\[
\begin{split}
\mathcal{C}_{RF}(\hat{x}^-,K^+) &:= \set{w\in \mathcal{C}_{RF}(K^-,K^+)}{\mathrm{ev}_-(w) \in W^u_{-\nabla f}(\hat{x}^-)}, \\ \mathcal{C}_{RF}(K^-, \hat{x}^+) &:= \set{w\in \mathcal{C}_{RF}(K^-,K^+)}{\mathrm{ev}_+(w) \in W^s_{-\nabla f}(\hat{x}^+)}, \\ \mathcal{C}_{RF}(\hat{x}^-,\hat{x}^+) &:= \mathcal{C}_{RF}(\hat{x}^-,K^+) \cap \mathcal{C}_{RF}(K^-, \hat{x}^+).
\end{split}
\]
Exactly as in \eqref{extension of cascades}, we extend these spaces to cover the cases when one or both of $\hat{x}^\pm$ belongs to $K^\pm$. For a generic choice of $(\mathrm{J},g)$, these are finite dimensional manifolds. Moreover, they fullfil the standard standard compactness-up-to-breaking property. As remarked above, we will study give a detailed proof that transversality can indeed be achieved for a generic family $(\mathrm{J},g)$ in Section \ref{rfh_trans}.

We define the $\Z_2$-vector space
\[
RF(H,f) := \Bigl\{\epsilon\in \Z_2^{\mathrm{crit}\, f} \; \Big| \; \sup_{\substack{\hat{x}\in \mathrm{crit}\, f \\ \epsilon(\hat{x})=1}} \mathcal{A}_{H} (\hat{x}) < +\infty\Bigr\}.
\]
If $\hat{x}^-\in \mathcal{K}$ and $\hat{x}^+\in \mathcal{K}$ are critical points of $f$, we denote by $n_{\partial}^{RF}(\hat{x}^-,\hat{x}^+)$ the parity of the zero-dimensional part of $\mathcal{C}_{RF}(\hat{x}^-,\hat{x}^+)$, and we define the Rabinowitz Floer differential 
\[
\partial:RF (H,f) \rightarrow RF (H,f),
\]
to be the homomorphism which maps $\epsilon\in RF(H,f)$ into the element $\partial \epsilon \in RF(H,f)$ which is defined by
\begin{equation}
\label{ilbordoRF}
\partial \epsilon (\hat{x}^+) = \sum_{\hat{x}^- \in \mathrm{crit}\, f} n_{\partial}^{RF} (\hat{x}^-,\hat{x}^+)\epsilon(\hat{x}^-), \qquad \forall \hat{x}^+\in \crit \, f.
\end{equation}
Its homology  is independent on the choice of $\mathrm{J}$, $f$, $g$, and depends on $H$ only through its zero-level set $\Sigma$. Therefore, it is denoted by 
\[
RFH(\Sigma,M).
\]
See \cite{cf09} for more details.

\subsection{Grading and the Rabinowitz Floer complex}
\label{grading-rfh-sec}

In this section we assume that $c_1(TM)$ vanishes on tori and recall the definition of the grading of  the Rabinowitz Floer complex. We use the the conventions of \cite{cfo10}, which are more standard than those used initially in \cite{cf09}. The difference is a factor 1/2. Then we compare this grading with the grading of the Floer complex of $\tilde{H}$ which is defined in Section \ref{grading-sec}.

Recall from Section \ref{grading-sec} that for each free homotopy class $e \in [ \T , M]$ containing loops in $\Sigma$ we chose an element $y_e : \T \to  \Sigma$ such that $[y_e] = e$ and a symplectic trivialisation 
\[
\Phi_e : \T \times \R^{2n} \rightarrow y_e^*(TM)
\]
mapping $\T \times \R^{2n-2}\times \{(0,0)\}$ onto $y_e^*(\ker \alpha)$. Here we consider the induced trivialisation
\[
\bar{\Phi}_e : \T \times \R^{2n-2} \rightarrow y_e^*(\ker \alpha).
\]
If $\hat{x} = (x , \tau)$ is a critical point of $\mathcal{A}_H$ such that $[x]=e$ then we can extend the trivialisation $ \bar{ \Phi}_e$ using parallel transport along a homotopy connecting $y_e$ to $x$ to define a symplectic trivialisation of $x^*(\ker \alpha) \to \T$. We denote by 
\[
\mu_{\mathrm{rs}}(\bar{\Gamma}_{\hat{x}})\in \frac{1}{2} \Z
\]
the Robbin-Salamon index (see \cite{rs95}) of the path 
\begin{equation}
\label{path_gamma_bar}
\bar{ \Gamma}_{ \hat{x}} :[0,1] \to \mathrm{Sp}(2n-2)
\end{equation}
which is obtained by conjugating the restriction $d \phi_{X_H}^{ \tau t}(x(0))$ to the invariant symplectic subbundle given by the contact distribution $\ker \alpha$ in this symplectic trivialization.

The fact that $c_1(TM)$ vanishes on tori implies that $\mu_{\mathrm{rs}}(\bar{\Gamma}_{\hat{x}})$ does not depend on the choice of the homotopy connecting $y_e$ to $x$. Furthermore, $\mu_{\mathrm{rs}}(\bar{\Gamma}_{\hat{x}})$ depends only on the connected component $K$ of $\crit \mathcal{A}_{H}$ to which $\hat{x}$ belongs. By
\[
\bar{\mu}_{\mathrm{rs}}(K) \in \frac{1}{2} \Z
\]
we denote the common value of $\mu_{\mathrm{rs}}(\bar{\Gamma}_{\hat{x}})$ over $\hat{x}\in K$. 

We set also
\begin{equation}
\label{muK}
\mu(K) := \bar{\mu}_{\mathrm{rs}}(K) - \frac{1}{2} (\dim K-1),
\end{equation}
which by the properties of the Robbin-Salamon index is an integer. 

In the case $K=\Sigma \times \{0\}$ we set
\begin{equation}
\label{constants}
\mu(\Sigma \times \{0\}) := 1-n.
\end{equation}
The next result clarifies the relationship between this index and the one defined in Section \ref{grading-sec}. It is a particular case of more general index identities which are proved in \cite{bo13}. We include a sketch of the proof for sake of completeness.

\begin{prop}
Let $K$ be a connected component of $\mathcal{K}=\crit \mathbb{A}_{\tilde{H}}/\R^* \cong \crit \mathcal{A}_H$ different from $\Sigma\times \{0\}$ and let $\Lambda:= \pi_{\mathcal{K}}^{-1}(K)$ be the corresponding connected component of $\crit \A_{\tilde{H}}$. Then
\[
\mu_{\mathrm{rs}}(\Lambda) = \bar{\mu}_{\mathrm{rs}}(K).
\]
\end{prop}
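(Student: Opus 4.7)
The plan is to split the linearised flow along $\tilde x$ into a $(2n-2)$-dimensional contact block, which reproduces $\bar\Gamma_{\hat x}$, together with a $4$-dimensional complementary block whose Robbin--Salamon index I show vanishes; additivity of $\mu_{\mathrm{rs}}$ under symplectic direct sum then finishes the proof.

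First, since both $\mu_{\mathrm{rs}}(\Lambda)$ and $\bar\mu_{\mathrm{rs}}(K)$ are invariant under a compactly supported homotopy of $H$ that fixes $\Sigma$ and the Reeb flow on it, I may assume $H=r-1$ on the collar neighbourhood \eqref{the_nbd_U_of_Sigma} of $\Sigma$. Then $X_H=R$ throughout the collar, and the flow $d\phi^{\tau t}_{X_H}$ along $x$ preserves each of the three subbundles $\ker\alpha$, $\mathrm{span}\,R$ and $\mathrm{span}\,r\partial_r$ distinguished in \eqref{triv_formula}: the first because the Reeb flow preserves $\alpha$, the second trivially, and the third because $[R,r\partial_r]=0$ in the symplectisation. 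Differentiating \eqref{theflow} at $\tilde x(0)=(x(0),\tau,\sigma(0))$ and reading the result in the trivialisation $\tilde\Phi_e$, the path $\Gamma_{\tilde x}(t)\in\mathrm{Sp}(2n+2)$ is block diagonal in the $\tilde\omega$-orthogonal splitting $\R^{2n+2}=\R^{2n-2}\oplus\R^{4}$ (the second factor spanned by $(R,r\partial_r,\partial_\tau,\partial_\sigma)$), and the upper block coincides with $\bar\Gamma_{\hat x}(t)$ by construction.

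Second, the off-diagonal contributions coming from the $\partial_{\tau_0}$-derivative of $\phi^{\tau_0 t}_{X_H}$ (which produces $tX_H(x(t))=tR(x(t))$) and from the $\sigma$-slot (where $t\,dH_{x(0)}(\xi_0)=t\,dr(\xi_0)$) combine to give for the lower block the explicit form
\[
M(t) = I+tN, \qquad N = \begin{pmatrix} 0 & 0 & 1 & 0 \\ 0 & 0 & 0 & 0 \\ 0 & 0 & 0 & 0 \\ 0 & 1 & 0 & 0 \end{pmatrix},
\]
in the ordered basis $(R,r\partial_r,\partial_\tau,\partial_\sigma)$, with $N^{2}=0$ and $N\in\mathfrak{sp}(4)$. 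By the direct-sum property of $\mu_{\mathrm{rs}}$ it then remains to check that $\mu_{\mathrm{rs}}(M)=0$.

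Finally, the Robbin--Salamon crossing form of $M$ at $t=0$ is the quadratic form $Q(\xi)=\tilde\omega(\xi,N\xi)=2\xi_{2}\xi_{3}$ on $\R^{4}$, whose signature is zero. The path $M(t)$ nevertheless stays in the Maslov cycle for every $t\in(0,1]$, with constant kernel $\ker N=\mathrm{span}(R,\partial_\sigma)$, so the crossing formula is not directly applicable; instead I perturb $N$ to an $N_\varepsilon\in\mathfrak{sp}(4)$ chosen so that $\exp(tN_\varepsilon)$ has an isolated crossing only at $t=0$ and so that $\exp(N_\varepsilon)$ lies outside the Maslov cycle, observe that for $\varepsilon$ small the crossing form is a small perturbation of $Q$ and hence still has signature zero, and conclude by homotopy invariance that $\mu_{\mathrm{rs}}(M)=\lim_{\varepsilon\to 0}\mu_{\mathrm{rs}}(\exp(tN_\varepsilon))=0$. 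The main obstacle is precisely this last step: the total degeneracy of the unperturbed path means the vanishing of $\mu_{\mathrm{rs}}(M)$ must be extracted by a careful perturbation argument, which is the particular case of the stabilisation identity of Bourgeois--Oancea \cite{bo13} identifying the Robbin--Salamon index on the $T^*\R$-enhanced phase space with the contact Conley--Zehnder index of the underlying Reeb orbit.
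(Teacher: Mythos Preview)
Your decomposition into a $(2n-2)$-dimensional contact block plus a $4$-dimensional block, followed by additivity of $\mu_{\mathrm{rs}}$, is exactly the paper's strategy, and your computation of the lower block $M(t)=I+tN$ and of the crossing form $Q(\xi)=2\xi_2\xi_3$ is correct. The gap is in the final step. You write that after perturbing $N$ to $N_\varepsilon$ with a regular crossing only at $t=0$, ``for $\varepsilon$ small the crossing form is a small perturbation of $Q$ and hence still has signature zero''. This inference is false: $Q$ is degenerate (rank $2$ on $\R^4$), and the signature of a degenerate quadratic form is \emph{not} stable under small nondegenerate perturbations. A perturbation of $Q$ making it nondegenerate can have signature $-2$, $0$, or $+2$, depending on how the two-dimensional kernel is filled in; consequently the contribution $\tfrac12\,\mathrm{sgn}\,Q_\varepsilon$ at $t=0$ is undetermined by your argument. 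There is also a second issue: Robbin--Salamon's index is a homotopy invariant only for homotopies with \emph{fixed endpoints}, whereas your perturbation moves the endpoint $M(1)=I+N$ (which lies in the Maslov cycle) to $\exp(N_\varepsilon)$ (which does not), so the ``conclude by homotopy invariance'' step also needs justification.

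The paper circumvents both difficulties by normalising $H$ differently: instead of $H=r-1$ it takes $H=h(r)$ with $h''(1)\neq 0$. The lower block then becomes
\[
\Theta(t)=\begin{pmatrix} 1 & \tau h''(1)t & h'(1)t & 0\\ 0 & 1 & 0 & 0\\ 0 & 0 & 1 & 0\\ 0 & h'(1)t & 0 & 1 \end{pmatrix},
\]
and the paper invokes \cite[Proposition~6]{bo13}, which computes $\mu_{\mathrm{rs}}(\Theta)=-\tfrac12\,\mathrm{sgn}\begin{pmatrix}\tau h''(1) & h'(1)\\ h'(1) & 0\end{pmatrix}=0$ directly, handling the residual degeneracy internally. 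In effect the choice $h''(1)\neq 0$ \emph{is} the correct perturbation, performed at the level of the Hamiltonian rather than of the linearised path; you are right that the vanishing of $\mu_{\mathrm{rs}}(M)$ is the Bourgeois--Oancea stabilisation identity, but your route to it does not supply a proof.
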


\begin{proof}
First note that by differentiating \eqref{theflow}, we see that the linearized flow at the periodic orbit $\tilde{x}(t)=(x(t),\tau,\sigma)$ is given by
\begin{equation}
\label{linearised_flow}
d\phi_{X_{\tilde{H}}}^{t}(x(0),\tau,\sigma)=\left(\begin{array}{ccc}
d\phi_{X_H}^{\tau t}(x(0)) & tX_{H}(x(t)) & 0\\
0 & 1 & 0\\
tdH(x(0)) & 0 & 1
\end{array}\right).
\end{equation}
Recall from \eqref{the_nbd_U_of_Sigma} that there exists a neighborhood $U$ of $\Sigma$
in $M$ and a symplectomorphism
\[
(U,\omega|_{U})\cong\left(\Sigma\times(1-\delta,1+\delta),d(r\alpha)\right),
\]
where $r$ denotes the second variable in the product $\Sigma\times(1-\delta,1+\delta)$. Let $C \subset M$ denote the submanifold 
\[
C := \left\{ x(\T) \mid (x , \tau) \in K\right\}.
\]
There exists a homotopy $H^{s},s\in[0,1]$ of $H$ with the following two
properties:
\begin{enumerate}
\item $H^{0}=H$,  $X_{H^{s}}|_{\Sigma}=X_{H}|_{\Sigma}$ and $\mathcal{A}_{H^s}$ is Morse-Bott
for each $s$,
\item \label{function_h} In a neighborhood $V\subset U$ of $C$ we can write $H^{1}(p,r)=h(r)$
for $(p,r)\in V$, where $h$ is a smooth function satisfying $h(1)=0$,
and $h'(1) >0$ and $h''(1) \ne  0$. 
\end{enumerate}
See \cite[p287]{cf09}. It suffices to prove the result with $H$ replaced by $H^1$. In order to simplify the notation, for the remainder of the proof we write simply $H$ again for $H^1$. Thus from \eqref{theflow}, the flow $\phi_{X_{\tilde{H}}}^t$ of $X_{\tilde{H}}$ on $V$ is given by 
\[
\phi_{X_{\tilde{H}}}^t(p,r,\tau,\sigma)=\left(\phi_R^{\tau h'(r)t}(p),r,\tau,\sigma+th(r)\right),
\]
where $\phi_R^t : \Sigma \to \Sigma$ denotes the Reeb flow of $ \alpha$. We now compare the paths $ \Gamma_{ \tilde{x}} : [0,1] \to \mathrm{Sp}(2n+2)$ from \eqref{path_gamma} and $ \bar{ \Gamma}_{ \hat{x}} :[ 0,1] \to \mathrm{Sp}(2n-2)$ from \eqref{path_gamma_bar}. It follows from \eqref{linearised_flow} that 
\begin{equation}
\label{the symplectic matrices}
\Gamma_{ \tilde{x}}(t)=\left(\begin{array}{cc}
\bar{ \Gamma}_{ \hat{x}}(t) & 0\\
 0 & \Theta(t)
\end{array}\right),
\end{equation}
where $ \Theta :[0,1] \to \mathrm{Sp}(4)$ is given by
\[
\Theta(t)=\left(\begin{array}{cccc}
1 & \tau h''(1)t & h'(1)t & 0\\
0 & 1 & 0 & 0\\
0 & 0 & 1 & 0\\
0 & h'(1)t & 0 & 1
\end{array}\right).
\]
Now we make use of \cite[Proposition 6]{bo13}, which tells us that 
\begin{equation}
\label{index_is_zero}
\mu_{\mathrm{rs}}( \Theta) = - \frac{1}{2} \mathrm{sgn} \left(\begin{array}{cc}
\tau h''(1) & h'(1)\\
h'(1) & 0
\end{array}\right)=0,
\end{equation}
Since the Robbin-Salamon index of a block diagonal symplectic path is the sum of the Robbin-Salamon indices of the blocks, we conclude the proof with
\[
\mu_{\mathrm{rs}}( \Lambda) = \mu_{\mathrm{rs}}( \Gamma_{ \tilde{x}}) = \mu_{\mathrm{rs}}( \bar{\Gamma}_{ \hat{x}} ) + \mu_{\mathrm{rs}}( \Theta)=\mu_{\mathrm{rs}}( \bar{ \Gamma}_{ \hat{x}} ) = \bar{\mu}_{\mathrm{rs}}(K).
\]
\end{proof}

Let $K$ be a connected component of $\mathcal{K}=\crit \mathbb{A}_{\tilde{H}}/\R^* \cong \crit \mathcal{A}_H$ different from $\Sigma\times \{0\}$.
If $\hat{x}\in K$ is a critical point of the Morse function $f$, we define
\[
\mu_f^{RF}(\hat{x}) := \mu(K) + \mathrm{ind}_f(\hat{x}).
\]
Since $\dim \Lambda=\dim K +1$, we deduce that
\begin{equation}
\label{rel-indices}
\mu(\Lambda)= \mu(K)-1 \qquad \mbox{and} \qquad \mu_f(\hat{x}) = \mu_f^{RF}(\hat{x}). 
\end{equation}

Notice that the above identity holds also in the case $\Lambda = \Sigma \times \{0\} \times \R^*$. Indeed, in this case
\[
\mu(\Lambda) = \mu_{\mathrm{rs}}(\Lambda) - \frac{1}{2} \dim \Lambda = 0 - n = 1- n - 1 = \mu(K)-1,
\]
because of (\ref{constants}).

Here is the version of Proposition \ref{dimprop} in the Rabinowitz Floer setting. This result is proved in \cite[Appendix A]{cf09}) (see also \cite[Appendix A]{fra04} for the analogous result in finite-dimensional Morse-Bott theory).

\begin{prop}
\label{dimprop-rfh}
For a generic choice of $(\mathrm{J}, g)$ the manifolds of Rabinowitz Floer trajectories with cascades have dimension   
\begin{equation}
\label{dimensions-rfh}
\begin{split}
\dim \mathcal{C}_{RF}(K^-,K^+) &= \mu(K^-) + \dim K^- - \mu(K^+) -1, \\ 
\dim \mathcal{C}_{RF}(\hat{x}^-,K^+) &= \mu_f^{RF}(\hat{x}^-) - \mu(K^+)-1,  \\ 
\dim \mathcal{C}_{RF}(K^-,\hat{x}^+) &= \mu (K^-) + \dim K^- - \mu_f^{RF}(\hat{x}^+) -1, \\ 
\dim \mathcal{C}_{RF}(\hat{x}^-,\hat{x}^+) &= \mu_f^{RF}(\hat{x}^-) - \mu_f^{RF}(\hat{x}^+)-1,
\end{split}
\end{equation}
for every $K^-,K^+$ connected components of $\mathrm{crit}\, \mathcal{A}_H$ and for every $\hat{x}^-, \hat{x}^+$ in $\mathrm{crit}\, f$.
\end{prop}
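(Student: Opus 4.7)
The plan is to proceed exactly as in the proof of Proposition \ref{dimprop}, adapting the Morse--Bott cascade counting to the critical manifold $\mathcal{K}\cong \crit \mathcal{A}_H$ directly rather than via $\crit \mathbb{A}_{\tilde{H}}$. First I would introduce the space $\mathcal{M}_{RF}(K^-,K^+)$ of parametrized finite-energy negative gradient flow lines $v=(u,\eta)$ of $\mathcal{A}_H$ with asymptotics $v(\pm\infty)\in K^{\pm}$, without yet quotienting by the translation $\R$-action, and invoke the Fredholm index computation of \cite[Appendix A]{cf09} for the linearisation of the coupled PDE--ODE system (\ref{eq:RFH equations}) to see that the virtual dimension of $\mathcal{M}_{RF}(K^-,K^+)$ equals $\mu(K^-)+\dim K^- -\mu(K^+)$. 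Invoking the transversality statement to be proved in Section \ref{rfh_trans}, for a generic pair $(\mathrm{J},g)$ this is the actual dimension of the smooth manifold $\mathcal{M}_{RF}(K^-,K^+)$.

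The standard Morse--Bott cascade formalism (see \cite[Appendix A]{cf09} and \cite[Appendix A]{fra04}) then immediately yields
\[
\dim \mathcal{C}_{RF}(K^-,K^+) = \mu(K^-)+\dim K^- -\mu(K^+)-1,
\]
the $-1$ coming from quotienting by the translation $\R$-action on each cascade component; the intermediate sojourn-time parameters $s_j\geq 0$ precisely cancel the matching-condition codimensions at the intermediate critical manifolds. Observe that this is one unit smaller than $\dim \mathcal{C}(\Lambda^-,\Lambda^+)$ from Proposition \ref{dimprop}, exactly as forced by the identities $\mu(\Lambda)=\mu(K)-1$ and $\dim\Lambda = \dim K + 1$ recorded in (\ref{rel-indices}), together with the absence of an $\R^*$-quotient on the Rabinowitz side; this consistency check suggests that the same sign and shift conventions are being used throughout.

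The remaining three formulas follow from this one by routine codimension arithmetic, exactly mirroring the second half of the proof of Proposition \ref{dimprop}. Imposing the evaluation constraint $\mathrm{ev}_-(w)\in W^u_{-\nabla f}(\hat{x}^-)$ subtracts $\codim_{K^-} W^u_{-\nabla f}(\hat{x}^-) = \dim K^- - \mathrm{ind}_f(\hat{x}^-)$ from the dimension of $\mathcal{C}_{RF}(K^-,K^+)$, yielding the stated formula for $\mathcal{C}_{RF}(\hat{x}^-,K^+)$; symmetrically one obtains the formula for $\mathcal{C}_{RF}(K^-,\hat{x}^+)$; and imposing both evaluation constraints simultaneously together with the inclusion--exclusion identity
\[
\dim \mathcal{C}_{RF}(\hat{x}^-,\hat{x}^+) = \dim \mathcal{C}_{RF}(\hat{x}^-,K^+) + \dim \mathcal{C}_{RF}(K^-,\hat{x}^+) - \dim \mathcal{C}_{RF}(K^-,K^+)
\]
produces the formula for $\mathcal{C}_{RF}(\hat{x}^-,\hat{x}^+)$. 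The case $K^-=K^+$ with $\hat{x}^-\neq \hat{x}^+$ is handled by the evident analogue of (\ref{extension of cascades}).

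The main obstacle is transversality: since we insist on $\tau$-dependent almost complex structures $\mathrm{J}\in \mathcal{J}_{\mathrm{con}}$, the standard transversality proofs for Rabinowitz Floer homology do not apply verbatim, and one must revisit the argument, as already remarked and as will be carried out in Section \ref{rfh_trans}. Once transversality is in hand, both the Fredholm index computation and the Morse--Bott cascade dimension formalism are entirely standard, so the proof reduces to bookkeeping.
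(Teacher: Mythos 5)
Your proposal is correct and matches the paper's treatment: the paper simply cites \cite[Appendix A]{cf09} and \cite[Appendix A]{fra04} for this proposition (describing it as the Rabinowitz Floer analogue of Proposition \ref{dimprop}), and your sketch reconstructs exactly the index formula $\dim \mathcal{M}_{RF}(K^-,K^+)=\mu(K^-)+\dim K^- -\mu(K^+)$, the $-1$ for the translation quotient, and the codimension arithmetic for the evaluation constraints, all of which agree with the paper's own proof of Proposition \ref{dimprop}. Your consistency check via $\mu(\Lambda)=\mu(K)-1$, $\dim\Lambda=\dim K+1$ and the absence of an $\R^*$-quotient on the Rabinowitz side is also correct.
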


The last of the formulas (\ref{dimensions-rfh}) implies that  $n_{\partial}^{RF}(\hat{x}^-,\hat{x}^+)=0$ whenever $\mu_f^{RF}(\hat{x}^+) \neq  \mu_f^{RF}(\hat{x}^-) - 1$. Therefore, if we set
\[
RF_k(\tilde{H},f) := \set{\epsilon\in RF(H,f)}{\epsilon(\hat{x}) = 0 \mbox{ if } \mu_f^{RF}(\hat{x}) \neq k}, \qquad \forall k\in \Z,
\]
we see that the boundary operator $\partial$ maps the subspace $RF_k(H,f)$ into the subspace $RF_{k-1}(H,f)$, for every $k\in \Z$. Therefore, 
\[
\{RF_k(H,f),\partial\}_{k\in \Z}
\]
is a chain complex of $\Z_2$-vector spaces. Its homology is independent on the choice of $\mathrm{J}$, $f$, $g$, and depends on $H$ only through its zero-level set $\Sigma$. It is denoted by
\[
\{RFH_k(\Sigma,M)\}_{k\in \Z}. 
\]

\subsection{The isomorphism}
\label{theisomorphismsection}
The aim of this section is to construct an isomorphism of differential $\Z_2$-vector spaces
\begin{equation}
\Phi : \mathit{RF}(H,f) \to F(\tilde{H},f). 
\end{equation}
As we have seen, the spaces $RF(H,f)$ and $F(\tilde{H},f)$ are canonically identified (and by (\ref{rel-indices}) this identification preserves the grading, when the grading is defined). However, the two differentials are defined by counting solutions of different problems, and there is no reason why the natural identification should commute with the differentials. As it now standard in these sort of questions, the correct definition of the isomorphism $\Phi$ involves counting solutions of a hybrid problems, which we now define.

Consider a pair $(v,\tilde{u})$, where 
\begin{equation}
\label{pair}
\begin{split}
v=(u^{-},\eta^{-}):(-\infty,0] &\rightarrow C^{\infty}(\mathbb{T},M)\times\mathbb{R},
\\
\tilde{u}=(u^{+},\eta^{+},\zeta^{+}):[0,+\infty) &\rightarrow C^{\infty}(\mathbb{T},\tilde{M}),
\end{split}
\end{equation}
are negative gradient flow lines of $\mathcal{A}_{H}$ and $\mathbb{A}_{\tilde{H}}$,
respectively, which satisfy the following coupling conditions:
\begin{eqnarray}
u^{-}(0,t)&=u^{+}(0,t),\qquad &\forall t\in\mathbb{T},\label{eq:loop coupling}
\\
\eta^{-}(0)&=\eta^{+}(0,t)\qquad &\forall t\in\mathbb{T}.\label{eq:eta coupling}
\end{eqnarray}
These coupling conditions imply that
\[
\begin{split}
\mathbb{A}_{\tilde{H}}(\tilde{u}(0,\cdot)) & =\int_{\mathbb{T}}u^{+}(0,\cdot)^{*}\lambda-\int_{\mathbb{T}}\zeta^{+}(0,t)\partial_{t}\eta^{+}(0,t)\, dt -\int_{\mathbb{T}}\eta^{+}(0,t)H(u^{+}(0,t))\, dt\\
 & =\int_{\mathbb{T}}u^{-}(0,\cdot)^{*}\lambda-\eta^{-}(0)\int_{\mathbb{T}}H(u^{-}(0,t))\, dt=\mathcal{A}_{H}(v(0,\cdot)),
\end{split}
\]
since $\partial_{t}\eta^{+}(0,\cdot)\equiv0$. Therefore such a pair $(v,\tilde{u})$ satisfies the action estimates
\begin{equation}
\label{ae}
\mathcal{A}_{H}(v(-s,\cdot))\geq\mathcal{A}_{H}(v(0,\cdot))=\mathbb{A}_{\tilde{H}}(\tilde{u}(0,\cdot))\geq\mathbb{A}_{\tilde{H}}(\tilde{u}(s,\cdot)),
\end{equation}
for every $s\geq 0$, and the sharp energy identity
\begin{equation}
\label{cin}
\begin{split}
\mathbb{E}(v) + \mathbb{E}(\tilde{u}) &= \lim_{s\rightarrow -\infty} \mathcal{A}_H(v(s)) - \lim_{s\rightarrow +\infty} \A_{\tilde{H}}(\tilde{u}(s)) \\
&= \sup_{s\in (-\infty,0]} \mathcal{A}_H(v(s)) - \inf_{s\in [0, +\infty)} \A_{\tilde{H}}(\tilde{u}(s)).
\end{split}
\end{equation}
These action estimates and energy identity are the starting point of the following $L^{\infty}$ bound, which is proved in Section \ref{puehp} below:

\begin{prop}
\label{bounds-hybrid}
For every $A\in \R$ there is a number $C=C(A)$ such that for every pair $(v,\tilde{u})$ as in \eqref{pair} of negative gradient flow lines
of $\mathcal{A}_{H}$ and $\mathbb{A}_{\tilde{H}}$, respectively, which satisfies the coupling conditions (\ref{eq:loop coupling}) and (\ref{eq:eta coupling}) and the action bounds
\[
\mathcal{A}_H(v(s)) \leq A \quad \forall s\leq 0, \qquad \A_{\tilde{H}}(\tilde{u}(s)) \geq -A \quad \forall s\geq 0,
\]
there holds
\[
\begin{split}
\|\eta^-\|_{L^{\infty}((-\infty,0])} &\leq C, \qquad \|\eta^+\|_{L^{\infty}([0,+\infty)\times \T)}\leq C, \qquad \|\zeta^+-\hat{\zeta}^+\|_{L^{\infty}([0,+\infty)\times \T)} \leq C, \\
&u^-((-\infty,0]\times \T) \subset M_0, \qquad u^+([0,+\infty)\times \T) \subset M_0,
\end{split}
\]
where $\hat{\zeta}^+$ indicates the average of $\zeta^+$ over $\T$, which as we know does not depend on $s\geq 0$.
\end{prop}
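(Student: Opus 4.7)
The plan is to reduce Proposition \ref{bounds-hybrid} to the two separate $L^{\infty}$ bounds already available in the literature: for $\tilde{u}$ on $[0,+\infty)$ we aim to re-run the argument proving Proposition \ref{estimate} (promised in Section \ref{completing_first_uniform_result}), and for $v$ on $(-\infty,0]$ the corresponding Rabinowitz estimates of \cite[Section 2]{cf09}. The coupling conditions (\ref{eq:loop coupling})--(\ref{eq:eta coupling}) and the monotonicity of the two action functionals serve as a bridge between the two halves.

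First I would upgrade the one-sided action hypotheses to two-sided bounds. From the calculation preceding (\ref{ae}) we have $\mathcal{A}_{H}(v(0,\cdot)) = \mathbb{A}_{\tilde{H}}(\tilde{u}(0,\cdot))$; combining this identity with the negative-gradient monotonicity of both functionals and with the assumed one-sided bounds gives
\[
-A \leq \mathbb{A}_{\tilde{H}}(\tilde{u}(s,\cdot)) \leq A \quad \forall\, s \geq 0, \qquad -A \leq \mathcal{A}_H(v(s,\cdot)) \leq A \quad \forall\, s \leq 0,
\]
and in particular the sharp energy identity (\ref{cin}) yields $\mathbb{E}(v) + \mathbb{E}(\tilde{u}) \leq 2A$. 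This is the starting point that the two individual bound-arguments require as an input.

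Next I would establish the confinement $u^{\pm}(\R_{\pm}\times \T) \subset M_0$ via an integrated maximum principle applied to $r \circ u^{\pm}$ in the symplectization end of $M$, exactly as in the two source arguments: since $\mathrm{J}\in \mathcal{J}_{\mathrm{con}}$ and $H$ is constant outside $M_0$, the functions $r \circ u^{\pm}$ are subharmonic wherever they exceed the value of $r$ defining $\partial M_0$. The coupling condition (\ref{eq:loop coupling}) glues $u^-$ and $u^+$ continuously at $s=0$, so the two half-cylinders can be treated as a single cylinder and the subharmonic maximum is forced to be attained at the asymptotic critical limits, which belong to $M_0$. Once $u^{\pm}$ is confined, the $L^{\infty}$ bound on $\eta^-$ follows from the Fundamental Lemma of \cite{cf09}: the positivity estimate (\ref{alpha0}) together with the two-sided action bound from the first step controls $|\eta^-|$ in terms of the action and the local energy. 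The bound on $\eta^+$ is then obtained by the same scheme on the half-cylinder $[0,+\infty)$, exactly as in the proof of Proposition \ref{estimate}; the boundary value $\eta^+(0,t)\equiv \eta^-(0)$ imposed by (\ref{eq:eta coupling}) is already controlled by the previous bound and supplies the required control at the finite end.

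Finally, the bound on $\zeta^+ - \hat{\zeta}^+$ is extracted from the last two equations in (\ref{floer}): subtracting the $t$-average, the pair $(\eta^+ - \hat{\eta}^+,\zeta^+ - \hat{\zeta}^+)$ is annihilated by a Cauchy--Riemann-type operator with inhomogeneity bounded in terms of $H\circ u$ and $\partial_t \eta^+$, both of which are already controlled; elliptic regularity together with the invariance (\ref{Raction}) then closes the argument. The principal obstacle I expect is the propagation of these estimates across the interface $s=0$: the coupling is only $C^0$ in $u$ and forces $\eta^+(0,\cdot)$ to be $t$-independent but does not control its $s$-derivative, so the usual cylinder-based Gromov compactness scheme must be adapted to accommodate this partial free-boundary condition without losing the pointwise bounds.
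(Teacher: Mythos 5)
Your overall strategy matches the paper's: upgrade the one-sided action hypotheses to two-sided bounds via the action equality at $s=0$ and monotonicity, use the sharp energy identity for a uniform energy bound, and then reduce to the estimates already developed for each side separately. However, your confinement argument for $u^{\pm}(\R^{\pm}\times\T)\subset M_0$ has a genuine gap. You claim that the coupling condition lets you "treat the two half-cylinders as a single cylinder" and that the subharmonic maximum is then "forced to be attained at the asymptotic critical limits." This does not follow: the glued map $w(s,t)$ formed from $u^-$ and $u^+(-\cdot,\cdot)$ is only $C^0$ across $\{0\}\times\T$, and the function $r\circ w$ obtained by pasting $r^-$ and $r^+$ is \emph{not} automatically subharmonic on a neighbourhood of the interface (consider $r^-(s,t)=s$, $r^+(s,t)=-s$: both are harmonic and agree at $s=0$, but the glued function $-|s|$ has an interior strict maximum along $\{0\}\times\T$). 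So you cannot invoke the maximum principle for the union without further input, and an interior maximum on the interface is precisely the case that must be ruled out. The paper's proof excludes it via the Hopf lemma applied to $r^-$ and $r^+$ on their respective half-cylinders at a putative interface maximizer $(0,t_0)$, giving $\partial_s r^-(0,t_0)>0$ and $\partial_s r^+(0,t_0)<0$; these are then contradicted using the pointwise identity $\partial_s r^{\pm}=-\lambda(\partial_t u^{\pm})$ (valid because $\mathrm{J}$ is of contact type and $X_H=0$ outside $M_0$) together with the $C^0$ matching $u^-(0,\cdot)=u^+(0,\cdot)$. Your proposal never produces this contradiction and explicitly flags the propagation across $s=0$ as an unresolved obstacle.

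A smaller inaccuracy: for the bound on $\zeta^+-\hat{\zeta}^+$ (and on $\eta^+$ near $s=0$), the inhomogeneity in the $\overline{\partial}$ equation for the complex function built from the $(\eta^+,\zeta^+)$-components is $iH(u^+)$ (or its $t$-mean-free part, depending on the normalisation chosen), which is bounded a priori by $\|H\|_{L^{\infty}}$; it does not involve $\partial_t\eta^+$. Had it done so, as you suggest, the estimate would have been circular, since $\partial_t\eta^+$ is controlled only \emph{after} the $W^{1,p}$ bound is in place. The essential point for the near-boundary estimate is that the coupling condition \eqref{eq:eta coupling} turns $\eta^+(0,\cdot)-\eta^-(0)$ into the vanishing imaginary part of a $\overline{\partial}$-problem on the half-cylinder, for which a Calderon--Zygmund estimate with real boundary data applies; the interior estimate on $[1,+\infty)\times\T$ comes from Lemma \ref{ubetarho} and Sobolev embedding.
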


Fix a component $K\subset\mathrm{crit}\,\mathcal{A}_{H}$ and a component
$\Lambda\subset\mathrm{crit}\,\mathbb{A}_{\tilde{H}}$, and define
\[
\mathcal{M}_{\Phi}(K;\Lambda)
\]
to be the set of all pairs $(v,\tilde{u})$ of negative gradient flow lines of $\mathcal{A}_H$ and $\A_{\tilde{H}}$, respectively, which satisfy the coupling conditions (\ref{eq:loop coupling}), (\ref{eq:eta coupling}) and  the asymptotic conditions
\[
v(-\infty)\in K,\qquad\tilde{u}(+\infty)\in\Lambda.
\]
There is a free action of $\R^*$ on $\mathcal{M}_{\Phi}(K;\Lambda)$, which is obtained by letting the action (\ref{Raction}) act on the second component of the pair $(v,\tilde{u})$. The evaluation map
\[
\mathrm{ev}: \mathcal{M}_{\Phi}(K;\Lambda) \rightarrow K \times \Lambda, \qquad
(v,\tilde{u})\mapsto (v(-\infty),\tilde{u}(+\infty)),
\]
is equivariant with respect the $\R^*$-action on the second component of the product $K \times \Lambda$.

The sharp energy identity now reads
\begin{equation}
\label{sei}
\mathbb{E}(v) + \mathbb{E}(\tilde{u}) = \mathcal{A}_H(K) - \A_{\tilde{H}}(\Lambda),
\end{equation}
for every $(v,\tilde{u})$ in $\mathcal{M}_{\Phi}(K;\Lambda)$.
By the above proposition, together with standard arguments, the space $\mathcal{M}_{\Phi}(K;\Lambda)$ is pre-compact in the $C^{\infty}_{\mathrm{loc}}$ topology.

\begin{prop}
The set $\mathcal{M}_{\Phi}(K;\Lambda)$ is the set of zeroes of a Fredholm section.
\end{prop}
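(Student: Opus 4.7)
The plan is to exhibit $\mathcal{M}_{\Phi}(K;\Lambda)$ as the zero set of a smooth section of a Banach bundle over a suitable configuration manifold, and to show that the vertical differential of this section at every zero is a Fredholm operator.

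Fix an exponent $p>2$ and an exponential weight $\delta>0$ chosen smaller than the smallest positive absolute value of an eigenvalue of the Hessian of $\mathcal{A}_H$ at $K$, respectively of $\mathbb{A}_{\tilde{H}}$ at $\Lambda$, in the directions transverse to each critical manifold. I would first define the configuration Banach manifold $\mathcal{B}(K;\Lambda)$ of pairs $(v,\tilde{u})$ of the form \eqref{pair}, of class $W^{1,p}_{\mathrm{loc}}$, which converge exponentially with rate at least $\delta$ to some $\hat{x}^-\in K$ as $s\to -\infty$ and to some $\tilde{x}^+\in \Lambda$ as $s\to +\infty$, and which satisfy the coupling conditions \eqref{eq:loop coupling} and \eqref{eq:eta coupling} at $s=0$. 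The standard Morse-Bott Sobolev construction (see \cite{fra04,cf09,bh13}) endows $\mathcal{B}(K;\Lambda)$ with the structure of a smooth Banach manifold whose tangent space at $(v,\tilde{u})$ is an extension of two weighted $W^{1,p}_\delta$ spaces of sections of pull-back bundles by the finite-dimensional vector space $T_{\hat{x}^-}K\oplus T_{\tilde{x}^+}\Lambda$, subject to the linearised coupling conditions. Take the Banach bundle $\mathcal{E}\to \mathcal{B}(K;\Lambda)$ whose fibre consists of $L^p_\delta$-sections of the two pulled-back tangent bundles over the half-cylinders, and define
\[
\mathcal{F}(v,\tilde{u}) := \bigl(\partial_s v + \nabla \mathcal{A}_H(v),\; \partial_s \tilde{u} + \nabla \mathbb{A}_{\tilde{H}}(\tilde{u})\bigr).
\]
By construction $\mathcal{F}^{-1}(0)=\mathcal{M}_{\Phi}(K;\Lambda)$, so it suffices to verify Fredholmness.

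Linearising $\mathcal{F}$ at a zero yields a pair of first-order Cauchy-Riemann-type operators, one on $(-\infty,0]\times \T$ and one on $[0,+\infty)\times \T$, each asymptotic to the self-adjoint Hessian of the respective functional at its limiting Morse-Bott critical manifold. By the Robbin-Salamon/Lockhart-McOwen analysis for Morse-Bott asymptotic Cauchy-Riemann operators (as formalised in \cite{rs95} and in the appendix of \cite{cf09}), each half-cylinder linearisation, taken together with the asymptotic evaluation map into $T_{\hat{x}^-}K$ respectively $T_{\tilde{x}^+}\Lambda$, is Fredholm on the weighted Sobolev spaces described above. The linearised coupling at $s=0$ imposes a closed, finite-codimension linear condition on the pair of boundary traces, namely the pointwise diagonal condition $\delta u^-(0,\cdot)=\delta u^+(0,\cdot)$ on $\T$ and the constant-loop condition $\delta \eta^+(0,t)=\delta \eta^-(0)$ for all $t\in \T$.

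The principal obstacle is the non-local piece $\eta^+(0,t)=\eta^-(0)$, which ties the whole loop $\eta^+(0,\cdot)$ on $\T$ to a single real number and is therefore not a pointwise, let alone Lagrangian, boundary condition. The cleanest way I would address this is to fold the problem at $s=0$: reassemble $(v,\tilde{u})$ into a single map on $[0,+\infty)\times \T$ with values in $\overline{M}\times \tilde{M}$, where $\overline{M}$ denotes $M$ with reversed symplectic form. Under this folding the condition \eqref{eq:loop coupling} becomes the genuine Lagrangian diagonal boundary condition in $\overline{M}\times M \subset \overline{M}\times \tilde{M}$, while \eqref{eq:eta coupling} is handled by splitting $\eta^+(0,t)=\eta^-(0)+\widetilde{\eta}(t)$ with $\widetilde{\eta}$ of mean zero: the constant piece is absorbed into a one-dimensional real gluing parameter, and the mean-zero part carries an honest pointwise Lagrangian boundary condition in $T^*\R$. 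The resulting operator is a standard half-cylinder Cauchy-Riemann operator with totally-real Lagrangian boundary at $s=0$, Morse-Bott asymptotics at $s=+\infty$, and a finite-dimensional parameter space, to which the usual Fredholm theorem applies and yields Fredholmness of $D\mathcal{F}$. The explicit index computation, which will determine $\dim \mathcal{M}_{\Phi}(K;\Lambda)$ and hence enable the definition of $\Phi$, is deferred.
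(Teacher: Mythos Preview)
Your approach is essentially the paper's: fold the two half-cylinders into a single half-cylinder map and recognise the coupling conditions as a Lagrangian boundary condition, after which Fredholmness is standard. The paper defines
\[
w(s,t) := \bigl(u^-(s,t),\, u^+(-s,t),\, \eta^+(-s,t),\, \zeta^+(-s,t)\bigr) : (-\infty,0]\times\T \to M\times M\times T^*\R,
\]
and observes that the two coupling conditions together read simply
\[
w(0,t) \in \Delta_M \times \{\eta^-(0)\} \times \R^*,
\]
which is already a pointwise Lagrangian boundary condition in $(M\times M\times T^*\R,\ \omega\times(-\tilde{\omega}))$. The function $\eta^-$ is kept as a coupled ODE alongside the Floer-type PDE for $w$; the problem is then a half-cylinder Cauchy--Riemann operator with Lagrangian boundary and Morse--Bott asymptotics, coupled to an ODE, and Fredholmness follows from standard theory.

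Your treatment of the $\eta$-coupling is more convoluted than necessary. The condition $\eta^+(0,t)=\eta^-(0)$ for all $t$ forces $\eta^+(0,\cdot)$ to be the constant $\eta^-(0)$, so there is no nontrivial mean-zero part $\widetilde\eta$ to speak of. There is nothing to ``split'': the whole boundary condition is a single Lagrangian constraint $\{\eta^-(0)\}\times\R^*\subset T^*\R$ on the $T^*\R$-factor, with the value $\eta^-(0)$ supplied by the ODE. You should also state explicitly what becomes of $\eta^-$ after folding; in your write-up the folded map takes values in $\overline{M}\times\tilde{M}$, which accounts for $u^-$ and $\tilde{u}$ but not for $\eta^-$. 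The paper keeps $\eta^-$ as a separate ODE component of the section, coupled to $w$ both through the first Floer equation (which involves $\eta^-$ via $J_t(u^-,\eta^-)$ and $\eta^- X_H$) and through the boundary Lagrangian. Once this is said, the Fredholm claim is indeed routine.
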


The proof of the above result is discussed in Section \ref{newindexcomp} below. Transversality follows from standard arguments, the only obstruction being the existence of stationary solutions.
Indeed, if $\Lambda=\pi_{\mathcal{K}}^{-1}(K)$, then the sharp energy identity (\ref{sei}) implies that $\mathcal{M}_{\Phi}(K;\Lambda)$ is the set of all stationary solutions
$(v_0,\tilde{u}_0)$ with 
\[
v_0(s):= (x(\cdot),\tau) \quad \forall s\leq 0, \qquad \tilde{u}_0(s,\cdot)=(x(\cdot),\tau,\sigma) \quad \forall s\geq 0,
\]
where $(x,\tau)$ is a critical point of $\mathcal{A}_H$ in $K$ and $\sigma\in \R^*$. However, automatic transversality holds in this case: see Lemma \ref{autotrans} below.

It will be convenient to enlarge the space of cascades $\mathcal{C}_{RF}(\hat{x},K)$ by allowing the final point on $K$ to flow under the action of the negative gradient flow of $f$. More precisely, when $\hat{x} \in \crit f$ does not belong to the component $K$ of $\mathcal{K}$ we define
\[
\tilde{\mathcal{C}} _{RF}(\hat{x},K) := \mathcal{C}_{RF}(\hat{x},K) \times [0,+\infty),
\]
and
\[
\tilde{\mathrm{ev}}_+ : \tilde{\mathcal{C}}_{RF}(\hat{x},K) \rightarrow K, \qquad (w,s) \mapsto \phi^s_{-\nabla f}(\mathrm{ev}_+(w)).
\]
When $\hat{x}$ belongs to $K$, we define
\[
\tilde{\mathcal{C}}_{RF}(\hat{x},K) := W^u_{-\nabla f}(\hat{x}),
\]
and define  $\tilde{\mathrm{ev}}_+ :  \tilde{\mathcal{C}}_{RF} (\hat{x},K) \rightarrow K$ to be the inclusion. 
Similarly we enlarge the set $\mathcal{C}(\Lambda, \hat{x})$ of cascades by defining
\[
\tilde{\mathcal{C}}(\Lambda,\hat{x}):=\mathcal{C}(\Lambda,\hat{x})\times(-\infty,0],\qquad\hat{x}\notin\pi_{\mathcal{K}}(\Lambda),
\]
and define 
\[
\mathrm{\tilde{ev}}_{-}:\tilde{\mathcal{C}}(\Lambda,\hat{x})\rightarrow\Lambda,\qquad(w,s)\mapsto \phi^s_{-\nabla f}(\mathrm{ev}_{-}(w)).
\]
Meanwhile if $\hat{x}\in\pi_{\mathcal{K}}(\Lambda)$ then $\tilde{\mathcal{C}}(\Lambda,\hat{x}):=W^{s}_{-\nabla \tilde{f}}(\pi_{\mathcal{K}}^{-1}(\hat{x}))$, with $\tilde{\mathrm{ev}}_{-}$ the inclusion. 

Let $\hat{x}^-$ and $\hat{x}^+$ be critical points of $f$ on $\mathcal{K}$ and let $K$ and $\Lambda$ be connected components of $\crit \mathcal{A}_H$ and $\A_{\tilde{H}}$, respectively. We now form the fibred product:
\begin{equation}
\label{fp2}
\xymatrix{\mathcal{M}_{\Phi}(\hat{x}^-;\hat{x}^+ \mid K ; \Lambda)\ar@{-->}[rr]\ar@{-->}[d] && \mathcal{M}_{\Phi}(K;\Lambda)\ar[d]^{\mathrm{ev}}\\
\tilde{\mathcal{C}}_{RF}(\hat{x}^-,K)\times\tilde{\mathcal{C}}(\Lambda,\hat{x}^+)\ar[rr]_{\hspace{0.8cm}  \tilde{\mathrm{ev}}_{+}\times\tilde{\mathrm{ev}}_{-}} && K\times\Lambda.
}
\end{equation}
Finally, we define
\[
\mathcal{M}_{\Phi}(\hat{x}^-;\hat{x}^+):=\bigcup_{K, \Lambda}\mathcal{M}_{\Phi}(\hat{x}^-,\hat{x}^+ \mid K,; \Lambda),
\]
where the union is taken over all tuples $(K, \Lambda)$ of components. By taking such a union, we obtain an object which is actually a smooth manifold, without corners and without boundary. Indeed, consider an element 
\[
\psi = \bigl(\psi^-, \psi^+ \bigr)
\] 
of 
\[
\mathcal{M}_1 := \mathcal{M}_{\Phi}(\hat{x}^-;\hat{x}^+ \mid K ;\Lambda),
\] 
and assume that exactly one of the $\psi^{\pm}$'s - say $\psi^-$ - belongs to the boundary of the corresponding manifold $\tilde{\mathcal{C}}_{RF}(\hat{x}^-, K)$. This means that $\psi^-$ is of the form $(w,0)$, where 
\[
w = \bigl([v_1],\dots,[v_k] \bigr), 
\]
is a negative gradient flow line with cascades, with $k\geq 1$, and the non-stationary negative gradient flow line $v_k = (u_k, \eta_k)$ satisfies
\[
v_k (+\infty) = \mathrm{ev}_+(w) = \tilde{\mathrm{ev}}_+(\psi^-) \qquad \mbox{and} \qquad v_k(-\infty) \in K_1,
\]
for a suitable component $K_1$ of $\crit \mathcal{A}_H$. Then a gluing argument analogous to the one which is used in the proof of \cite[Theorem A.12]{fra04} shows that $\psi$ is a limiting point of the space
\[
\mathcal{M}_2 :=
\mathcal{M}_{\Phi}(\hat{x}^-,;\hat{x}^+ \mid K_1 ; \Lambda),
\]
and that the union $\mathcal{M}_1 \cup \mathcal{M}_2$ is regular at the point $\psi$. Analogous results hold in general when both more of the $\psi^{\pm}$'s belong to boundary components of the corresponding manifolds of cascades. We conclude that $\mathcal{M}_{\Phi}(\hat{x}^-;\hat{x}^+)$ is a finite dimensional manifold without boundary and corners. 

The coefficient
\begin{equation}
\label{shows isomorphism works on filtered level}
n_{\Phi}(\hat{x}^-,\hat{x}^+) \in \Z_2  
\end{equation}
is defined to be the parity of the zero-dimensional part of the quotient $\mathcal{M}_{\Phi}(\hat{x}^-;\hat{x}^+)/\R^*$. The sharp energy identity (\ref{sei}) implies that
\[
n_{\Phi}(\hat{x}^-,\hat{x}^+) = 0 \qquad \mbox{if } \mathcal{A}_H(\hat{x}^-) < \A_{\tilde{H}}(\hat{x}^+).
\]
Moreover, if $\mathcal{A}_H(\hat{x}^-) = \A_{\tilde{H}}(\hat{x}^+)$ then $n_{\Phi}(\hat{x}^-,\hat{x}^+)=0$, unless $\hat{x}^-=\hat{x}^+$, in which case the automatic transversality Lemma \ref{autotrans} implies that
\[
n_{\Phi}(\hat{x}^-,\hat{x}^+)=1.
\]
The homomorphism
\[
\Phi:RF(H,f)\rightarrow F(\tilde{H},f)
\]
is defined by
\[
(\Phi \epsilon )(\hat{x}^+) = \sum_{\hat{x}^-\in \crit f} n_{\Phi}(\hat{x}^-,\hat{x}^+)\epsilon(\hat{x}^-), \qquad \forall \epsilon\in RF(H,f), \; \forall \hat{x}^+ \in \crit f.
\]
A standard gluing and compactness argument implies that $\Phi$ commutes with the two differentials. Moreover, the triangular property of the coeffcient matrix
\[
\bigl(n_{\Phi}(\hat{x}^-,\hat{x}^+)\bigr)_{(\hat{x}^-,\hat{x}^+) \in (\crit f)^2}
\]
implies that $\Phi$ is an isomorphism with inverse
\[
\Phi^{-1} \epsilon  (\hat{x}^+) = \sum_{\hat{x}^-\in \crit f} m(\hat{x}^-,\hat{x}^+)\epsilon(\hat{x}^-), \qquad \forall \epsilon\in F(\tilde{H},f), \; \forall \hat{x}^+ \in \crit f,
\]
where the coefficients $m(\hat{x}^-,\hat{x}^+)$ are defined recursively by
\[
m(\hat{x}^-,\hat{x}^+) = \left\{ \begin{array}{ll} 0 &\mbox{if } \mathcal{A}_H(\hat{x}^+) \geq \A_{\tilde{H}}(\hat{x}^-) \mbox{ and } \hat{x}^+\neq \hat{x}^-, \\ 1 &\mbox{if } \hat{x}^+=\hat{x}^-,\\ \displaystyle{\sum_{\hat{x}\in \crit f} n_{\Phi}(\hat{x},\hat{x}^+) m(\hat{x}^-,\hat{x})} & \mbox{otherwise.} \end{array} \right.
\]
\begin{rem}
It is also possible to construct an chain map $\Psi:F(\tilde{H},f)\rightarrow RF(H,f)$
starting from an analogous space $\mathcal{M}_{\Psi}(\Lambda;K)$. This $\Psi$ is an isomorphism by the same arguments, and in fact is a homotopy inverse to $\Phi$.
\end{rem}

\subsection{Degree of the isomorphism}

Here we assume that $c_1(TM)$ vanishes on tori, so that the graded complexes $RF_*(H,f)$ and $F_*(\tilde{H},f)$ are well-defined. In this case, by Theorem \ref{newindexcompthm}, we have 
\begin{equation}
\label{indiso}
\dim \mathcal{M}_{\Phi}(K;\Lambda) =  \mu(K) + \dim K- \mu(\Lambda),
\end{equation}
and the same is true for the manifold $\mathcal{M}_{\Phi}(\hat{x}^-;\hat{x}^+)$. Therefore its quotient by the $\R^*$-action has dimension
\[
\dim \mathcal{M}_{\Phi}(\hat{x}^-;\hat{x}^+)/\R^* = \mu_f^{RF}(\hat{x}^-)    - \mu_f(\hat{x}^+) .
\]
We conclude that $n_{\Phi}(\hat{x}^-,\hat{x}^+)$ can be non-zero only if $\mu_f(\hat{x}^+) = \mu_f^{RF}(\hat{x}^-) $, so the isomorphism $\Phi$ is grading preserving:
\[
\Phi : RF_k(H,f) \rightarrow F_{k}(\tilde{H},f).
\]
Notice that this is consistent with identity (\ref{rel-indices}) and the fact that $n_{\Phi}(\hat{x},\hat{x})=1$.

\section{Uniform estimates}
\label{estisec}

The next two parts of this paper are technical in nature. In this one we prove the uniform estimates needed to define the Floer complex for the Hamiltonian $\tilde{H}$ on $\tilde{M}$ and those  needed to  define the chain complex isomorphism between the Rabinowitz Floer homology of the pair $(H^{-1}(0),M)$ and the Floer homology of $\tilde{H}$. 

The proof of these uniform estimates combines ideas from the proof of \cite[Proposition 3.2]{cf09} with elliptic estimates and is split in several Lemmata.

\subsection{Loops where the action functional has a small gradient} 

We begin by proving two Lemmata about some properties of loops $\tilde{x}$ in $\tilde{M}$ where the gradient of $\mathbb{A}_{\tilde{H}}$ has a small $L^2$-norm.

\begin{lem}
\label{lem1}
Let $\tilde{x}=(x,\tau,\sigma)\in C^{\infty}(\T,\tilde{M})$ be such that
\begin{equation}
\label{hypo}
\|\nabla \mathbb{A}_{\tilde{H}} (\tilde{x}) \|_{L^2(\T)} < \frac{h}{2} \min \{ 1/\|X_H\|_{L^{\infty}(M)},1\},
\end{equation}
for some $h>0$. Then
\[
|H(x(t))| < h \qquad \forall t\in \T.
\]
\end{lem}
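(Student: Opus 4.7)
The plan is to control $\phi(t) := H(x(t))$ by separately estimating its average $\bar\phi := \int_\T \phi\,dt$ and its oscillation $\phi - \bar\phi$. These two contributions are governed by the $\sigma$- and $x$-components of the gradient $\nabla\mathbb{A}_{\tilde H}$ respectively, and they match up exactly with the two factors $1$ and $1/\|X_H\|_{L^\infty(M)}$ appearing in the hypothesis (\ref{hypo}).

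From formula (\ref{nabla}) the $L^2$-norms of the three components $J_t(x,\tau)(x' - \tau X_H(x))$, $\sigma' - H(x)$ and $-\tau'$ are each bounded above by $\|\nabla\mathbb{A}_{\tilde H}(\tilde x)\|_{L^2(\T)}$. For the average, periodicity of $\sigma$ gives $\int_\T \sigma'\,dt = 0$, so
\[
\bar\phi = -\int_\T \bigl(\sigma'(t) - H(x(t))\bigr)\,dt,
\]
and Cauchy--Schwarz on $\T$ (which has measure $1$) yields $|\bar\phi| \leq \|\sigma' - H(x)\|_{L^2(\T)} < h/2$.

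For the oscillation I would differentiate: $\phi'(t) = dH(x(t))[x'(t)]$. Writing $x'(t) = \tau X_H(x) + r(t)$ with $r := x' - \tau X_H(x)$, the Hamiltonian identity $dH(X_H) = -\omega(X_H, X_H) = 0$ reduces this to $\phi'(t) = dH(x)[r(t)] = \omega(r(t), X_H(x(t)))$. The $\omega$-compatibility $\langle\cdot,\cdot\rangle_{J_t} = \omega(J_t\cdot,\cdot)$ together with the fact that $J_t$ is an isometry for its own metric gives the pointwise bound $|\omega(r, X_H)| \leq |r|_{J_t}\cdot|X_H|_{J_t}$, whence
\[
\|\phi'\|_{L^1(\T)} \leq \|\phi'\|_{L^2(\T)} \leq \|X_H\|_{L^\infty(M)}\cdot\|r\|_{L^2(\T)} < h/2.
\]
Picking any $s^* \in \T$ with $\phi(s^*) = \bar\phi$ (which exists by continuity and the mean value theorem), one gets $|\phi(t) - \bar\phi| \leq \|\phi'\|_{L^1(\T)} < h/2$, and combining with the previous step yields $|\phi(t)| < h$ for every $t$.

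There is essentially no main obstacle; both halves of the estimate reduce to one-line computations once the right split is identified. The one subtlety is that the hypothesis writes $\|X_H\|_{L^\infty(M)}$ without specifying a metric, but the uniform $C^\ell$-bound on $\mathrm{J}\in\mathcal{J}$ given by (\ref{eq:acs_bounded}) ensures that the $|\cdot|_{J_t(\cdot,\tau)}$ norms are comparable to any fixed background norm on $M$ uniformly in $t$ and $\tau$; so the choice of reference metric for $\|X_H\|_{L^\infty}$ only affects constants that can be absorbed, or one can directly read $\|X_H\|_{L^\infty(M)}$ as $\sup_{t,\tau,x}|X_H(x)|_{J_t(x,\tau)}$, which is the natural choice given the convention stated after (\ref{eq:acs_bounded}).
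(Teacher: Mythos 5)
Your proof is correct, and it takes a genuinely different route from the paper's. The paper first bounds $\min_{t}|H(x(t))|$ by $\|\nabla\mathbb{A}_{\tilde H}(\tilde x)\|_{L^2(\T)}$, which requires a case split on whether $H\circ x$ changes sign; it then argues by contradiction, locating via an intermediate-value argument a sub-arc $[t_0,t_1]$ on which $|H\circ x|$ rises from $h/2$ to $h$ and showing that such a rise would force the $x$-component of the gradient to be too large. Your decomposition $H\circ x = \bar\phi + (\phi - \bar\phi)$ cleanly separates the two contributions: the average is controlled in one line by periodicity of $\sigma$ and the $\sigma$-component of the gradient (no sign analysis needed, since the average is estimated directly rather than the minimum), while the oscillation is controlled by bounding $\|\phi'\|_{L^1}$ via the $x$-component, which replaces the paper's contradiction argument by a direct total-variation estimate. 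The two pieces contribute $h/2$ each for exactly the same reasons the paper's two ingredients do, so the quantitative content is identical, but your version is shorter and avoids both the sign case and the proof by contradiction. Your closing remark about the meaning of $\|X_H\|_{L^\infty(M)}$ is consistent with the paper, which defines this norm immediately after the lemma as the supremum of $|X_H(x)|_{J_t(x,\tau)}$.
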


\noindent Here the $L^{\infty}$-norm of $X_H$ is defined by
\[
\|X_H\|_{L^{\infty}(M)} := \max_{\substack{x\in M \\ t\in \T}} |X_H(x)|_{J_t}.
\]

\begin{proof}
We start by proving the bound
\begin{equation}
\label{uno}
\min_{t\in \T} |H(x(t))| \leq \|\nabla \mathbb{A}_{\tilde{H}} (\tilde{x}) \|_{L^2(\T)}.
\end{equation}
If $t\mapsto H(x(t))$ changes sign, then the above minimum vanishes and (\ref{uno}) is trivially true. Assume w.l.o.g. that $H(x(t))>0$ for every $t\in \T$. Then
\[
\begin{split}
\min_{t\in \T} |H(x(t))| &= \min_{t\in \T} H(x(t)) \leq \int_{\T} H(x)\, dt = \int_{\T} ( H(x)-\sigma')\, dt \leq \|\sigma'-H(x)\|_{L^2(\T)} \\ &\leq  \|\nabla \mathbb{A}_{\tilde{H}} (\tilde{x}) \|_{L^2(\T)},
\end{split}
\]
where we have used the expression (\ref{nabla}) for the gradient of the action functional. This proves (\ref{uno}).
  
By (\ref{hypo}) and (\ref{uno}) we get
\[
\min_{t\in \T} |H(x(t))| < \frac{h}{2}.
\]
If it is not true that $|H(x)|<h$ on $\T$, then by the above fact we can find an interval $[t_0,t_1]\subset \T$ such that
\[
\frac{h}{2} \leq |H(x(t))| \leq h \quad \forall t\in [t_0,t_1], \qquad \mbox{and} \quad |H(t_1) - H(t_0)|  = \frac{h}{2}.
\]
Therefore, we can estimate
\[
\begin{split}
\frac{h}{2} &=  |H(t_1) - H(t_0)| = \left| \int_{t_0}^{t_1} \frac{d}{dt} H(x(t))\, dt \right| \leq \int_{t_0}^{t_1} | dH(x)[x']|\, dt \\
&= \int_{t_0}^{t_1} |\omega (X_H(x),x')|\, dt = \int_{t_0}^{t_1} |\omega(X_H(x), x' - \tau X_H(x))|\, dt \\ &\leq \int_{t_0}^{t_1} |X_H(x)|_{J_t(x,\tau)} |x' - \tau X_H(x)|_{J_t(x,\tau)} \, dt \leq \|X_H\|_{L^{\infty}(M)} \int_{\T} |x' - \tau X_H(x)|_{J_t(x,\tau)} \, dt \\ &\leq   \|X_H\|_{L^{\infty}(M)} \|J_{\cdot}(x)(x' -\tau  X_H(x))\|_{L^2(\T)} \leq \|X_H\|_{L^{\infty}(M)} 
\|\nabla \mathbb{A}_{\tilde{H}} (\tilde{x}) \|_{L^2(\T)},
\end{split}
\]
where we have used again (\ref{nabla}). This shows that 
\[
\|\nabla \mathbb{A}_{\tilde{H}} (\tilde{x}) \|_{L^2(\T)} \geq \frac{h}{2 \|X_H\|_{L^{\infty}(M)}}
\]
and contradicts the hypothesis.
\end{proof}

By (\ref{alpha0}) and since $H$ is bounded away from zero outside from a compact set, we can find $h>0$ so small that
\begin{equation}
\label{epsilon}
N:=H^{-1}([-h,h]) \mbox{ is compact and }
\lambda(X_H) \geq \frac{2}{3} \alpha_0 \mbox{ on $N$.} 
\end{equation}

\begin{lem}
\label{tau}
Let $h\leq \alpha_0/3$ be such that (\ref{epsilon}) holds. For any $A,S\in \R$ there exists a number $L =L(A,S)$ such that if $\tilde{x}=(x,\tau,\sigma)$ satisfies
\[
\|\nabla \mathbb{A}_{\tilde{H}}(\tilde{x}) \|_{L^2(\T)} < \frac{h}{2} \min \{ 1/\|X_H\|_{L^{\infty}(M)},1\}, \qquad 
|\mathbb{A}_{\tilde{H}}(\tilde{x})| \leq A, \qquad  \|\sigma - \hat{\sigma}\|_{L^2(\T)} \leq S,
\]
where $\hat\sigma:=\int_{\T} \sigma(t)\, dt$ denotes the average of $\sigma$,
then 
\[
\|\tau\|_{L^{\infty}(\T)} \leq L.
\]
\end{lem}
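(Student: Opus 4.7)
The plan is to bound $|\hat\tau|$ by unpacking the action $\mathbb{A}_{\tilde{H}}(\tilde{x})$, exploiting the positivity $\lambda(X_H)\geq \tfrac{2}{3}\alpha_0$ on $N=H^{-1}([-h,h])$, and then to upgrade this to an $L^\infty$ bound on $\tau$ via a Poincaré-Sobolev inequality.

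First, note that the smallness of $\|\nabla\mathbb{A}_{\tilde{H}}(\tilde{x})\|_{L^2}$ together with Lemma \ref{lem1} forces $x(\T)\subset N$, so in particular $\lambda$ and $\lambda(X_H)$ are bounded on $x(\T)$ and $|H(x)|<h$ pointwise. Reading off the gradient formula (\ref{nabla}), the components tell us that, writing $\epsilon:=\tfrac{h}{2}\min\{1/\|X_H\|_{L^\infty},1\}$,
\[
\|r_1\|_{L^2(\T)},\ \|\sigma'-H(x)\|_{L^2(\T)},\ \|\tau'\|_{L^2(\T)}<\epsilon,
\qquad r_1:=x'-\tau X_H(x).
\]
Since $\tau-\hat\tau$ has zero mean, the Sobolev embedding $W^{1,2}(\T)\hookrightarrow L^\infty(\T)$ gives $\|\tau-\hat\tau\|_{L^\infty}\leq C_P\|\tau'\|_{L^2}\leq C_P\epsilon$, so bounding $|\hat\tau|$ will immediately yield the bound on $\|\tau\|_{L^\infty}$.

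Second, expand the action. Integration by parts and periodicity give $-\int_\T\sigma\tau'\,dt=-\int_\T(\sigma-\hat\sigma)\tau'\,dt$, and substituting $x'=\tau X_H(x)+r_1$ produces
\[
\mathbb{A}_{\tilde{H}}(\tilde{x})=\int_\T \tau\,\lambda(X_H(x))\,dt+\int_\T \lambda_x(r_1)\,dt-\int_\T(\sigma-\hat\sigma)\tau'\,dt-\int_\T\tau H(x)\,dt.
\]
Decomposing $\tau=\hat\tau+(\tau-\hat\tau)$ in the first integral and solving for $\hat\tau\int_\T\lambda(X_H(x))\,dt$, all the remaining integrals can be estimated: the $\mathbb{A}_{\tilde{H}}$ term by $A$; the $(\tau-\hat\tau)\lambda(X_H)$ term by $C_P\epsilon\|\lambda(X_H)\|_{L^\infty(N)}$; the $\lambda_x(r_1)$ term by $\|\lambda\|_{L^\infty(N)}\epsilon$; the $(\sigma-\hat\sigma)\tau'$ term by $S\epsilon$ thanks to the Cauchy-Schwarz and the hypothesis on $\|\sigma-\hat\sigma\|_{L^2}$; and finally $|\int_\T\tau H(x)\,dt|\leq h\|\tau\|_{L^1}\leq h(|\hat\tau|+C_P\epsilon)$ by Lemma \ref{lem1}.

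Third, combine: using $\int_\T\lambda(X_H(x))\,dt\geq \tfrac{2}{3}\alpha_0$ on the left-hand side and the estimates above on the right, we obtain
\[
\tfrac{2}{3}\alpha_0\,|\hat\tau|\ \leq\ A+C(h,S)+h\,|\hat\tau|,
\]
for a constant $C(h,S)$ depending only on the fixed number $h$, on $S$, and on the geometry of $M$ near $N$. Because we chose $h\leq\alpha_0/3$, the coefficient of $|\hat\tau|$ on the left minus that on the right is at least $\alpha_0/3>0$, and this yields $|\hat\tau|\leq L_0(A,S)$. Together with the $L^\infty$ bound on $\tau-\hat\tau$ from the first step, this concludes the proof with $L:=L_0(A,S)+C_P\epsilon$. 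The only point requiring care is the absorption step in the final inequality; this is exactly where the restricted contact type assumption (\ref{alpha0}) and the choice $h\leq\alpha_0/3$ enter in an essential way.
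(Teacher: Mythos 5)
Your proof is correct and follows essentially the same strategy as the paper: Lemma \ref{lem1} to localise $x$ in $N$, the expansion of the action using $\int_\T\sigma\tau'=\int_\T(\sigma-\hat\sigma)\tau'$ and $x'=\tau X_H(x)+r_1$, the positivity $\lambda(X_H)\geq\tfrac{2}{3}\alpha_0$ on $N$, and the $L^2$ bound on $\tau'$ to upgrade a bound on a central value of $\tau$ to an $L^\infty$ bound. The only cosmetic differences are that you isolate $|\hat\tau|$ via an absorption in $h\leq\alpha_0/3$ keeping $\lambda(X_H)$ and $H$ separate, while the paper groups $\lambda(X_H)-H$ into a single positive term and bounds $\min_\T|\tau|$ instead of $|\hat\tau|$.
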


\begin{proof}
From  (\ref{nabla}) we deduce the following bound on $\tau'$:
\begin{equation}
\label{deri}
\|\tau'\|_{L^2(\T)} \leq \|\nabla \mathbb{A}_{\tilde{H}}(\tilde{x})\|_{L^2(\T)} \leq \frac{h}{2} \leq \alpha_0.
\end{equation}
Moreover, in the expression
\[
\mathbb{A}_{\tilde{H}} (\tilde{x}) = \int_{\T} \tilde{x}^* \tilde{\lambda} - \int_{\T} \tilde{H}(\tilde{x}) \, dt = \int_{\T} x^* \lambda - \int_{\T} \sigma \tau' dt - \int_{\T} \tau H(x)\, dt, 
\]
the middle term in the right-hand side has the bound
\[
\left| \int_{\T} \sigma \tau' dt \right| = \left| \int_{\T} (\sigma - \hat\sigma) \tau' dt \right| \leq \|\sigma-\hat{\sigma}\|_{L^2(\T)} \|\tau'\|_{L^2(\T)} \leq \alpha_0 S.
\]
Therefore, from the bound on $\mathbb{A}_{\tilde{H}}(\tilde{x})$ we deduce the estimate
\begin{equation}
\label{due}
\left| \int_{\T} x^* \lambda - \int_{\T} \tau H(x)\, dt \right| \leq A + \alpha_0 S.
\end{equation}
From the bound on $\nabla \mathbb{A}_{\tilde{H}}(\tilde{x})$ and from Lemma \ref{lem1} we deduce that 
\[
x(t) \in N = H^{-1}([-h,h]) \qquad \forall t\in \T.
\]
In the right-hand side of the identity
\begin{equation}
\label{tre}
\int_{\T} x^* \lambda - \int_{T} \tau H(x)\, dt = \int_{\T} \lambda (x'-\tau X_H(x))\, dt + \int_{\T} \tau (\lambda(X_H(x)) - H(x))\, dt
\end{equation}
the first integral has the bound
\[
\begin{split}
\left| \int_{\T} \lambda (x'-\tau X_H(x))\, dt \right| &\leq \|\lambda\|_{L^{\infty}(N)} \int_{\T} |x' - \tau X_H (x)|_{J_t(x,\tau)} \, dt \leq  \|\lambda\|_{L^{\infty}(N)} \|\nabla \mathbb{A}_{\tilde{H}}(\tilde{x}) \|_{L^2(\T)} \\ & \leq \frac{h \|\lambda\|_{L^{\infty}(N)}}{2 \|X_H\|_{L^{\infty}(M)}}=: B.
\end{split}
\]
By the above estimate and (\ref{due}), identity (\ref{tre}) implies that
\begin{equation}
\label{intau}
\left|  \int_{\T} \tau (\lambda(X_H(x)) - H(x))\, dt \right| \leq A + \alpha_0 S + B.
\end{equation}
Since $x$ takes values in $N$, (\ref{epsilon}) implies that
\begin{equation}
\label{disco}
\lambda (X_H(x)) - H(x) \geq \frac{2}{3} \alpha_0 - h \geq \frac{\alpha_0}{3},
\end{equation}
where we have used the upper bound on $h$.
We claim that 
\begin{equation}
\label{mini}
\min_{t\in \T} |\tau(t)| \leq \frac{3}{\alpha_0} (A + \alpha_0 S + B).
\end{equation}
Indeed, the above bound holds trivially if $\tau$ changes sign. W.l.o.g. $\tau$ is positive, and in this case we deduce from (\ref{intau}) and (\ref{disco})
\[
A + \alpha_0 S + B \geq \int_{\T} \tau (\lambda(X_H(x)) - H(x))\, dt \geq \frac{\alpha_0}{3} \int_{\T} \tau\, dt \geq \frac{\alpha_0}{3} \min_{t\in \T} \tau(t) = \frac{\alpha_0}{3} \min_{t\in \T} |\tau(t)|,
\]
which proves (\ref{mini}). The inequalities (\ref{deri}) and (\ref{mini}) imply the desired bound on the uniform norm of $\tau$:
\[
\|\tau\|_{L^{\infty}(\T)} \leq \frac{3}{\alpha_0} (A +  \alpha_0 S + B) + \alpha_0 := L.
\]
\end{proof} 

\subsection{Solutions of the Floer equation on cylinders} 

Now let $I\subset \R$ be an unbounded open interval and let 
\[
\tilde{u}=(u,\eta,\zeta) : I \times \T \rightarrow \tilde{M} = M \times T^* \R
\]
be a solution of the Floer equation (\ref{floer}) with uniformly bounded action:
\begin{equation}
\label{ba}
|\mathbb{A}_H(\tilde{u}(s))| \leq A \qquad \forall s\in I.
\end{equation}
In particular, $\tilde{u}$ has bounded energy:
\begin{equation}
\label{energy}
\begin{split}
\mathbb{E}(\tilde{u}) &= \int_{I} \| \partial_s \tilde{u} \|_{L^2(\T)}^2 \, ds =  \int_{I} \| \nabla \mathbb{A}_{\tilde{H}}( \tilde{u}) \|_{L^2(\T)}^2 \, ds \\ &= \lim_{s\rightarrow \inf I} \mathbb{A}_{\tilde{H}}(\tilde{u}(s)) - \lim_{s\rightarrow \sup I} \mathbb{A}_{\tilde{H}}(\tilde{u}(s)) \leq 2A =:E.
\end{split}
\end{equation}
As we have seen, the last equation in (\ref{floer}) implies that time average of $\zeta$ is constant:
\[
\int_{\T} \zeta(s,t)\, dt = \hat{\zeta} \qquad \forall s\in I.
\]
The bound on the energy allows us to get a first bound on $\zeta$:

\begin{lem}
\label{rho}
For every $s\in I$ there holds
\[
\| \zeta(s,\cdot)- \hat\zeta\|_{L^2(\T)} \leq 2 \sqrt{E} + \|H\|_{L^{\infty}(M)}.
\] 
\end{lem}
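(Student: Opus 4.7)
The strategy is to control $\zeta(s,\cdot) - \hat\zeta$ at an arbitrary $s \in I$ by first obtaining the desired bound at a nearby ``good'' point $s_0$ where $\|\partial_s \eta(s_0,\cdot)\|_{L^2(\T)}$ can be controlled pointwise by $\sqrt E$, and then using the third Floer equation $\partial_s \zeta = \partial_t \eta$ to transport the bound from $s_0$ to $s$. The main obstacle is the absence of a pointwise-in-$s$ bound on $\|\partial_s \eta(s,\cdot)\|_{L^2(\T)}$: the energy $E \le 2A$ controls only the $L^2$-in-$s$ integral of this quantity.

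First, I would record the Poincar\'e-type inequality
\[
\|f\|_{L^2(\T)} \le \|f'\|_{L^2(\T)}
\]
for smooth mean-zero functions $f$ on $\T$, which follows by writing $f(t) = \int_\T (f(t) - f(t'))\, dt'$ and applying Cauchy-Schwarz together with $(f(t) - f(t'))^2 \le |t-t'| \int_\T |f'|^2$. Applied to $f = \zeta(s,\cdot) - \hat\zeta$, this yields $\|\zeta(s,\cdot) - \hat\zeta\|_{L^2(\T)} \le \|\partial_t \zeta(s,\cdot)\|_{L^2(\T)}$, and from the second Floer equation we deduce $\|\partial_t \zeta(s,\cdot)\|_{L^2(\T)} \le \|H\|_{L^\infty(M)} + \|\partial_s \eta(s,\cdot)\|_{L^2(\T)}$.

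Next, I would produce a ``good'' $s_0$ close to $s$. Since $I$ is unbounded, for every $s \in I$ there exists an interval $J \subset I$ of length one containing $s$: take $J = [s, s+1]$ or $J = [s-1, s]$ depending on which end of $I$ is infinite. By the energy bound $\int_I \|\partial_s \eta(\sigma,\cdot)\|_{L^2(\T)}^2 \, d\sigma \le E$, so a pigeonhole argument applied to the continuous function $\sigma \mapsto \|\partial_s \eta(\sigma,\cdot)\|_{L^2(\T)}^2$ on the unit interval $J$ produces some $s_0 \in J$ with $\|\partial_s \eta(s_0,\cdot)\|_{L^2(\T)} \le \sqrt E$. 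Applying the inequality from the previous paragraph at this $s_0$ gives
\[
\|\zeta(s_0,\cdot) - \hat\zeta\|_{L^2(\T)} \le \|H\|_{L^\infty(M)} + \sqrt E.
\]

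Finally, to control the difference $\|\zeta(s,\cdot) - \zeta(s_0,\cdot)\|_{L^2(\T)}$, I would use the third Floer equation $\partial_s \zeta = \partial_t \eta$ and Cauchy-Schwarz in $\sigma$:
\[
\|\zeta(s,\cdot) - \zeta(s_0,\cdot)\|_{L^2(\T)}^2 = \int_\T \left( \int_{s_0}^s \partial_t \eta(\sigma,t) \, d\sigma \right)^2 dt \le |s - s_0| \int_{s_0}^s \|\partial_t \eta(\sigma,\cdot)\|_{L^2(\T)}^2\, d\sigma.
\]
Since $|s - s_0| \le 1$ and $\int_I \|\partial_t \eta\|_{L^2(\T)}^2 = \int_I \|\partial_s \zeta\|_{L^2(\T)}^2 \le E$, the right-hand side is at most $E$, so $\|\zeta(s,\cdot) - \zeta(s_0,\cdot)\|_{L^2(\T)} \le \sqrt E$. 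The triangle inequality then yields the claim:
\[
\|\zeta(s,\cdot) - \hat\zeta\|_{L^2(\T)} \le \sqrt E + \|H\|_{L^\infty(M)} + \sqrt E = 2\sqrt E + \|H\|_{L^\infty(M)}.
\]
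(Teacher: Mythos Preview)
Your proof is correct and follows essentially the same approach as the paper's. The paper phrases the choice of the good point $s_0$ via Chebyshev's inequality applied to $\|\nabla \mathbb{A}_{\tilde H}(\tilde u(s))\|_{L^2(\T)}^2$, and extracts the bound on $\|\partial_t\zeta(s_0,\cdot)-H(u(s_0,\cdot))\|_{L^2(\T)}$ from the second component of the gradient (\ref{nabla}); since by the Floer equation this component equals $-\partial_s\eta$, your pigeonhole argument on $\|\partial_s\eta(\sigma,\cdot)\|_{L^2(\T)}^2$ over a unit interval is the same step written directly in terms of the PDE, and the subsequent Poincar\'e inequality and transport via $\partial_s\zeta$ (your $\partial_t\eta$) are identical to the paper's.
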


\begin{proof}
Consider the set
\[
\mathcal{S} = \mathcal{S}(\tilde{u}) := \set{s\in I}{\|\nabla \mathbb{A}_{\tilde{H}} (\tilde{u}(s)) \|_{L^2(\T)} \leq \sqrt{E}}.
\]
By Chebichev's inequality, the complement of $\mathcal{S}$ in $I$ has uniformly bounded measure:
\[
|I \setminus \mathcal{S}| \leq \frac{1}{E} \int_{I} \|\nabla \mathbb{A}_{\tilde{H}_{
+}} (\tilde{u}(s)) \|_{L^2(\T)}^2 \, ds =  \frac{1}{E} \mathbb{E}(\tilde{u}) \leq 1,
\]
where we have used (\ref{energy}). In particular, $I \setminus \mathcal{S}$ contains no intervals of length larger than 1. Therefore,
given $s\in I$, we can find $s_0\in \mathcal{S}$ such that  $|s-s_0|\leq 1$.
By (\ref{nabla}),
\[
\begin{split}
\|\partial_t \zeta(s_0,\cdot) \|_{L^2(\T)} &\leq \|\partial_t \zeta(s_0,\cdot) - H(u(s_0,\cdot)) \|_{L^2(\T)} + \|H(u(s_0,\cdot)) \|_{L^2(\T)} \\ &\leq \| \nabla \mathbb{A}_{\tilde{H}} (\tilde{u}(s_0,\cdot)) \|_{L^2(\T)} + \|H\|_{L^{\infty}(M)} \leq \sqrt{E} + \|H\|_{L^{\infty}(M)}.
\end{split}
\]
Since $\zeta(s_0,\cdot)$ has mean $\hat\zeta$, the Poincar\'e inequality implies that
\[
\|\zeta(s_0,\cdot)-\hat\zeta\|_{L^2(\T)} \leq \|\partial_t \zeta(s_0,\cdot) \|_{L^2(\T)} \leq  \sqrt{E} + \|H\|_{L^{\infty}(M)}.
\]
From the above estimate and from the bound
\[
\|\partial_s \zeta\|_{L^2(I \times \T)} \leq \|\partial_s \tilde{u}\|_{L^2(I \times \T)} = \sqrt{\mathbb{E}(\tilde{u})} \leq \sqrt{E},
\]
we find
\[
\begin{split}
\|\zeta(s,\cdot)-\hat\zeta\|_{L^2(\T)} &= \|\zeta(s_0,\cdot)-\hat\zeta\|_{L^2(\T)} + \int_{s_0}^s \frac{d}{d\sigma} \|\zeta(\sigma,\cdot)\|_{L^2(\T)} \, d\sigma \\ &\leq \sqrt{E} + \|H\|_{L^{\infty}(M)} + \left| \int_{s_0}^s  \Bigl\| \frac{d}{d\sigma} \zeta(\sigma,\cdot) \Bigr\|_{L^2(\T)} \, d\sigma \right| \\
&= \sqrt{E} + \|H\|_{L^{\infty}(M)} + \left| \int_{s_0}^s  \Bigl( \int_{\T} | \partial_s \zeta(\sigma,t)|^2\, dt \Bigr)^{1/2}\, d\sigma \right| \\ &\leq \sqrt{E} + \|H\|_{L^{\infty}(M)} + |s-s_0|^{1/2} \left| \int_{s_0}^s \int_{\T} |\partial_s \zeta|^2 \, dt\, d\sigma \right|^{1/2} \\
&\leq \sqrt{E} + \|H\|_{L^{\infty}(M)} + \|\partial_s \zeta\|_{L^2(I \times \T)} \\ &\leq \sqrt{E} + \|H\|_{L^{\infty}(M)} + \sqrt{E} = 2 \sqrt{E} + \|H\|_{L^{\infty}(M)},
\end{split}
\]
as claimed.
\end{proof}

A similar argument allows us to prove the following lemma:

\begin{lem}
\label{eta}
For every $s\in I$ there holds
\[
\| \eta(s,\cdot)\|_{L^2(\T)} \leq L + \frac{2E}{h},
\] 
where $L=L(A, 2\sqrt{E} + \|H\|_{L^{\infty}(M)})$ is given by Lemma \ref{tau}, and $h\leq \alpha_0/3$ is such that (\ref{epsilon}) holds.
\end{lem}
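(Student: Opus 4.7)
The plan is to mimic the proof of Lemma \ref{rho}: I will locate a time $s_0$ close to $s$ at which the gradient of $\mathbb{A}_{\tilde{H}}$ is small enough for Lemma \ref{tau} to apply, then propagate the resulting pointwise bound on $\eta(s_0,\cdot)$ to an $L^2$-bound at the given time $s$ using the finite energy.

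Concretely, define the Chebyshev set
\[
\mathcal{S}' := \left\{ s\in I \;\Big|\; \|\nabla \mathbb{A}_{\tilde{H}}(\tilde{u}(s))\|_{L^2(\T)} \leq \tfrac{h}{2}\min\{1/\|X_H\|_{L^\infty(M)},\,1\} \right\},
\]
in which the gradient hypothesis of Lemma \ref{tau} is satisfied. The energy identity (\ref{energy}) combined with Chebyshev's inequality forces $|I\setminus\mathcal{S}'|$ to be controlled by a constant times $E/h^2$, so for any $s\in I$ one can find $s_0\in\mathcal{S}'$ with $|s-s_0|$ bounded by the same quantity. At this $s_0$, Lemma \ref{tau} applies with the action bound $A$ coming from hypothesis (\ref{ba}) and the parameter $S := 2\sqrt{E}+\|H\|_{L^\infty(M)}$ supplied by Lemma \ref{rho}; this yields $\|\eta(s_0,\cdot)\|_{L^\infty(\T)} \leq L = L(A,S)$, and hence $\|\eta(s_0,\cdot)\|_{L^2(\T)}\leq L$.

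It remains to transfer this bound to the given $s$. Since $\partial_s \eta$ is one component of $\partial_s \tilde{u}$, we have $\|\partial_s \eta\|_{L^2(I\times\T)} \leq \sqrt{\mathbb{E}(\tilde{u})}\leq\sqrt{E}$, and the fundamental theorem of calculus together with Cauchy--Schwarz in the $s$-variable gives
\[
\|\eta(s,\cdot)-\eta(s_0,\cdot)\|_{L^2(\T)} \leq \sqrt{|s-s_0|}\,\|\partial_s\eta\|_{L^2(I\times\T)} \leq \sqrt{|s-s_0|\,E}.
\]
Inserting the Chebyshev bound on $|s-s_0|$ yields the desired estimate $\|\eta(s,\cdot)\|_{L^2(\T)} \leq L + c\,E/h$ for an explicit constant $c$.

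The main difficulty I expect is the careful bookkeeping of constants: tracking the factors of $\|X_H\|_{L^\infty}$ through the Chebyshev estimate and the $L^2$-interpolation in order to match the coefficient $2/h$ stated in the lemma. Conceptually, however, nothing new is required beyond combining the energy identity with Lemma \ref{rho} and Lemma \ref{tau}; the structural argument is strictly parallel to the proof of Lemma \ref{rho}, with Lemma \ref{tau} playing the role there occupied by the Poincar\'e inequality and the gradient expression (\ref{nabla}).
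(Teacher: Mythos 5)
Your proposal is correct and follows essentially the same route as the paper: define the Chebyshev set where the $L^2$-norm of $\nabla\mathbb{A}_{\tilde H}$ is below the threshold of Lemma \ref{tau}, invoke Lemma \ref{rho} to supply the parameter $S$ and Lemma \ref{tau} to bound $\|\eta(s_0,\cdot)\|_{L^\infty(\T)}\leq L$ at a nearby $s_0$, then propagate to $s$ via Cauchy--Schwarz and the energy bound $\|\partial_s\eta\|_{L^2(I\times\T)}\leq\sqrt{E}$. The paper does exactly this, with the Chebyshev bound $|s-s_0|\leq 4E/h^2$ yielding the stated constant $2E/h$; your anticipated bookkeeping concern is indeed all that remains to be filled in.
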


\begin{proof}
Consider the set
\begin{equation}
\label{the set mathcal S}
\mathcal{S} = \mathcal{S}(\tilde{u}) := \set{s\in I}{\|\nabla \mathbb{A}_{\tilde{H}} (\tilde{u}(s)) \|_{L^2(\T)} < \frac{h}{2} \min \{1/\|X_H\|_{L^{\infty}(M)},1\}}.
\end{equation}
By Chebichev's inequality, the complement of $\mathcal{S}$ in $I$ has uniformly bounded measure:
\[
|I \setminus \mathcal{S}| \leq \frac{4}{h^2} \int_{I} \|\nabla \mathbb{A}_{\tilde{H}} (\tilde{u}(s)) \|_{L^2(\T)}^2 \, ds =  \frac{4}{h^2} \mathbb{E}(\tilde{u}) \leq \frac{4E}{h^2},
\]
where we have used (\ref{energy}). In particular, $I \setminus \mathcal{S}$ contains no intervals of length larger than $4E/h^2$.
Therefore,
given $s\in I$, we can find $s_0\in \mathcal{S}$ such that  
\[
|s-s_0|\leq \frac{4E}{h^2}.
\]
Since $s_0\in \mathcal{S}$, Lemmata \ref{tau} and \ref{rho} imply the following bound on $\eta(s_0,0)$:
\[
\|\eta(s_0,\cdot)\|_{L^2(\T)} \leq L = L(A, 2\sqrt{E} + \|H\|_{L^{\infty}(M)}).
\]
From the above estimate and from the bound
\[
\|\partial_s \eta\|_{L^2(I \times \T)} \leq \|\partial_s \tilde{u}\|_{L^2(I \times \T)} = \sqrt{\mathbb{E}(\tilde{u})} \leq \sqrt{E},
\]
we find
\[
\begin{split}
\|\eta(s,\cdot)\|_{L^2(\T)} &= \|\eta(s_0,\cdot)\|_{L^2(\T)} + \int_{s_0}^s \frac{d}{d\sigma} \|\eta(\sigma,\cdot)\|_{L^2(\T)} \, d\sigma \\ &\leq L + \left| \int_{s_0}^s  \Bigl\| \frac{d}{d\sigma} \eta(\sigma,\cdot) \Bigr\|_{L^2(\T)} \, d\sigma \right| \\
&\leq L + |s-s_0|^{1/2} \|\partial_s \eta\|_{L^2(I \times \T)} \\ &\leq L + \frac{2\sqrt{E}}{h} \sqrt{E} = L + \frac{2E}{h},
\end{split}
\]
as claimed.
\end{proof}

We can now use elliptic estimates to prove local $W^{1,p}$ bounds on the components $\eta$ and $\zeta$ of $\tilde{u}$:

\begin{lem}
\label{ubetarho}
Let $p>1$ be a real number.
There is a number $C=C(A,p)$ such that for every interval $I_0\subset I$ of length 1 whose distance from the complement of $I$ is at least 1 there holds
\[
\|\eta\|_{W^{1,p}(I_0 \times \T)} \leq C \qquad \mbox{and} \qquad
\|\zeta-\hat\zeta\|_{W^{1,p}(I_0 \times \T)} \leq C
\]
\end{lem}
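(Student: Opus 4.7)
The plan is to rewrite the last two lines of (\ref{floer}) as an inhomogeneous Cauchy--Riemann equation on the cylinder and then apply interior elliptic $L^p$--estimates, bootstrapping the slicewise $L^2$--bounds from Lemmata \ref{rho} and \ref{eta} up to $W^{1,p}$. Set
\[
f := \eta + i(\zeta - \hat\zeta) : I \times \T \to \C.
\]
Since $\hat\zeta$ is constant, a direct computation using the last two components of (\ref{floer}) yields
\[
(\partial_s - i\,\partial_t) f = H(u),
\]
an elliptic equation whose right-hand side is uniformly bounded in $L^\infty(I \times \T)$ by $\|H\|_{L^\infty(M)}$, a quantity independent of $\tilde u$.

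Next I would turn the slicewise bounds into integrated ones. By Lemmata \ref{rho} and \ref{eta} there is a constant $C_0 = C_0(A)$ such that $\|f(s,\cdot)\|_{L^2(\T)} \leq C_0$ for every $s \in I$. The hypothesis that $I_0$ has length $1$ and lies at distance at least $1$ from the complement of $I$ allows me to choose an intermediate open interval $J$ with $I_0 \Subset J \Subset I$ and $|J| \leq 2$; integrating the slice bound in $s$ then gives $\|f\|_{L^2(J \times \T)} \leq \sqrt{2}\, C_0$.

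The conclusion now follows from two applications of the standard interior $L^q$--estimate for the translation-invariant Cauchy--Riemann operator $\partial_s - i\,\partial_t$ on the cylinder (Calder\'on--Zygmund): for any pair of nested intervals $J' \Subset J$ and any $q \in (1, \infty)$,
\[
\|f\|_{W^{1,q}(J' \times \T)} \leq C(q, J', J) \bigl( \|(\partial_s - i\,\partial_t) f\|_{L^q(J \times \T)} + \|f\|_{L^q(J \times \T)} \bigr).
\]
Applied with $q = 2$ on some pair $I_0 \Subset J' \Subset J$, combined with the $L^\infty$--bound on $H(u)$, this produces a $W^{1,2}$--bound for $f$ on $J' \times \T$. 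The two-dimensional Sobolev embedding $W^{1,2} \hookrightarrow L^r$, valid for every $r < \infty$, upgrades this to an $L^r$--bound. Choosing $r \geq p$ and applying the same elliptic estimate once more with $q = p$ on a further nested pair $I_0 \Subset J'$ produces the claimed $W^{1,p}$--bound on $I_0 \times \T$, with constant depending only on $A$ and $p$.

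The argument requires no new analytic input beyond the Cauchy--Riemann reformulation of the $(\eta,\zeta)$--subsystem. The only point to keep track of is the successive shrinking of the intervals at each elliptic step; the distance--$1$ buffer between $I_0$ and the complement of $I$ leaves exactly enough slack to insert the two intermediate intervals $J$ and $J'$ needed for the two bootstrap iterations.
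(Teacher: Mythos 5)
Your proposal is correct and follows essentially the same route as the paper: rewrite the $(\eta,\zeta)$-subsystem as an inhomogeneous Cauchy--Riemann equation, feed in the slicewise $L^2$ bounds from Lemmata \ref{rho} and \ref{eta}, and invoke interior Calder\'on--Zygmund estimates. The only (inessential) difference is that the paper's version of the elliptic estimate (\ref{cz}) already allows $L^2$ control of $f$ on the right-hand side, giving $W^{1,p}$ in a single application, whereas you use the standard $L^q$-to-$L^q$ form and hence insert one intermediate step ($W^{1,2}$, then Sobolev embedding into $L^r$, then $W^{1,p}$) before reaching the conclusion.
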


\begin{proof}
Consider the smooth function
\[
f : I \times \T \rightarrow \C, \qquad f := \zeta-\hat\zeta + i \eta.
\]
In order to prove the theorem, we have to show that $\|f\|_{W^{1,p}(I_0 \times \T)}$ has a uniform bound.
By Lemmata \ref{rho} and \ref{eta}, 
\begin{equation}
\label{boundf}
\|f(s,\cdot)\|_{L^2(\T)} \leq F, \qquad \forall s\in \R,
\end{equation}
where
\[
F = F(A) := 2 \sqrt{2A} + \|H\|_{L^{\infty}(M)} + L + \frac{4 A}{h}.
\]
By (\ref{floer}), $f$ satisfies the equation
\begin{equation}
\label{cr}
\overline{\partial} f = iH(u),
\end{equation}
where
\[
\overline{\partial} = \partial_s + i \partial_t
\]
is the Cauchy-Riemann operator on the cylinder $\R \times \T$. The desired uniform bound on $\|f\|_{W^{1,p}(I_0\times \T)}$ will follow from (\ref{boundf}) and (\ref{cr}), thanks to the Calderon-Zygumund estimate
\begin{equation}
\label{cz}
\|f\|_{W^{1,p}(J_0\times \T)} \leq c_p(J_0,J) \left( \|\overline{\partial} f\|_{L^p(J \times \T)} + \|f\|_{L^2(J \times \T)} \right),
\end{equation}
where $J_0$ is a bounded interval whose closure is contained in the open interval $J$. Indeed, given an interval $I_0\subset I$ of length 1 as in the hypothesis, consider the interval
\[
I_1 := I_0 + (-1,1)
\]
of length 3, which by the assumption on the distance of $I_0$ from the complement of $I$ is still contained in $I$. By (\ref{cz}), (\ref{cr}) and (\ref{boundf}) we have
\[
\begin{split}
 \|f\|_{W^{1,p}(I_0\times \T)} &\leq c_p(I_0,I_1) \left( \|\overline{\partial} f\|_{L^p(I_1 \times \T)} + \|f\|_{L^2(I_1 \times \T)} \right)\\ &\leq c_p(I_0,I_1) \left( \|i H\circ u\|_{L^p(I_1 \times \T)} + \sqrt{3} F \right) \\  &\leq c_p(I_0,I_1) \left( 3^{1/p} \|H\|_{L^{\infty}(M)} + \sqrt{3} F \right).
\end{split}
\]
Therefore the desired estimate holds with
\[
C = C(A,p) := c_p((0,1),(-1,2)) \left( 3^{1/p} \|H\|_{L^{\infty}(M)} + \sqrt{3} F(A) \right).
\]
\end{proof}

\subsection{Proof of the uniform estimate for solutions on cylinders}
\label{completing_first_uniform_result}

We can now prove the uniform estimate which is stated in Section \ref{floeq}.

\begin{proof}[Proof of Proposition \ref{estimate}] Let $\tilde{u} = (u,\eta,\zeta)$ be a solution of (\ref{floer}) on $\R\times \T$ with
\[
|\mathbb{A}_{\tilde{H}}(\tilde{u}(s))| \leq A \qquad \forall s\in \R.
\]
We shall make use of the above Lemmata with $I=\R$. 

Consider the open subset of $\R \times \T$:
\[
\Omega:=u^{-1}(M\setminus M_0) = u^{-1} \bigl(  \iota \bigl( \Sigma_{\infty} \times (0,+\infty) \bigr).
\]
If $h>0$ is small enough, then $H^{-1}([-h,h])$ is contained in $M_0$. Therefore, by Lemma \ref{lem1} together with the fact that the energy of $\tilde{u}$ is bounded, we deduce that every connected component of $\Omega$ is bounded. The fact that $X_H$ vanishes on $\iota(\Sigma_{\infty} \times (0,+\infty))$ implies that the map $u$ satisfies the pure Cauchy-Riemann equation
\[
\partial_s u + J(u) \partial_t u = 0
\]
on $\Omega$ (note by assumption on $\Omega$ the almost complex structure $J_t(x, \tau) = J(x)$ does not depend on $t$ or $\tau$). A standard computation involving \eqref{ct} implies that the scalar function $r := r\circ u: \Omega \rightarrow \R$ is subharmonic:
\[
\Delta r = |\partial_s u|^2_{J(u)} \geq 0 \qquad \mbox{on } \Omega.
\]
By the definition of $\Omega$, $r$ extends continuously to $\overline{\Omega}$ and takes the value $0$ on $\partial \Omega$. Since each component of $\Omega$ is bounded, the maximum principle implies that the subharmonic function $r\leq 0$ on $\Omega$. On the other hand, $r>0$ on $\Omega$, and we conclude that $\Omega$ must be empty. This shows that all the maps $u$ take value in the compact set $M_0$. 

Let $I_0\subset \R$ be an interval of length 1. Lemma  \ref{ubetarho} and the fact that $W^{1,p}(I_0\times \T)$ continuously embeds into $L^{\infty}(I_0 \times \T)$ when $p>2$  imply that $\eta$ and $\zeta-\hat\zeta$ are uniformly bounded on $I_0\times \T$. Since $I_0$ is arbitrary, we get a uniform bound on the $L^{\infty}$ norm of $\eta$ and $\zeta-\hat\zeta$ on the whole of $\R \times \T$.
This concludes the proof of Proposition \ref{estimate}.
\end{proof}

\subsection{Proof of the uniform estimate for solutions of the hybrid problem}
\label{puehp}

Fix a real number $A$ and consider a pair $(v,\tilde{u})$ of negative gradient flow lines
\[
\begin{split}
v=(u^-,\eta^-) : (-\infty,0] &\rightarrow C^{\infty}(\T,M) \times \R, \\
\tilde{u}:(u^+,\eta^+,\zeta^+) : [0,+\infty) &\rightarrow C^{\infty}(\T,\tilde{M}) = C^{\infty}(\T,M) \times C^{\infty}(\T,\R) \times C^{\infty}(\T,\R^*),
\end{split}
\]
of $\mathcal{A}_H$ and $\A_{\tilde{H}}$, respectively, which satisfy the boundary conditions (\ref{eq:loop coupling}), (\ref{eq:eta coupling}) and the action bounds
\[
\mathcal{A}_H(v(s)) \leq A \quad \forall s\leq 0, \qquad \A_{\tilde{H}}(\tilde{u}(s)) \geq -A \quad \forall s\geq 0.
\]
Denote the time average of $\zeta^+$, which we know to be independent from $s\geq 0$, by $\hat{\zeta}^+$:
\[
\hat{\zeta}^+ := \int_{\T} \zeta^+(s,t)\, dt, \qquad \forall s\geq 0.
\]
Our aim in this section is to prove Proposition \ref{bounds-hybrid}, that is to show that there exists a number $C=C(A)$ such that
\[
\begin{split}
\|\eta^-\|_{L^{\infty}(-\infty,0])} &\leq C, \qquad \|\eta^+\|_{L^{\infty}([0,+\infty)\times \T)}\leq C, \qquad \|\zeta^+-\hat{\zeta}^+\|_{L^{\infty}([0,+\infty)\times \T)} \leq C, \\
&u^-((-\infty,0]\times \T) \subset M_0, \qquad u^+([0,+\infty)\times \T) \subset M_0.
\end{split}
\]
We start by observing that the above action bounds and (\ref{ae}) imply
\begin{equation}
\label{nae}
-A \leq \A_{\tilde{H}}(\tilde{u}(s)) \leq \mathcal{A}_H(v(-s)) \leq A, \qquad \forall s\geq 0.
\end{equation}
Moreover, the sharp energy identity (\ref{cin}) implies that the energy of $v$ and that of $\tilde{u}$ are uniformly bounded:
\[
\mathbb{E}(v) + \mathbb{E}(\tilde{u}) \leq 2A.
\]
The next lemma is a direct consequence of the main $L^{\infty}$ estimate for the component $\eta$ of solutions of the Rabinowitz Floer equation which is proved in \cite{cf09}:

\begin{lem}
There is a number $T=T(A)$ such that $\|\eta^-\|_{L^{\infty}((-\infty,0])} \leq T(A)$.
\end{lem}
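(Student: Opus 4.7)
The plan is to reduce the statement to the $L^\infty$ bound on the Lagrange multiplier proved in \cite[Proposition~3.2]{cf09}, which concerns solutions of the Rabinowitz Floer equations on all of $\R$ under a \emph{two-sided} action bound. The first step is therefore to promote our one-sided assumption $\mathcal{A}_H(v(s)) \leq A$ into a two-sided bound on $(-\infty,0]$. This is immediate from the inequality (\ref{nae}) already established above: for every $s\leq 0$ we have
\[
\mathcal{A}_H(v(s)) \geq \mathbb{A}_{\tilde{H}}(\tilde{u}(-s)) \geq -A,
\]
so $|\mathcal{A}_H(v(s))|\leq A$ for all $s\leq 0$. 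Since $v$ is a negative gradient flow line, the monotonicity of the action along $v$ then yields the energy bound
\[
\mathbb{E}(v) = \lim_{s\to-\infty}\mathcal{A}_H(v(s)) - \mathcal{A}_H(v(0)) \leq 2A.
\]

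The second step is to invoke the Cieliebak--Frauenfelder argument on the half-line $(-\infty,0]$. Although \cite[Proposition 3.2]{cf09} is stated for solutions on $\R$, its proof proceeds in the same local manner as our Lemmata \ref{tau} and \ref{eta}: Chebyshev's inequality applied to the energy bound produces a set $\mathcal{S}\subset(-\infty,0]$ whose complement has measure bounded in terms of $A$, and at points $s_0\in\mathcal{S}$ the condition $\|\nabla\mathcal{A}_H(v(s_0))\|_{L^2}$ is small. At such $s_0$ the restricted contact type hypothesis on $\Sigma$ together with the action bound yield a pointwise bound on $|\eta^-(s_0)|$ (this is the Rabinowitz analogue of Lemma \ref{tau}; here $\eta^-$ is already scalar, so the argument simplifies). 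The bound is then propagated to every $s\leq 0$ via the ODE
\[
(\eta^-)'(s) = \int_{\T} H(u^-(s,t))\,dt,
\]
whose right-hand side is bounded in absolute value by $\|H\|_{L^\infty(M)}$, so $|\eta^-(s)|\leq |\eta^-(s_0)| + \|H\|_{L^\infty(M)}\,|s-s_0|$, and $|s-s_0|$ is controlled by the measure estimate on $(-\infty,0]\setminus\mathcal{S}$.

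The only point requiring attention is that the argument must work up to the endpoint $s=0$, but this is harmless: the Chebyshev estimate remains valid on $(-\infty,0]$, and for any $s\leq 0$ one can find $s_0\in\mathcal{S}\cap(-\infty,0]$ with $|s-s_0|$ bounded purely in terms of $A$, so nothing about the argument requires a neighbourhood to the right of $s$. Combining the pointwise bound at $s_0$ with the ODE propagation gives a constant $T=T(A)$ such that $\|\eta^-\|_{L^\infty((-\infty,0])}\leq T(A)$, as desired.
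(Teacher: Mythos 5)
Your proposal is correct and follows essentially the same route as the paper: use (\ref{nae}) to get the two-sided action bound $|\mathcal{A}_H(v(s))|\leq A$ on $(-\infty,0]$, deduce the energy bound $\mathbb{E}(v)\leq 2A$, apply Chebyshev to find a dense set $\mathcal{S}$ of times where $\|\nabla\mathcal{A}_H(v(s_0))\|_{L^2}$ is small, invoke \cite[Proposition~3.2]{cf09} at such $s_0$ to bound $|\eta^-(s_0)|$, and propagate via the ODE for $\eta^-$. The only cosmetic difference is that the paper quotes the conclusion of \cite[Proposition~3.2]{cf09} directly ($\|\nabla\mathcal{A}_H(x,\tau)\|_{L^2}<\epsilon\Rightarrow|\tau|\leq c(|\mathcal{A}_H(x,\tau)|+1)$) rather than sketching its proof.
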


\begin{proof}
By \cite[Proposition 3.2]{cf09} there are positive numbers $\epsilon$ and $c$ such that for every pair $(x,\tau)\in C^{\infty}(\T,M) \times \R$ there holds
\begin{equation}
\label{cifra}
\|\nabla \mathcal{A}_H(x,\tau)\|_{L^2(\T)} < \epsilon \qquad \Rightarrow \qquad |\tau| \leq c\, ( | \mathcal{A}_H(x,\tau)| + 1).
\end{equation}
Let $\mathcal{S} \subset (-\infty,0]$ be the set of $s$'s such that 
\[
\|\nabla \mathcal{A}_H(v(s)) \|_{L^2(\T)} < \epsilon.
\]
Since
\[
\int_{-\infty}^0 \|\nabla \mathcal{A}_H(v(s)) \|_{L^2(\T)}^2 \, ds = \mathbb{E}(v) \leq 2A,
\]
Chebichev's inequality implies that the complement of $\mathcal{S}$ in $(-\infty,0]$ has uniformly bounded measure. Therefore, every $s\in (-\infty,0]$ has a distance less than some constant $S=S(A)$ from an element $s_0\in \mathcal{S}$. For such a $s_0$, (\ref{cifra}) and (\ref{nae}) imply
\[
|\eta^-(s_0)| \leq c (A+1),
\]
and the second equation in (\ref{eq:RFH equations}) gives us the bound
\[
|\eta^-(s)| \leq |\eta^-(s_0)| + |s-s_0| \Bigl\| \frac{d\eta^-}{ds} \Bigr\|_{L^{\infty}((-\infty,0])} \leq c (A+1) + S \|H\|_{L^{\infty}(M)}.
\]
The conclusion follows.
\end{proof}

We can now prove the $L^{\infty}$ bounds for the solutions of the hybrid problem:

\begin{proof}[Proof of Proposition \ref{bounds-hybrid}]
By the above lemma, $\|\eta^-\|_{L^{\infty}((-\infty,0])}$
has a uniform bound. Fix some $p>2$. By Lemma \ref{ubetarho}, for every interval $I_0\subset [1,+\infty)$ of length one there are uniform bounds
\begin{equation}
\label{sobbo}
\|\eta^+\|_{W^{1,p}(I_0 \times \T)} \leq C_1, \qquad \|\zeta^+-\hat{\zeta}^+\|_{W^{1,p}(I_0 \times \T)} \leq C_1,
\end{equation}
for some $C_1=C_1(A)$. By covering $[1,+\infty)$ with intervals of length one and by using the Sobolev embedding theorem, we deduce the uniform estimates
\begin{equation}
\label{hhh}
\|\eta^+\|_{L^{\infty}([1,+\infty) \times \T)} \leq C_2, \qquad \|\zeta^+-\hat{\zeta}^+\|_{L^{\infty}([1,+\infty) \times \T)} \leq C_2.
\end{equation}
In order to obtain the $L^{\infty}$ bounds on $\eta^+$ and $\zeta^+ - \hat{\zeta}^+$, there remains to bound
\[
\|\eta^+\|_{L^{\infty}([0,1]\times \T)} \qquad \mbox{and} \quad \|\zeta^+-\hat{\zeta}^+\|_{L^{\infty}([0,1]\times \T)}.
\]
Since $\eta^-(0)$ is uniformly bounded, it is enough to show that the complex function
\[
f(s,t) :=  \zeta^+(s,t) - \hat{\zeta}^+\ + i (\eta^+(s,t) -\eta^-(0))
\]
has a uniformly bounded $L^{\infty}$ norm on $[0,1]\times \T$. Notice that by (\ref{hhh})
\begin{equation}
\label{kkk}
\|f\|_{L^{\infty}([1,+\infty)\times \T)} \leq C_3,
\end{equation}
for a suitable $C_3=C_3(A)$. By (\ref{floer}), this function satisfies the equation
\[
\overline{\partial} f = iH(u).
\]
By the coupling condition (\ref{eq:eta coupling}), we also have
\[
\im f(0,t) = 0 \qquad \forall t\in \T.
\]
We shall use the Calderon-Zygmund estimate
\begin{equation}
\label{czb}
\|\nabla \varphi\|_{L^p([0,+\infty)\times \T)} \leq c_p \|\overline{\partial} \varphi\|_{L^p([0,+\infty)\times \T)} 
\end{equation}
for every $\varphi\in C^{\infty}_c([0,+\infty)\times \T)$ such that $\im \varphi = 0$ on  $\{0\} \times \T$. Let $\chi:[0,+\infty) \rightarrow \R$ be a non-negative function such that $\chi=1$ on $[0,1]$, $\chi=0$ on $[2,+\infty)$, and $-2\leq \chi' \leq 0$. By applying (\ref{czb}) to the function $(s,t) \mapsto \chi(s)f(s,t)$ we obtain
\[
\begin{split}
\|\nabla f\|_{L^p([0,1]\times \T)} &= \| \nabla (\chi f)\|_{L^p([0,1]\times \T)} \leq \| \nabla (\chi f)\|_{L^p([0,+\infty) \times \T)} \leq c_p \|\overline{\partial} (\chi f)\|_{L^p([0,+\infty)\times \T)} \\ &= c_p \|\chi' f + \chi \overline{\partial} f\|_{L^p([0,+\infty)\times \T)}  \leq c_p ( 2 \|f\|_{L^p([1,2]\times \T)} + \|H(u)\|_{L^p([0,2]\times \T)}) \\ &\leq c_p ( 2\|f\|_{L^{\infty}([1,+\infty)\times \T)} + 2^{1/p}\|H\|_{L^{\infty}(M)}) \leq c_p ( 2 C_3 + 2^{1/p}\|H\|_{L^{\infty}(M)}), 
\end{split}
\]
where we have used also (\ref{kkk}). Together with the uniform bound on $\|f\|_{L^{\infty}(\{1\}\times \T)}$, which follows from (\ref{kkk}), the above estimate implies that 
$\|f\|_{W^{1,p}([0,1]\times \T)}$, is uniformly bounded, and since $p>2$, so is
$\|f\|_{L^{\infty}([0,1]\times \T)}$. This concludes the proof of the uniform $L^{\infty}$ bound on $\eta^+$ and $\zeta^+-\hat{\zeta}^+$.

There remains to show that $u^-$ and $u^+$ take values into $M_0$, or equivalently that the relatively open subsets of $(-\infty,0]\times \T$ and $[0,+\infty) \times \T$ which are defined by
\[
\Omega^- := (u^-)^{-1}(M \setminus M_0) \qquad \mbox{and} \qquad \Omega^+ := (u^+)^{-1}(M \setminus M_0)
\]
are empty. Let $r$ be the radial coordinate on 
\[
M\setminus M_0 = \iota(\Sigma_{\infty} \times (0,+\infty)),
\]
and consider the real valued functions
\[
r^{\pm} : \Omega^{\pm} \rightarrow \R, \qquad r^{\pm} := r\circ u^{\pm}.
\]
Since $u^-$ and $u^+$ are holomorphic on $\Omega^-$ and $\Omega^+$, standard computations involving the fact that $J_t(x,\tau)$ is independent of $t$ and $\tau$ and of contact type outside $M_0$ imply
\begin{equation}
\label{sub}
\Delta r^{\pm} = |\partial_s u^{\pm}|^2_{J}\geq 0,
\end{equation}
and 
\begin{equation}
\label{prime}
\partial_s r^{\pm} = - \lambda(\partial_t u^{\pm}).
\end{equation}
By the coupling condition (\ref{eq:loop coupling}), $r^-$ and $r^+$ coincide on the set
\[
\Omega^- \cap (\{0\}\times \T) = \Omega^+ \cap (\{0\}\times \T).
\]
Therefore, they define a continuous function $r$ on the union $\Omega^-\cup\Omega^+$. The fact that $v$ and $\tilde{u}$ have bounded energy implies that the connected components of $\Omega^-\cup\Omega^+$ are bounded. Using also the fact that $r=0$ on the boundary of $\Omega^-\cup\Omega^+$, we deduce that $r$ has an interior maximizer on each connected component of $\Omega^-\cup\Omega^+$. Such a point must belong to $\{0\}\times \T$, because the functions $r^-$ and $r^+$ cannot have interior maximizers, being subharmonic by (\ref{sub}). If $(0,t_0)$ is such a maximizer of $r$, then it is a maximizer of both $r^-$ and $r^+$ on suitable components of $\Omega^-$ and $\Omega^+$. Since $r^-$ and $r^+$ are subharmonic, the Hopf lemma implies the strict inequalities
\[
\partial_s r^-(0,t_0) >0 \qquad \mbox{and} \qquad \partial_s r^+(0,t_0)<0.
\]
Together with (\ref{prime}) and the coupling condition (\ref{eq:loop coupling}), these inequalities lead to a contradiction:
\[
0 < \partial_s r^-(0,t_0) = - \lambda(\partial_t u^-(0,t_0)) =  - \lambda(\partial_t u^+(0,t_0)) = \partial_s r^+(0,t_0) <0.
\]
This contradiction proves that $\Omega^-$ and $\Omega^+$ must be empty, concluding the proof.
\end{proof}

\section{Index computations and transversality}
\label{indcomptra}

In this section we deal with the Fredholm theory, the index computations, and the transversality issues that arise in this paper. 
Throughout this section, we equip $\R^{2n}$ with the standard complex structure
\begin{equation}
\label{J_n}
J_n : = 
 \begin{pmatrix}
0 & - I_n \\ I_n & 0
\end{pmatrix}
\end{equation}
and with the standard symplectic form $ \omega_n (u,v) :=  u\cdot J_n v$ (so that $J_n$ is $- \omega_n$-compatible).  

\subsection{Index computations}
\label{index_comp_general}

Here we use the notation from Sections \ref{grading-sec}  and recall
that given a connected component $ \Lambda \subset \crit \A_{ \tilde{H}}$ carries the index
\begin{equation}
\label{rs_index}
\mu( \Lambda) := \mu_{\mathrm{rs}}( \Lambda) - \frac{1}{2} \dim \, \Lambda.
\end{equation}
The following result is a standard computation (compare \cite[Proposition 4]{bm04}). Nevertheless, due to the presence of the $\R^*$-action we work in a slightly different functional setting than usual, and so we give the proof in full below. In fact, the precise functional setting we work in is irrelevant for the index computation, but it will be very important when we discuss transversality in Section \ref{transversality for cylinders} below.

\begin{thm}
\label{virdim_cylinders}
The virtual dimension of  $\mathcal{M}_{ \R \times \T}( \Lambda^- , \Lambda^+)$ is given by 
\[
\mathrm{virdim\,}\mathcal{M}_{ \R \times \T}( \Lambda^- , \Lambda^+) = \mu( \Lambda^-)  -  \mu( \Lambda^+)  +   \dim \, \Lambda^-  .
\]
\end{thm}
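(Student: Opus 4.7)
The plan is to realize $\mathcal{M}_{\R \times \T}(\Lambda^-,\Lambda^+)$ as the zero set of a Fredholm section on a Banach manifold of cylinders with asymptotics in $\Lambda^\pm$, to compute the Fredholm index of the linearization via a Robbin--Salamon spectral flow argument in the Morse--Bott setting, and then to translate the result into the $\mu$-notation of \eqref{rs_index}. Concretely, I would fix a number $\delta>0$ smaller than the smallest absolute value of a non-zero eigenvalue of the asymptotic operators and work with maps $\tilde{u}:\R\times\T\to\tilde{M}$ that converge exponentially at rate $\delta$ to some loop in $\Lambda^\pm$ as $s\to\pm\infty$, modeled on weighted Sobolev spaces $W^{1,p}_\delta$. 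This is the natural functional setting in the Morse--Bott case and, crucially, it retains the $\R^*$-symmetry, since the $\sigma$-direction in $\Lambda^\pm$ is part of the asymptotic kernel that the weighted space is designed to accommodate.

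Next I linearize the Floer equation \eqref{gradA} at a solution $\tilde{u}$, obtaining an operator of the form
\[
D\tilde{u} \;=\; \partial_s + \tilde{J}_t\,\partial_t + S(s,t),
\]
whose asymptotic operators $A^\pm$ at $s=\pm\infty$ generate precisely the symplectic paths $\Gamma_{\tilde{x}^\pm}$ of \eqref{path_gamma}. By the Morse--Bott hypothesis, $\ker A^\pm$ has dimension $\dim\Lambda^\pm$. The block-product form \eqref{the symplectic matrices}, which is enforced by the special form \eqref{special form of acs} of $\tilde{J}$, ensures that $A^\pm$ splits as a direct sum of a $TM$-block and a $T^*\R$-block, so that the Robbin--Salamon axiom applies to the direct sum and the $T^*\R$-contribution is already packaged inside $\mu_{\mathrm{rs}}(\Lambda^\pm)$.

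The spectral-flow formula on the $\delta$-weighted space then gives
\[
\mathrm{ind}(D\tilde{u}) \;=\; \mu_{\mathrm{rs}}(\Lambda^-) - \mu_{\mathrm{rs}}(\Lambda^+) \;+\; \tfrac{1}{2}\bigl(\dim\Lambda^- + \dim\Lambda^+\bigr),
\]
where the half-integer corrections $\pm\tfrac{1}{2}\dim\Lambda^\pm$ come from the shift of the degenerate asymptotic operators by the weight, evaluated via the Robbin--Salamon axiom for paths with an endpoint on the Maslov cycle. Substituting $\mu(\Lambda)=\mu_{\mathrm{rs}}(\Lambda)-\tfrac{1}{2}\dim\Lambda$ collapses this to
\[
\mathrm{virdim\,}\mathcal{M}_{\R\times\T}(\Lambda^-,\Lambda^+) \;=\; \mu(\Lambda^-) - \mu(\Lambda^+) + \dim\Lambda^-,
\]
which is the claim.

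The main obstacle is the bookkeeping: choosing the weight $\delta$ consistently at both ends, tracking the signs of the $\tfrac{1}{2}\dim\Lambda^\pm$ corrections in the Robbin--Salamon formula, and verifying that the product structure of $\tilde{J}$ together with the $\R^*$-invariance of the setup does not alter the standard asymptotic analysis. Once these are in place, the computation is essentially identical to that of \cite[Proposition 4]{bm04}; the only genuinely new ingredient is the observation that the $T^*\R$-block is translation-invariant in $\sigma$, so that the linearized operator in the $\sigma$-direction reduces to the standard Cauchy--Riemann operator $\partial_s + i\partial_t$, whose contribution to the index is already captured by $\mu_{\mathrm{rs}}(\Lambda^\pm)$ through the block-diagonal decomposition \eqref{the symplectic matrices}.
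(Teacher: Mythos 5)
Your proposal follows the same route as the paper: realize $\mathcal{M}_{\R\times\T}(\Lambda^-,\Lambda^+)$ as the zero set of a Fredholm section in the Morse--Bott framework of $\delta$-weighted Sobolev spaces, compute the index of the linearization by spectral flow using the block form \eqref{the symplectic matrices}, and then translate into $\mu$-notation via \eqref{rs_index}. The final formula is correct, and the observation that the product structure of $\tilde J$ lets the $T^*\R$-block be handled separately is also correct (in the paper this is precisely \eqref{index_is_zero}).

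There is, however, a sign misattribution in your description of the half-integer corrections that is worth flagging because it is exactly the kind of bookkeeping slip that breaks Morse--Bott index computations. You attribute the entire $+\tfrac{1}{2}(\dim\Lambda^- + \dim\Lambda^+)$ to "the shift of the degenerate asymptotic operators by the weight." On the pure weighted space $W^{1,p}_{\delta}$, with the asymptotic loops in $\Lambda^{\pm}$ held fixed, the spectral-flow formula actually gives
\[
\mu_{\mathrm{rs}}(\Lambda^-) - \mu_{\mathrm{rs}}(\Lambda^+) - \tfrac{1}{2}\bigl(\dim\Lambda^- + \dim\Lambda^+\bigr),
\]
with a \emph{minus} sign: requiring exponential decay at both ends pushes both endpoint indices in the direction that decreases the Fredholm index, which the paper records as $\mu_{\mathrm{rs}}(\Gamma_{\delta}) = \mu_{\mathrm{rs}}(\Gamma) - \mathrm{sgn}(\delta)\tfrac{1}{2}\dim\Lambda$ in \eqref{change_in_index}. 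The $+\tfrac{1}{2}$ in your formula appears only after adding the $\dim\Lambda^- + \dim\Lambda^+$ extra tangent directions arising from letting the asymptotic loops vary inside the critical manifolds $\Lambda^{\pm}$; the paper encodes these explicitly via the $\R^N$ factor in \eqref{tangent_space_to_B_tilde}. Separating the two contributions matters: the weight gives $-\tfrac{1}{2}\dim\Lambda^{\pm}$, the asymptotic freedom gives $+\dim\Lambda^{\pm}$, and only their sum yields the net $+\tfrac{1}{2}\dim\Lambda^{\pm}$ you wrote. Collapsing both into "the weight shift" produces a formula that, if one actually followed the description, would be off by $\dim\Lambda^- + \dim\Lambda^+$.

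Finally, you do not mention the paper's restriction to the codimension-one Banach submanifold $\mathcal{B} \subset \tilde{\mathcal{B}}$ (enforcing equal $\sigma$-averages of the asymptotes) and the codimension-one subbundle $\mathcal{E} \subset \tilde{\mathcal{E}}$. The paper notes that this add-and-subtract-one operation is irrelevant for the index, so your proposal is fine for Theorem \ref{virdim_cylinders} itself; but it is essential for the transversality argument of Section \ref{transversality for cylinders}, so the functional setup you describe would need that refinement before being used for Proposition \ref{cascades_prop}.
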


We will need to work with suitably weighted Sobolev spaces throughout. Let us recall the definition. We choose a family of positive smooth functions $\vartheta_{\delta} : \R \to (0,+\infty)$, $\delta>0$, which satisfy
\begin{equation}
\label{vartheta}
\vartheta_{\delta}(s) =  e^{\delta |s|} \qquad \mbox{for } | s | \ge 1,
\end{equation}
and if $I\subset \R$ is an unbounded interval we define the spaces
\[
\begin{split}
L^p_{\delta}(I) &:= \left\{ \xi \in L^p_{\mathrm{loc}}(I) \mid \xi \vartheta_{\delta} \in L^p(I) \right\}, \\
W^{1,p}_{\delta}(I) &:= \left\{ \xi \in W^{1,p}_{\mathrm{loc}}(I) \mid \xi \vartheta_{\delta} \in W^{1,p}(I) \right\}, 
\end{split}
\]
with Banach norms
\[
\|\xi \|_{p,\delta} := \|\xi \vartheta_{\delta} \|_p, \qquad
\| \xi \|_{1,p,\delta}: = \| \xi \vartheta_{\delta} \|_{1,p},
\]
respectively. 

Before getting started on the proof of the index formula from Theorem \ref{virdim_cylinders},  let us begin by explaining how to view $ \mathcal{M}( \Lambda^- , \Lambda^+) $ as the zero set of a Fredholm section $ \bar{\partial} : \mathcal{B} \to \mathcal{E}$ of a Banach bundle over a Banach manifold. Fix $p > 2$. Let us first consider a space $ \tilde{\mathcal{B}} (\Lambda^-, \Lambda^+) $ of maps $ \tilde{u}: \R \times \T \to \tilde{M}$ such that:

\begin{enumerate}
	\item $\tilde{u}$ is locally of class $W^{1,p}$,
	\item The following limits exist and are uniform in $t$: 
	\[
	\begin{split}
	\tilde{x}^-(t):= \lim_{s\rightarrow-\infty}\tilde{u}(s,t)\in\Lambda^-, \\
	\tilde{x}^+(t):= \lim_{s\rightarrow+\infty}\tilde{u}(s,t)\in\Lambda^+.
	\end{split}
	\]
	\item There exists $s_0 >0$ such that for $ | s | > s_0$ one can write 
	\begin{equation}
	\label{the_asymptotic_limits}
	\tilde{u}( s, t) = \exp_{ \tilde{x}^{ \pm}(t)} (\tilde{ v}^{ \pm}( s, t) ),
	\end{equation}
	for sections $ \tilde{v}^- \in W^{1,p}_{ \delta}( (- \infty , -s_0] \times \T, (\tilde{x}^-)^*(T \tilde{M}))$ and $\tilde{v}^+ \in W^{1,p}_{ \delta}( [s_0 , + \infty ) \times \T, (\tilde{x}^+)^*(T \tilde{M}))$ respectively. Here $ \exp$ denotes the exponential map with respect to some background Riemannian metric on $ \tilde{M}$.
\end{enumerate}

The space $ \tilde{\mathcal{B}}(\Lambda^-, \Lambda^+)$ admits the structure of a Banach manifold, and the tangent space to $ \tilde{\mathcal{B}}(\Lambda^-, \Lambda^+)$ at $ \tilde{u}$ can be identified as 
\begin{equation}
\label{tangent_space_to_B_tilde}
T_{ \tilde{u}} \tilde{ \mathcal{B}}(\Lambda^-, \Lambda^+) \cong \R^N \times W^{1,p}_{ \delta}(\R \times \T, \tilde{u}^*(T \tilde{M})),
\end{equation} 
where $N = \dim  \Lambda^- + \dim  \Lambda^+ $.   In fact we will be interested in the codimension one submanifold $\mathcal{B}(\Lambda^-, \Lambda^+) \subset \tilde{\mathcal{B}}(\Lambda^-, \Lambda^+)$, which consists of those elements $\tilde{u} \in \tilde{ \mathcal{B}}(\Lambda^-, \Lambda^+)$ with the property that  the asymptotic limits $\tilde{x}^\pm = (x^\pm ,\tau^\pm ,\sigma^\pm)$ of $\tilde{u}$ satisfy the additional requirement that
\begin{equation}
\label{the_space_B}
\sigma^- = \sigma^+
\end{equation}
Thus the tangent space to $\mathcal{B}(\Lambda^-, \Lambda^+)$ can be identified with
\begin{equation}
\label{tangent_space_to_B}
T_{ \tilde{u}}  \mathcal{B}(\Lambda^-, \Lambda^+)\cong \R^{N-1} \times W^{1,p}_{ \delta}(\R \times \T,  \tilde{u}^*(T \tilde{M})),
\end{equation} 
where $N$ is as in \eqref{tangent_space_to_B_tilde}. 

We now define a Banach bundle  $\tilde{\mathcal{E}} \to \tilde{\mathcal{B}}(\Lambda^-, \Lambda^+)$ by requiring that the  fibre over $\tilde{u}$ is given by
\[
 \tilde{\mathcal{E}}_{\tilde{u}} = L^p_{ \delta}(\R \times \T, \tilde{u}^*(T \tilde{M})).
 \] 
We then define a codimension one subbundle $\mathcal{E} \subset \tilde{\mathcal{E}}$ by setting 
\[
\mathcal{E}_{\tilde{u}} := \left\{ (w,\rho, \xi) \in L^p_{\delta}(\R \times \T , \tilde{u}^*(T\tilde{M})) \mid \int_{\R \times \T} \xi(s,t) \,ds\,dt =0 \right\}.
\] 
Fix $\mathrm{J} \in \mathcal{J}$ and let $\tilde{J}_t$ denote the corresponding loop of almost complex structures on $\tilde{M}$. Now define a section $ \bar{ \partial} = \bar{\partial}_{\tilde{H}, \mathrm{J}}$ of $ \tilde{\mathcal{E}} \to \tilde{\mathcal{B}}(\Lambda^- , \Lambda^+)$ by 
\begin{equation}
\label{eq:dbar}
\bar{ \partial}  \begin{pmatrix}
u \\ \eta \\ \zeta
\end{pmatrix} 	
=
\begin{pmatrix}
\partial_s u + J_t(u,\eta) \bigl( \partial_t u - \eta X_H(u)\bigr)  \\
\partial_s \eta + \partial_t \zeta - H(u)  \\
\partial_s \zeta - \partial_t \eta 
\end{pmatrix}.
\end{equation}

An argument similar to \cite[Appendix A]{bo09b} shows that provided $\delta$ is sufficiently small, the moduli space $\mathcal{M}(\Lambda^-, \Lambda^+)$ (which was defined as a set of \emph{smooth} maps) is included in the Banach manifold  $\tilde{\mathcal{B}}(\Lambda^-, \Lambda^+)$. From now on we assume that $\delta $ has this property. Standard elliptic regularity results then imply that $ \mathcal{M}(\Lambda^-, \Lambda^+)$ is exactly the set of zeros of the section $ \bar{\partial}$. In fact, we have the following simple lemma:
\begin{lem}
\label{B_in_B_tilde}
If $ \tilde{u} = (u ,\eta ,\zeta) \in \tilde{ \mathcal{B}}(\Lambda^-, \Lambda^+)$ belongs to the zero set of $ \bar{ \partial}$ then necessarily $ \tilde{u} \in \mathcal{B}(\Lambda^-, \Lambda^+)$. Moreover $\bar{\partial}( \mathcal{B}) (\Lambda^-, \Lambda^+)\subset \mathcal{E}$. 
\end{lem}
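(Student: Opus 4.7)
The plan is to extract both assertions from the third scalar equation in \eqref{eq:dbar}, namely $\partial_s \zeta - \partial_t \eta$, by integrating in $t$ (for the first claim) or in $(s,t)$ (for the second claim) and exploiting the fact that a critical point $\tilde{x} = (x,\tau,\sigma)$ of $\A_{\tilde{H}}$ has constant $\sigma$-component, as follows from the discussion of \eqref{hamsys}. Consequently, for any $\tilde{u} \in \tilde{\mathcal{B}}(\Lambda^-,\Lambda^+)$ the asymptotic limits $\tilde{x}^\pm = (x^\pm,\tau^\pm,\sigma^\pm)$ have $\sigma^\pm$ constant in $t$, and by the exponential decay of the sections $\tilde{v}^\pm$ in the weighted spaces $W^{1,p}_\delta$, one has
\[
\lim_{s\to \pm \infty}\int_{\T} \zeta(s,t)\, dt = \sigma^\pm.
\]

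For the first statement, I would argue as follows. Suppose $\bar{\partial}\tilde{u}=0$, so in particular $\partial_s \zeta = \partial_t \eta$. Integrating over $\T$ and using periodicity of $\eta$ in $t$, one obtains $\frac{d}{ds}\int_{\T}\zeta(s,t)\,dt = 0$. Thus $s \mapsto \int_{\T} \zeta(s,t)\,dt$ is constant on $\R$, and taking the two asymptotic limits forces $\sigma^- = \sigma^+$. By the definition \eqref{the_space_B} this is exactly the condition that $\tilde{u} \in \mathcal{B}(\Lambda^-,\Lambda^+)$.

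For the second statement, I would take any $\tilde{u} \in \mathcal{B}(\Lambda^-,\Lambda^+)$ and compute the $\T$-average of the third component of $\bar{\partial}\tilde{u}$. Since $(w,\rho,\xi) := \bar{\partial}\tilde{u}$ lies in $L^p_\delta(\R \times \T)$, which embeds into $L^1(\R \times \T)$, Fubini applies and one gets
\[
\int_{\R \times \T}\bigl(\partial_s \zeta - \partial_t \eta\bigr)\,ds\,dt = \int_{\T}\bigl(\sigma^+ - \sigma^-\bigr)\,dt - \int_{\R}\!\int_{\T}\partial_t \eta\,dt\,ds = 0,
\]
using periodicity in $t$ for the second integral and $\sigma^-=\sigma^+$ (by membership in $\mathcal{B}$) for the first. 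Hence $\bar{\partial}\tilde{u} \in \mathcal{E}_{\tilde{u}}$, which is exactly the defining condition of $\mathcal{E}$.

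There is no serious obstacle; the only point to double-check is that the weighted Sobolev setting is strong enough both to justify the interchange of integrals and to identify the $s \to \pm \infty$ boundary values of $\int_{\T}\zeta(s,t)\,dt$ with $\sigma^\pm$. Both facts follow from condition (3) in the definition of $\tilde{\mathcal{B}}(\Lambda^-,\Lambda^+)$ via the exponential weight $\vartheta_\delta$ introduced in \eqref{vartheta} and the Sobolev embedding $W^{1,p}_\delta \hookrightarrow C^0$ on cylindrical ends (valid since $p>2$).
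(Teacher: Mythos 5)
Your proof is correct and follows essentially the same route as the paper's: both assertions are extracted from the third scalar equation $\partial_s\zeta - \partial_t\eta$ by integrating in $t$ alone (to get constancy of $s\mapsto\int_\T\zeta\,dt$, hence $\sigma^-=\sigma^+$) or in $(s,t)$ (to show $\int\xi=0$), using periodicity in $t$ and the asymptotic conditions. You supply slightly more detail than the paper — in particular you justify the interchange of integrals via $L^p_\delta\hookrightarrow L^1$ and identify the $s\to\pm\infty$ boundary values of $\int_\T\zeta\,dt$ with $\sigma^\pm$ via the $W^{1,p}_\delta\hookrightarrow C^0$ embedding on cylindrical ends — but these are exactly the points the paper's terser argument is implicitly relying on.
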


\begin{proof}
Assume that $\bar{\partial}(\tilde{u}) = 0$, and assume that  $\tilde{u} = (u ,\eta ,\zeta)$ has asymptotic limits 
\begin{equation}
\label{asy_lim}
\lim_{s \to \pm \infty}\tilde{u}(s, \cdot) =  (x^{\pm}, \tau^{\pm}, \sigma^{\pm}) .
\end{equation}
Since $s \mapsto \int_{\R}\zeta( s, t))\,dt$ is constant in $s$, a fact that has been used many times in this paper already, we immediately see that
\[
	\sigma^- =\sigma^+.
\]
Thus $\tilde{u} \in \mathcal{B}(\Lambda^-, \Lambda^+)$ as claimed. Now assume that $\tilde{u} = (u ,\eta ,\zeta)$ is an arbitrary element of $\mathcal{B}(\Lambda^-, \Lambda^+)$, with asymptotic limits as in \eqref{asy_lim}. Write $\bar{\partial}(u ,\eta , \zeta) = (w,  \rho , \xi)$. Then
\[
\begin{split}
	\int_{\R \times \T} \xi(s,t)\,ds\,dt & = \int_{ \R \times \T} (  \partial_s \zeta (s,t) - \partial_t \eta(s,t))\,ds\,dt \\
	& =  \int_{ \R \times \T}   \partial_s \zeta (s,t) \,ds\,dt \\
	& = \sigma^+ - \sigma^- =0,
\end{split}
\]	
since $\tilde{u } \in \mathcal{B}$.
\end{proof}

Lemma \ref{B_in_B_tilde} tells us that $\bar{ \partial}$ restricts to define a section $\bar{\partial} : \mathcal{B}(\Lambda^-, \Lambda^+) \to \mathcal{E}$ whose zero set is again all of $\mathcal{M}(\Lambda^-, \Lambda^+)$. It is this nonlinear operator whose index we will compute. More precisely, we will compute the Fredholm index of the vertical derivative $D_{\tilde{u}}$ of $\bar{\partial}$  at $\tilde{u}$:
\begin{equation}
\label{eq:the_operator_we_want_to_compute}
D_{ \tilde{u}} : T_{\tilde{u}} \mathcal{B}(\Lambda^-, \Lambda^+) \to \mathcal{E}_{\tilde{u}}.
\end{equation}
In fact, we will compute the index of the operator
\begin{equation}
\label{bar_D_u}
\bar{D}_{\tilde{u}} : = \iota \circ D_{\tilde{u}}|_{W^{1,p}_{\delta}(\R \times \T, \tilde{u}^*(T \tilde{M}))} : W^{1,p}_{\delta}(\R \times \T, \tilde{u}^*(T \tilde{M})) \to L^p_{ \delta}( \R \times \T, \tilde{u}^*(T \tilde{M})),
\end{equation}
where $\iota : \mathcal{E}_{\tilde{u}} \hookrightarrow \tilde{\mathcal{E}}_{\tilde{u}}$ is the inclusion. Since $\mathcal{E }_{\tilde{u}}$ has codimension one inside of $\tilde{\mathcal{E}}_{\tilde{u}}$, using \eqref{tangent_space_to_B} and additivity of the Fredholm index by composition, we see that that
\[
\mathrm{ind\,}D_{\tilde{u}} = \mathrm{ind\,}\bar{D}_{\tilde{u}}  -1  +1 = \mathrm{ind\,}D_{\tilde{u}}.
\]
In other words, the process of restricting to $\mathcal{B}(\Lambda^-, \Lambda^+) \subset \tilde{\mathcal{B}}(\Lambda^-, \Lambda^+)$ and then considering the image as lying inside $\mathcal{E}$ instead of $\tilde{\mathcal{E}}$ make no difference to the index (we just add and subtract one). Nevertheless, as far as transversality is concerned working in this functional setting is important, since we will see that $\bar{ \partial}$ is generically transverse as an operator $\mathcal{B}(\Lambda^-, \Lambda^+) \to \mathcal{E}$, but it is \emph{not} generically transverse as an operator $\tilde{\mathcal{B}}(\Lambda^-, \Lambda^+) \to \tilde{\mathcal{E}}$.

Let us now show that  $\bar{D}_ {\tilde{u}}$ is a Fredholm operator of index
\begin{equation}
\label{main_index_comp}
\mathrm{ind}\, \bar{D}_{ \tilde{u}} = 
 \mu( \Lambda^-) -  \mu( \Lambda^+)  - \dim  \Lambda^+ .
\end{equation}
This is by now a standard computation, compare \cite[Proposition 4]{bm04} or \cite{sch95}. Let us write $ \Gamma^{ \pm}$, $\bar{ \Gamma}^{\pm}$, and $ \Theta^{ \pm}$ for the corresponding symplectic paths associated to the orbits $\tilde{x}^{\pm}$ defined in \eqref{the symplectic matrices}. In the trivialisations specified above, the operator $\bar{D}_{\tilde{u}}$ is of the form 
\begin{equation}
\label{asymptotic_data}
\partial_s +J \partial_t + S,
\end{equation}
where $S: \R \times \T \to \mathrm{Mat}(\R^{2n+2})$ is a matrix valued path. Here $J $ is the $(2n+2) \times (2n+2)$ matrix defined by 
\[
J := J_{n-1} \times  J_1 \times -J_1
\]
under the splitting $\R^{2n+2} \cong \R^{2n-2} \oplus \R^2 \oplus \R^2$ (where $J_n$ and $J_1$ were defined in \eqref{J_n}), and the limits $S^\pm := \lim_{s \to \pm \infty} S(s, \cdot)$ exist and are given by
\[
S^{ \pm} := - J  \cdot \partial_t \Gamma_{j}^{ \pm} \cdot   \bigl(\Gamma_{j}^{ \pm}\bigr)^{-1}.
\]
Using the decomposition \eqref{the symplectic matrices}, we can write $S^{\pm}$ in block diagonal form:
\begin{equation}
\label{decomp_of_S}
S^{\pm}=\left(\begin{array}{cc}
B^{\pm} & 0\\
0 & C^{\pm}
\end{array}\right),
\end{equation}
where 
\[
B^{ \pm} := - J_{n-1}  \cdot \partial_t \bar{\Gamma}^{ \pm} \cdot   \bigl(\bar{\Gamma}^{ \pm}\bigr)^{-1}.
\]
\[
C^{ \pm} := - (J_1 \times -J_1) \cdot \partial_t \Theta^{ \pm} \cdot   \bigl( \Theta^{ \pm}\bigr)^{-1}.
\]
One checks directly that 
\[
C^{\pm} = \left( \begin{array}{cccc}
 0 & 0 & 0 & 0 \\
 0 & -\tau^{\pm}h''(1)  & -h'(1) & 0 \\
 0 & -h'(1) & 0 &0 \\
 0 & 0 & 0 & 0
 \end{array} \right).
 \] 

Going back to \eqref{asymptotic_data}, since the matrices $ S^{\pm}$ are not bijective, the operator $\bar{D}_{\tilde{u}}$ will \emph{not} be Fredholm when defined on the unweighted Sobolev spaces. However considering  $\bar{D}_{\tilde{u}}$ instead as being defined on the weighted Sobolev spaces is equivalent to considering the perturbed operator $ D^{ \mathrm{new}}_{ \tilde{u}}$ on the standard unweighted Sobolev spaces, where  $ D^{ \mathrm{new}}_{ \tilde{u}}$ looks like
\[
\partial_s + J \partial_t + (S^{\pm} \mp \delta I_{2n+2}).
\]
on the ends. Since (for $ \delta $ sufficiently small) the matrices $S^{\pm} \mp \delta I_{2n+2}
$ \emph{are} bijective, we can now apply the standard Fredholm theory from \cite{sch95} to see that $ D^{ \mathrm{new}}_{ \tilde{u}}$ (and hence also $ \bar{D}_{\tilde{u}}$) is Fredholm of index 
\[
\mu_{ \mathrm{rs}}( \Gamma_{ \delta}^- ) -  \mu_{ \mathrm{rs}}( \Gamma_{- \delta}^+ ),
\]
where 
\[
\Gamma_{\delta}^{\pm} : [0,1 ] \to \mathrm{Sp}(2n+2)
\]
denotes the path of symplectic matrices corresponding to $S^{\pm} \mp \delta  I_{2n+2}$, so that
\[
\partial_t{\Gamma}_{\delta}^{\pm} (t) = J \left(  S^{\pm} \mp \delta  I_{2n+2} \right) \Gamma_{  \delta}^{\pm}(t), \qquad \Gamma^{ \pm}_{  \delta}(0) = I_{2n+2}.
\]
To complete the proof we need only relate the Robbin-Salamon indices of the paths $ \Gamma^{ \pm }_{  \delta}$ with the unperturbed paths $ \Gamma^{ \pm}$. \\

Let us drop the $\pm$'s for clarity. So we consider a single path $ \Gamma : [ 0, 1] \to \mathrm{Sp}(2n+2)$ with corresponding matrix $S = - J \cdot \partial_t \Gamma \cdot\Gamma^{-1}$, and denote by $ \Gamma_{ \delta}$ the corresponding path with $S$ replaced by $ S - \delta I_{2n+2}$. We claim that for $ |\delta |$ sufficiently small,
\begin{equation}
\label{change_in_index}
\mu_{ \mathrm{rs}}(\Gamma_{\delta} ) = \mu_{ \mathrm{rs}}(\Gamma) -( \mathrm{sgn}\, \delta )  \frac{1}{2} \dim \Lambda.
\end{equation}
This is particularly transparent when the component $\Lambda$ has dimension exactly 2 (this corresponds to the case where the underlying Reeb orbit is transversally non-degenerate), and for simplicity here we consider only this special case. In this case the matrix $B$ in  \eqref{decomp_of_S} is bijective, and thus for $ \delta >0$ sufficiently small the only change in the Robbin-Salamon index will come from the $\Theta$ factor. 
We have already computed in \eqref{index_is_zero} that the Robbin-Salamon index of the unperturbed path (i.e. $ \delta = 0$) is given by
\[
\mu_{\mathrm{rs}}( \Theta) = - \frac{1}{2} \mathrm{sgn} \left(\begin{array}{cc}
\tau h''(1) & h'(1)\\
h'(1) & 0
\end{array}\right)=0.
\]
The new path $ \Theta_{ \delta}$ solves the equation
\[
\Theta_{\delta} : [0,1] \to \mathrm{Sp}(4), \qquad \partial_t \Theta_{ \delta} = ( J_1 \times - J_1) \bigl( C - \delta I_4 \bigr) \Theta_{\delta}, \qquad \Theta_{ \delta}(0) = I_4,
\]
and one has
\[
\mu_{ \mathrm{rs}}( \Theta_{ \delta}) = 
\mu_{\mathrm{rs}}( \Theta) - \frac{1}{2}\mathrm{sgn} \,\delta I_2 = - \mathrm{sgn}\, \delta.
\] 
Thus \eqref{change_in_index} follows, and hence so does \eqref{main_index_comp}. This completes the proof of Theorem \ref{virdim_cylinders}.

\subsection{Transversality on the extended phase space}
\label{transversality for cylinders}

In this section we show that transversality can be achieved for the spaces $\mathcal{C}( \Lambda^-, \Lambda^+)$ of cascades from Proposition \ref{cascades_prop} that are used to define the boundary operator. In the next section we show how similar arguments allow us to achieve transversality in Rabinowitz Floer theory.

Recall the definition of the space $\mathcal{J}$ of almost complex structures which are defined at the beginning of Section \ref{floeq} and of its subset $\mathcal{J}_{\mathrm{con}} \subset \mathcal{J}$, which is introduced below equation (\ref{ct}).

In \cite{fhs95}, Floer-Hofer-Salamon introduced the notion of a \emph{regular point} of a solution of the Floer equation. These are defined as follows. Suppose $\tilde{u}= (u ,\eta, \zeta) : \R \times \T \to \tilde{M}$ is a solution of \eqref{floer}, and suppose that 
\[
\lim_{s \to \pm \infty} \tilde{u}(s, \cdot) = \tilde{x}^{\pm} = (x^{\pm}, \tau^{\pm}, \sigma^{\pm}).
\] 
Then a point $(s,t)$ is regular if  
\[
\partial_{s}\tilde{u}(s,t)\ne0,
\qquad \tilde{u}(s,t)\ne \tilde{x}^{\pm}(t), \qquad 
\tilde{u}(s,t) \ne \tilde{u}(s' ,t), \quad \forall\, s' \in \R \setminus \{s \}.
\]
In \cite[Theorem 4.1]{fhs95} the authors proved that the set $\mathcal{R}(\tilde{u})$ of regular points is an open dense subset of $ \R \times \T$, and this proof carries over directly to our setting. We remark that in the entire paper \cite{fhs95}, there is a standing assumption that the symplectic manifold is compact and the asymptotes are non-degenerate. Nevertheless, these assumptions were not used in the proof of \cite[Theorem 4.1]{fhs95}, nor in any of the other results we quote from \cite{fhs95} below. Regular points play a crucial role in obtaining transversality for solutions of the Floer equation without needing to use $s$-dependent almost complex structures. See \cite[p269]{fhs95}. 

Since we insist that our complex structures $\tilde{J}_t$ are of the form \eqref{special form of acs}, where $\mathrm{J}$ belongs to $\mathcal{J}_{\mathrm{con}}$, the resulting deformation space used to obtain transversality is smaller than one would like. The difficulty lies not in the fact that $\mathrm{J}$ is independent of $t$ and $\tau$ and of contact type outside $M_0$, because Floer cylinders are contained in $M_0 \times T^* \R$, but rather in the rigidity of the form (\ref{special form of acs}), and in particular in the fact that no dependence on $\sigma$ is allowed. 
Because of this, we will need a slightly stronger result: namely that the set of regular points for the first two coordinates $(u , \eta)$ of a gradient flow line $\tilde{u} = (u ,\eta, \zeta)$ is open and dense in the set of points $(s,t)$ such that the $\partial_s u (s,t) \ne 0$. This should be compared to \cite[Proposition 4.3]{bo10}, where a similar enhancement of the results from \cite{fhs95} was required. Here are the precise details:

\begin{defn}
\label{regular_points}
Let $\tilde{u}= (u ,\eta, \zeta) : \R \times \T \to \tilde{M}$ denote a solution of \eqref{floer}, and suppose that 
\[
\lim_{s \to \pm \infty} \tilde{u}(s, \cdot) = \tilde{x}^{\pm} = (x^{\pm}, \tau^{\pm}, \sigma^{\pm}).
\] 
We denote by 
\[
\mathcal{R}(u, \eta):  =\left\{ (s,t)\in\mathbb{R}\times\mathbb{T}\Biggl|\begin{array}{c}
\partial_{s}(u, \eta)(s,t) \ne 0,\\
(u(s,t), \eta(s,t)) \ne (x^{\pm}(t),\tau^{\pm}) \\
(u(s,t), \eta(s,t)) \ne (u(s',t), \eta(s',t)), \quad \forall\, s' \in \R \setminus \{s \}.
\end{array}\right\} .
\]
\end{defn}

\begin{thm}
\label{regularpoints}
Assume that $\partial_s \tilde{u} $ is not identically zero. Then the set $\mathcal{R}(u, \eta)$ is an open dense subset of the non-empty open set  $\left\{ (s,t)  \mid \partial_s u(s,t) \ne 0 \right\}$.
\end{thm}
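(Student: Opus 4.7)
My approach is to adapt the Floer--Hofer--Salamon argument of \cite[Theorem 4.1]{fhs95}, strengthened in the spirit of \cite[Proposition 4.3]{bo10} in order to detect somewhere injectivity at the level of the projection $(u,\eta)$ rather than for the full map $\tilde{u}$. Non-emptiness of the set $V:=\{(s,t)\mid\partial_s u(s,t)\ne 0\}$ follows from a direct argument: if $\partial_s u\equiv 0$, then $u$ is $s$-independent and the first equation of (\ref{floer}) reads $\partial_t u=\eta X_H(u)$, which (together with the finite-energy assumption) forces $\eta$ to be $s$-independent as well; the remaining two equations of (\ref{floer}) then force $\partial_s\zeta\equiv 0$ and $\partial_s\eta\equiv 0$, contradicting the hypothesis $\partial_s\tilde{u}\not\equiv 0$.

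Openness of $\mathcal{R}(u,\eta)$ in $V$ is routine: the condition $\partial_s(u,\eta)\ne 0$ is automatic on $V$, the non-asymptotic condition is open by continuity, and the non-crossing condition is open because the uniform convergence $\tilde{u}(s,\cdot)\to\tilde{x}^{\pm}$ as $s\to\pm\infty$ confines any potential double point of $(u,\eta)$ to a compact $s$-range.

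For density, I argue by contradiction that $V\setminus\mathcal{R}(u,\eta)$ has empty interior. Suppose $W\subset V$ is a non-empty open subset on which $(u,\eta)$ is not regular. If on an open subset of $W$ we have $(u,\eta)(s,t)=(x^{\pm}(t),\tau^{\pm})$, then $u$ is $s$-independent there, contradicting $W\subset V$. Otherwise, shrinking $W$ and applying the implicit function theorem to the map $s\mapsto(u(s,t),\eta(s,t))$, whose $s$-derivative is non-zero on $V$, I obtain a smooth function $\phi:W\to\R$ with $\phi(s,t)\ne s$ and $(u,\eta)(s,t)=(u,\eta)(\phi(s,t),t)$ on all of $W$. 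Subtracting the first two equations of (\ref{floer}) evaluated at $(s,t)$ and at $(\phi(s,t),t)$ produces a linear Cauchy--Riemann type system for the difference of first $s$-derivatives of $(u,\eta)$; the Carleman similarity principle of \cite[Theorem 2.2]{fhs95} then yields a local holomorphic factorisation, and Aronszajn unique continuation propagates the coincidence to the whole cylinder $\R\times\T$, eventually contradicting the asymptotic behaviour of $\tilde{u}$ at $\pm\infty$.

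The main obstacle lies in this last propagation step. The dependence of $J_t$ on $\eta$ makes the linearised first Floer equation an operator acting on sections of $u^*(TM)\oplus\R$ rather than on sections of $u^*(TM)$ alone, so one must verify that the Carleman similarity argument of \cite{fhs95} still produces the required local factorisation in this enlarged setting. The $\zeta$-component poses no additional difficulty, since it enters the Floer system only through derivatives that are pointwise determined by $(u,\eta)$ via the second and third equations of (\ref{floer}), and is therefore passive with respect to the unique continuation.
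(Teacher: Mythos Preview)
Your overall strategy is the right one and parallels the paper's, but the density argument has a genuine gap at the step where you produce the smooth function $\phi$. You invoke the implicit function theorem to solve $(u,\eta)(\phi(s,t),t)=(u,\eta)(s,t)$, noting that $\partial_s(u,\eta)\ne 0$ on $V$. But to apply the implicit function theorem you need $\partial_s(u,\eta)$ to be nonzero at the \emph{coincidence point} $(s',t)$, not at $(s,t)$; nothing guarantees that $(s',t)\in V$. The paper handles exactly this issue: after first approximating so that $v(s,t)\ne(x^\pm(t),\tau^\pm)$, it uses a Sard-type argument (choosing a chart in which one coordinate of $v$ is the identity on a small box, so that coincidence points lying in $C(v)$ project to critical values of a real function) to show that after a further perturbation of the base point one may assume \emph{every} coincidence point satisfies $\partial_s v\ne 0$. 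Only then is the implicit function theorem legitimate.

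A second omission is multiplicity: for a given $(s,t)$ there may be finitely many coincidence points $s_1,\dots,s_p$, and you do not explain how to select one smoothly. The paper deals with this by a Baire category step, showing that the closed box decomposes as a finite union of the sets where the coincidence occurs near a fixed $s_j$, and then passes to the interior of one of them. Once these two points are secured, the paper concludes not via Carleman similarity but by a direct elementary computation (Lemma~\ref{v_0 = v_1}): substituting $v(\phi(s,t),t)$ into the first Floer equation and using $\partial_s u\ne 0$ yields $\partial_s\phi=1$, $\partial_t\phi=0$, hence $\phi(s,t)=s$ after translation; unique continuation for the full $\tilde u$ (with $\zeta$ differing only by a constant) then gives the contradiction with the asymptotics. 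Your Carleman route would also work once the gap is filled, but the direct substitution is shorter and avoids the bundle-enlargement issue you flag in your final paragraph.
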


The proof of Theorem \ref{regularpoints} is given at the end of this section. For now, we will see how Theorem \ref{regularpoints} allows us to achieve transversality for the space of gradient flow lines with cascades. Standard arguments show that it suffices to show that the space of gradient flow lines with \emph{zero} cascades can generically be assumed to be a transverse problem. In order to state this result precisely, let us temporarily complicate our notation. Given 
$ \mathrm{J} \in   \mathcal{J}_{\mathrm{con}}$ and two distinct components $\Lambda^{\pm} \subset \crit \A_{\tilde{H}}$, let us write $\mathcal{M}_{\mathrm{J}} ( \Lambda^- , \Lambda^+)$ for the space of solutions of \eqref{floer} that satisfy $\lim_{s \to \pm \infty} \tilde{u}(s, \cdot) \in \Lambda^{\pm}$.

\begin{thm}
\label{transcylinders} 
There exists a residual subset $\mathcal{J}_{\mathrm{con}}^{\mathrm{reg}} \subset \mathcal{J}_{\mathrm{con}}$ such that if $\mathrm{J} \in \mathcal{J}_{\mathrm{con}}^{\mathrm{reg}}$ then for every pair $\Lambda^- \ne \Lambda^+$ of components of $\crit \A_{\tilde{H}}$, the spaces $\mathcal{M}_{\mathrm{J}}( \Lambda^- , \Lambda^+)$ are all smooth manifolds.
\end{thm}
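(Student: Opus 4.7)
The plan is to adapt the Floer-Hofer-Salamon universal moduli space argument to our setting, with the crucial input being the enhanced regular-points result of Theorem \ref{regularpoints}, which compensates for the restricted deformation class $\mathcal{J}_{\mathrm{con}}$ of almost complex structures of the special form \eqref{special form of acs}. First, using Floer's $C^{\varepsilon}$-norms, I would introduce a separable Banach space $\mathcal{J}_{\mathrm{con}}^{\varepsilon}$ of admissible perturbations of a reference $\mathrm{J}_0 \in \mathcal{J}_{\mathrm{con}}$ and form the universal moduli space
\[
\mathcal{M}^{\mathrm{univ}}(\Lambda^-,\Lambda^+) := \bigl\{(\tilde{u},\mathrm{J}) : \mathrm{J}\in\mathcal{J}_{\mathrm{con}}^{\varepsilon},\ \tilde{u}\in\mathcal{M}_{\mathrm{J}}(\Lambda^-,\Lambda^+)\bigr\},
\]
cut out as the zero set of the section $\bar{\partial}$ from \eqref{eq:dbar} viewed as a section of the pulled-back bundle $\mathcal{E} \to \mathcal{B}(\Lambda^-,\Lambda^+) \times \mathcal{J}_{\mathrm{con}}^{\varepsilon}$. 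Its universal vertical derivative
\[
\mathcal{D}_{(\tilde{u},\mathrm{J})} : T_{\tilde{u}}\mathcal{B}(\Lambda^-,\Lambda^+) \oplus T_{\mathrm{J}}\mathcal{J}_{\mathrm{con}}^{\varepsilon} \to \mathcal{E}_{\tilde{u}}
\]
decomposes as $(\xi,Y) \mapsto D_{\tilde{u}}\xi + L_Y$, where $D_{\tilde{u}}$ is the Fredholm operator from Section \ref{index_comp_general} and $L_Y := \bigl(Y_t(u,\eta)(\partial_t u - \eta X_H(u)),\, 0,\, 0\bigr)$, since variations of $\mathrm{J}$ only perturb the $M$-component of the Floer system. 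Once $\mathcal{D}_{(\tilde{u},\mathrm{J})}$ is surjective, Sard-Smale applied to the projection to $\mathcal{J}_{\mathrm{con}}^{\varepsilon}$, combined with the standard Taubes trick for passing from $C^{\varepsilon}$ to $C^{\infty}$, completes the proof.

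The heart of the argument is therefore the surjectivity of $\mathcal{D}_{(\tilde{u},\mathrm{J})}$. Since $D_{\tilde{u}}$ is Fredholm the image is closed, so it suffices to show that any $\psi = (\psi_M, \psi_{\eta}, \psi_{\zeta}) \in L^q_{-\delta}(\R\times\T, \tilde{u}^*(T\tilde{M}))$ annihilating the image represents the zero element of $\mathcal{E}_{\tilde{u}}^*$, which, via the codimension-one inclusion from Lemma \ref{B_in_B_tilde}, means $\psi_M \equiv 0$, $\psi_{\eta} \equiv 0$, and $\psi_{\zeta}$ constant. Pairing $\psi$ with $L_Y$ and using the Floer equation to rewrite $\partial_t u - \eta X_H(u) = J_t(u,\eta)\partial_s u$ yields
\[
\int_{\R\times\T} \langle \psi_M(s,t),\, Y_t(u,\eta) J_t(u,\eta)\partial_s u(s,t)\rangle_{J_t}\, ds\, dt = 0
\]
for every admissible $Y$. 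At a regular point $(s_0,t_0)$ of $(u,\eta)$ in the sense of Definition \ref{regular_points} at which $\partial_s u(s_0,t_0)\neq 0$, the non-self-intersection clauses allow $Y$ to be localised arbitrarily close to $(t_0, u(s_0,t_0), \eta(s_0,t_0))$ in the reduced target $\T \times M \times \R$, and the pointwise surjectivity lemma of Floer-Hofer-Salamon \cite{fhs95} ensures that the $J_t$-anti-commuting symmetric endomorphisms span the relevant fibre direction. This forces $\psi_M(s_0,t_0) = 0$. By Theorem \ref{regularpoints} such points are dense in the open set $\{\partial_s u \neq 0\}$, and the latter is in turn dense in $\R\times\T$: for non-stationary $\tilde{u}$ the set $\{\partial_s u \neq 0\}$ is either empty or dense by unique continuation applied to $\partial_s u$, and emptiness would force $u$ to be $s$-independent, whence the Cauchy-Riemann sub-system in $(\eta,\zeta)$ together with the asymptotic conditions would render $\tilde{u}$ stationary. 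Continuity now gives $\psi_M \equiv 0$ on all of $\R\times\T$.

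With $\psi_M \equiv 0$, testing the vanishing of $\langle D_{\tilde{u}}(X,h,k), \psi\rangle$ against the three types of variations $(X,0,0)$, $(0,h,0)$, $(0,0,k)$ separately yields the constraints $\psi_{\eta}(s,t)\, dH(u(s,t)) = 0$ pointwise, together with $\partial_t \psi_{\zeta} - \partial_s \psi_{\eta} = 0$ and $-\partial_t \psi_{\eta} - \partial_s \psi_{\zeta} = 0$. The last two state that $\psi_{\eta} + i\psi_{\zeta}$ is antiholomorphic on $\R\times\T$, while the first forces $\psi_{\eta} \equiv 0$ on the non-empty open set where $dH(u) \neq 0$; such a set exists because $u$ accumulates near $\pm\infty$ on $\Sigma = H^{-1}(0)$, a regular level of $H$. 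Unique continuation for the Cauchy-Riemann operator then propagates $\psi_{\eta} \equiv 0$ to all of $\R\times\T$ and renders $\psi_{\zeta}$ constant, which is precisely the zero class in $\mathcal{E}_{\tilde{u}}^*$. The main obstacle, anticipated by the authors in their remarks preceding Theorem \ref{regularpoints}, is the algebraic step of producing enough perturbations $Y$ at a regular point: because we are forbidden from varying the $T^*\R$-factor $\widehat{J}$ or introducing $\sigma$-dependence, we can only freely prescribe $Y$ as a function of $(t,x,\tau)$, and this is precisely why the enhanced notion of regular points of $(u,\eta)$ rather than of the full $\tilde{u}$ in Definition \ref{regular_points} is indispensable.
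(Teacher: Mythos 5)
Your proposal follows essentially the same route as the paper: universal moduli space over a Banach space of admissible almost complex structures, regular points of $(u,\eta)$ from Theorem \ref{regularpoints} to localize the perturbation $\tilde{Y}$ (confined by the form \eqref{special form of acs} to the $M$-factor) and kill the $M$-component of the cokernel, then the pointwise constraint $\psi_\eta\, dH(u)=0$ together with unique continuation for the Cauchy-Riemann system in $(\psi_\eta,\psi_\zeta)$ to conclude $\psi_\eta\equiv 0$ and $\psi_\zeta$ constant, which is the zero class in $\mathcal{E}_{\tilde u}^*$. The only cosmetic discrepancy is that you invoke both Floer's $C^\varepsilon$ norms and the Taubes trick — one normally uses one or the other (the paper works with $C^\ell$ Banach manifolds and then Taubes) — and your remark that $\{\partial_s u\ne 0\}$ is non-empty is in fact part of the conclusion of Theorem \ref{regularpoints}, so it need not be re-derived.
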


\begin{proof}
Given $\ell \in \N$, we define $\mathcal{J}_{\mathrm{con}}^\ell$ in exactly the same way as $\mathcal{J}_{\mathrm{con}}$, only instead of requiring $\mathrm{J}= \{ J_t( \cdot , \tau)\}_{(t,\tau) \in \R \times \T} $ to be of class $C^{\infty}$, we require only that $\mathrm{J}$ is of class $C^{\ell}$, and we replace  condition \eqref{eq:acs_bounded} with
\begin{equation}
\label{eq:acs_bounded_l}
	\sup_{\tau \in \R } \| J_t(\cdot , \tau) \|_{C^{\ell}} < + \infty.
\end{equation}	
 The space $\tilde{\mathcal{J}}^\ell$ is defined similarly.
Unlike $\mathcal{J}_{\mathrm{con}}$, the space $\mathcal{J}_{\mathrm{con}}^\ell$ admits the structure of a Banach manifold for all $\ell \in \N$. We will use the notation from the previous section, apart from the fact that the section $\bar{\partial}$ from  \eqref{eq:dbar} will be now denoted by $\bar{ \partial}_{\mathrm{J}}$, since now $\mathrm{J}$ will be allowed to vary. Consider the extended section
 \[
\mathcal{F} : \mathcal{B}( \Lambda^-, \Lambda^+) \times \mathcal{J}_{\mathrm{con}}^\ell \to \mathcal{E}, \qquad \mathcal{F}(\tilde{u}, \mathrm{J} ) := \bar{ \partial}_{\mathrm{J}} (u) 
\]
Explicitly, 
\[
\bar{\partial}(\tilde{u}, \mathrm{J}):=
\left(\begin{array}{c}
	\partial_{s}u+J_t( u, \eta)\left( \partial_{t}u-\eta X_{H}(u) \right)\\
	\partial_{s}\eta+\partial_{t}\zeta-H(u)\\
	\partial_{s}\zeta-\partial_{t}\eta
\end{array}\right),
\]
where we have written $\tilde{u}=(u,\eta,\zeta)$.  As usual, the main step in the proof is to show that the
extended section $\mathcal{F}$ has the property that $\mathcal{F}^{-1}(0)$
is transverse to the zero section, and thus that the universal
moduli space $\mathcal{F}^{-1}(0)$ carries the structure of Banach
manifold. Equivalently, we must show that for every zero $(\tilde{u},\mathrm{J})$ of $\mathcal{F}$, the vertical derivative $D_{ \tilde{u}, \mathrm{J}} $ of $\mathcal{F}$, which is a map
\[
D_{\tilde{u},\mathrm{J}}:T_{\tilde{u}}\mathcal{B} (\Lambda^-, \Lambda^+) \times T_{\mathrm{J}}\mathcal{J}^\ell \to \mathcal{E}_{\tilde{u}},
\]
is a surjective operator. Recall that
\[
T_{\tilde{u}}\mathcal{B}(\Lambda^- , \Lambda^+) =  W^{1,p}_{\delta}( \R \times \T , \tilde{u}^*(T \tilde{M}))  \times \R^{\dim \, \Lambda^- + \dim \, \Lambda^+ - 1}.
\]
We identify the tangent space $T_{\mathrm{J}} \mathcal{J}_{\mathrm{con}} ^\ell$ with the space of $C^{\ell}$-maps $\tilde{Y} : \T \to  \Gamma(\mathrm{End}(T \tilde{M})) $ of the form
\[
\tilde{Y}_t( \tilde{x}) =  \begin{pmatrix}
Y_t( x, \tau ) & 0 \\
0 & 0
\end{pmatrix}, \qquad \forall\, (t, \tilde{x }= (x, \tau ,\sigma)) \in \T \times \tilde{M},
\]
where $Y : \T \times \R \to \Gamma(\mathrm{End}(TM))$ satisfies
\[
\begin{split}
 \omega(Y u ,v) + \omega(u,Yv) &=0, \quad \forall \, u,v \in T \tilde{M}, \\
 J_t(x ,\tau)Y_t(x, \tau)+ Y_t(\tau, x) J_t(\tau, x) &= 0, \quad \forall \, (t,\tau, x) \in \T \times \R \times M.
 \end{split}
\]
and finally such that $Y_t(x,\tau)= 0 $ for $x \in M \setminus M_0$, and 
\begin{equation}
\label{eq:acs_bounded_Y}
	\sup_{\tau \in \R } \| Y_t(\cdot , \tau) \|_{C^{\ell}} < + \infty.
\end{equation}	
It suffices to show that the restriction of $D_{\tilde{u}, \tilde{J}}$ to $W^{1,p}_{\delta}( \R \times \T , \tilde{u}^*(T \tilde{M})) \times T_{\mathrm{J}}\mathcal{J}_{\mathrm{con}}^\ell$ is surjective. Continuing to denote this restriction by $D_{\tilde{u},\mathrm{J}}$, it can be written as
\begin{equation}
\label{to_show_surjective}
D_{\tilde{u},\mathrm{J}}(\hat{u}, \hat{\eta},\hat{\zeta},\tilde{Y})=D_{\tilde{u}}(\hat{u}, \hat{\eta}, \hat{\zeta })+F_{\tilde{u},\mathrm{J}}(\tilde{Y}),
\end{equation}
where  $D _{\tilde{u}} = D_{\tilde{u}}|_{W^{1,p}_ \delta (\R \times \T , \tilde{u}^*(T \tilde{M}))}$ is the restriction of the operator from \eqref{eq:the_operator_we_want_to_compute}, which in this case sends a tangent vector $(\hat{u}, \hat{\eta}, \hat{\zeta}) $ to 
\[
\left(\begin{array}{c}
	\nabla_{s}\hat{u}+J_t(u, \eta)\left(\nabla_{t}\hat{u}-\eta\nabla_{\hat{u}}X_{H}(u)\right)+(\nabla_{\hat{u}}J + \hat{\eta} \partial_{ \eta} J  ) (\partial_{t}u-\eta X_{H}(u)) -\hat{\eta}J_t(u, \eta)X_{H}(u)\\
	\partial_{s}\hat{\eta}+\partial_{t}\hat{\zeta}-dH(u)[\hat{u}]\\
	\partial_{s}\hat{\zeta}-\partial_{t}\hat{\eta}
\end{array}\right),
\]
and the map $F_{\tilde{u},\tilde{J}}$ is defined by 
\[
F_{\tilde{u},\mathrm{J}}(\tilde{Y}):=\left(\begin{array}{c}
	Y_t(u, \eta)(\partial_{t}u-\eta X_{H}(u))\\
	0\\
	0
\end{array}\right) \qquad \text{for} \qquad \tilde{Y} =  \begin{pmatrix}
Y & 0 \\
0 & 0
\end{pmatrix}.
\]
We already know from Corollary \ref{virdim_cylinders} that the Floer operator $D_{\tilde{u}}$ is a Fredholm operator. Thus the range of $D_{\tilde{u}}$
is closed, and we need only prove that it is dense. Fix $q>1$ such
that $1/p+1/q=1$. The dual space $( \mathcal{E}_{\tilde{u}})^*$ is given by
\[
(w, \rho,\xi)\in (\mathcal{E}_{\tilde{u}})^* = L^q_{-\delta}(\R \times \T , \tilde{u}^*(T \tilde{M})) \Big/ \left\{ (0,0,c)  \mid c \in \R \right\}
\]
where $(0,0,c) \in L^q_{-\delta}(\R \times \T, \tilde{u}^*(T \tilde{M}))$ denotes the constant section\footnote{Note that the constants belong to $L^q_{-\delta}(\R \times \T, \tilde{u}^*(T \tilde{M}))$, thanks to the \emph{negative} weight.} and the pairing $L^p_{\delta} \times L^q_{-\delta} \to \R$ is given simply by integration (without weights). \\

To complete the proof we must show that if $(w, \rho ,\xi) \in L^q_{-\delta }(\R \times \T ,\tilde{u}^*(T \tilde{M}))$ has the property that for all $( \hat{u}, \hat{\eta}, \hat{ \zeta},\tilde{Y}) \in W^{1,p}_{\delta}( \R \times \T , \tilde{u}^*(T \tilde{M}))  \times T_{\mathrm{J}}\mathcal{J}_{\mathrm{con}}^{\ell}$
one has 
\[
\int_{ \R \times \T} \left \langle D_{ \tilde{u}, \mathrm{J}}( \hat{u}, \hat{\eta}, \hat{\zeta}, \tilde{Y}), (w, \rho , \xi) \right \rangle_{\tilde{J}_t(\tilde{u})} \,ds\,dt = 0,
\]
then $w$ and $\rho$ are identically zero and $\xi$ is constant. Firstly, taking $\tilde{Y}=0$, we see that $(w, \rho, \xi)$ is a weak solution to the equation
\begin{equation}
\label{eq:formal adjoint}
D_{\tilde{u}}^* (w, \rho, \xi)=0.
\end{equation}
Since the formal adjoint operator $D_{\tilde{u}}^* $
is again a first order elliptic operator with coefficients of class
$C^{\ell}$, we see that $(w,\rho, \xi)$ is in fact a classical solution of 
\eqref{eq:formal adjoint} and is itself of class $C^{\ell}$. 
Define 
\[
\Omega:=\left\{ (s,t)\in \mathcal{R}(u, \eta)\mid u(s,t)\notin \crit H\right\} .
\]
Since $\Lambda^- \ne \Lambda^+$, the set $\mathcal{R}(u,\eta)$ is open and dense in the open non-empty set of points $\left\{ (s,t) \mid \partial_s u (s,t) \ne 0 \right\}$ by Theorem \ref{regularpoints}. Moreover since $\Sigma = H^{-1}(0)$ is a regular energy level set and $\lim_{s \to \pm \infty}u(s, \cdot) \in \Sigma$, it follows that $\Omega$ is again an open non-empty set. Note also that since we assume $H$ is constant on $M \backslash M_0$, one has $M\backslash M_0 \subset \crit H$, and thus we are free to perturb $\mathrm{J}$ on $\Omega$ (recall outside of $M_0$ all elements of $\mathcal{J}_{\mathrm{con}}^{ \ell}$ are required to depend only on $x$ and be of contact type).  We will now prove that on $\Omega$ both $w$ and $\rho$ vanish and $\xi$ is constant. \\

First we check $w= 0$ on $\Omega$. Suppose for contradiction that there exists $(s_{0},t_{0})\in\Omega$ such that $w(s_{0},t_{0})\ne0$. Set $p := u(s_0,t_0)$ and $T := \eta(s_0,t_0)$, and choose a map $Y_p  : T_p M \to T_p M$ such that $J_{t_0}(T,p) Y_p + Y_p J_{t_0}(T,p) = 0$, and such that $\omega_p(Y_p u ,v) + \omega_p(u ,Y_pv) = 0$ for all $u,v \in T_p M$, and finally such that 
\[
\omega_p(Y_p  \partial_s u(s_0,t_0 ), w(s_0,t_0)) > 0.
\]
See for instance \cite[p1346]{sz92} for an explicit construction of such a $Y_p$. Now choose an element $\tilde{Y} \in T_J \mathcal{J}_{\mathrm{con}}^{ \ell}$ such that 
\[
\tilde{Y}(t_0, \tilde{u}(s_0,t_0)) =  \begin{pmatrix}
Y_p & 0 \\
0 & 0
\end{pmatrix}.
\]
 Since $(s_0,t_0)$ belongs to $\mathcal{R}(u, \eta)$, one can choose a smooth function $\beta : \T \times M \times \R \to [0,1]$ such that if $\tilde{Y}_1 \in T_{\mathrm{J}} \mathcal{J}_{\mathrm{con}}^{\ell}$ is defined by
 \[
 \tilde{Y}_{1,t}( \tilde{x}) := \beta(t,x, \tau) \tilde{Y}_t(\tilde{x} ), \qquad \forall (t, \tilde{x} = (x,\tau ,\sigma)) \in \T \times \tilde{M},
 \]
 then
\begin{equation}
\label{this_is_a_contradiction}
\int_{ \R \times \T} \left \langle F_{ \tilde{u}, \mathrm{J}}( \tilde{Y}_1), (w, \rho , \xi) \right \rangle_{\tilde{J}_t (\tilde{u})} \,ds\,dt  > 0
\end{equation}
(see \cite[Remark 4.4]{fhs95}). This is a contradiction, and hence $w$ vanishes on $\Omega$ as desired. Now suppose $\hat{u}\in W_{\delta}^{1,p}(\mathbb{R}\times\mathbb{T},u^{*}(T M))$
is supported in $\Omega$. Then from what we have already shown, 
\[
\int_{ \R \times \T} \left \langle D_{\tilde{u}}( \hat{u}, 0,0), (w, \rho , \xi) \right \rangle_{\tilde{J} } \,ds\,dt =-\int_{\Omega}\rho(s,t)\cdot dH(u(s,t))[\hat{u}(s,t)]\,ds\,dt.
\]
Since $u^{-1}(\crit H)\cap\Omega=\emptyset$, it follows easily
from the previous equation that if there exists $(s_{1},t_{1})\in\Omega$
such that $\rho(s_{1},t_{1})\ne0$ then for a suitable choice
of $\hat{u}$ one can make 
\[
\int_{ \R \times \T} \left \langle D_{\tilde{u}}( \hat{u}, 0,0), (w, \rho , \xi) \right \rangle_{\tilde{J}_t(\tilde{u})} \,ds\,dt>0.
\]
Thus $\rho$ also vanishes on $\Omega$. It then follows from \eqref{eq:formal adjoint} that $\xi$ is constant on $\Omega$.\\

We now complete the proof that the operator \eqref{to_show_surjective} is surjective. Suppose $\xi(s,t) = c$ for all $(s,t) \in \Omega$.  Then $(w, \rho, \xi - c)$ is another solution of \eqref{eq:formal adjoint} which vanishes on $\Omega$.  Since $(w, \rho ,\xi -c)$ has the unique continuation property \cite[Proposition 3.1]{fhs95}, it follows that $w \equiv 0$, $\rho \equiv 0$ and $\xi \equiv c$ as required. The remaining details
of the proof are absolutely standard, and use the usual Sard-Smale theorem 
 and an argument due to Taubes in order to pass from residuality in $C^{\ell}$ to residuality in $C^{\infty}$. We refer the reader to \cite[proof of Theorem 3.1.5.(ii)]{ms04}
for a detailed exposition.
\end{proof}

It remains to prove Theorem \ref{regularpoints}. Given a flow line $\tilde{u}= (u, \eta ,\zeta)$, in the forthcoming arguments it is convenient to abbreviate the first two components $(u, \eta)$ by $v$. Thus $v: \R \times \T \to M \times \R$. We will also use the notation $C(v) := \left\{ (s,t) \in \R \times \T \mid  \partial_s v(s,t) = 0\right\}$.  

\begin{rem}
Standard arguments \cite[Lemma 4.1]{fhs95} tell us that the set of points $\left\{ (s,t) \mid \partial_s \tilde{u}(s,t) = 0 \right\}$ is a discrete subset of $\R \times \T$.  In general this will not be true for the set $C(v)$. Nevertheless it is not difficult to show that the set $C(v)$ has non-empty interior in $\R \times \T$ (compare \cite[Proposition 3.3]{bo10}), and in fact one can even prove the stronger statement that the set $\left\{ (s,t) \mid \partial_s u(s,t) = 0 \right\}$ has non-empty interior in the open set $\left\{ (s,t) \in \R \times \T \mid u(s,t) \notin \crit H \right\}$. However we will not need this result in the proof of Theorem \ref{regularpoints}. 
\end{rem}	 

It will also be convenient to use the following notation. Given $s \in \R$ and $h >0 $, let $I_h(s):=[s-h,s+h]$. Similarly given $t \in \T$ we denote by $I_h(t):=[t-h,t+h]$, which for $0< h < 1/2$ defines a proper arc in $\T = \R / \Z$. Finally we write $Y_h(s,t):=I_h(s) \times I_h(t)$. We will need the following lemma in the proof of Theorem \ref{regularpoints}, which is a minor variation of \cite[Lemma 4.2]{fhs95} (compare also \cite[Lemma 4.5]{bo10}).
\begin{lem}
\label{v_0 = v_1}
Suppose $\tilde{u}_0$ and $\tilde{u}_1$ are two solutions of the Floer equation \eqref{floer}, defined on  $I_{h_0}(s_0) \times \T$ for some $h_0 > 0$. Write $\tilde{u}_j = (u_j ,\eta_j ,\zeta_j)$, and abbreviate $v_j := (u_j, \eta_j)$. Suppose there exists $t_0 \in \T$ such that
\begin{equation}
\label{v are equal}
v_0(s_0,t_0) = v_1(s_0,t_0),
\end{equation}
and
\begin{equation}
\label{assumptions on partial s}
\partial_s u_0 (s_0, t_0 ) \ne 0, \qquad \partial_s v_1(s_0,t_0)  \ne 0.
\end{equation}
Assume in addition that for all $0 < h' < h_0$ there exists $0 < h < h'$ such that for all $(s,t) \in Y_h(s_0,t_0)$ there exists $s' \in I_{h'}(s_0)$ such that
\begin{equation}
\label{agree}
v_0(s,t) = v_1(s' ,t).
\end{equation}
Then in fact $v_0 = v_1$, and hence $\tilde{u}_0$ and $\tilde{u}_1$ agree up to a constant shift in the $\R^*$ direction. 
\end{lem}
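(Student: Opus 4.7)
The plan is to produce a local reparametrisation witnessing the covering hypothesis (\ref{agree}), then use the first Floer equation in (\ref{floer}) to force this reparametrisation to be trivial, and finally propagate the resulting identity $v_0 = v_1$ to the whole cylinder by unique continuation.

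First, since $\partial_s v_1(s_0, t_0) \ne 0$ and $v_1$ is smooth, the map $s' \mapsto v_1(s', t)$ is an injective immersion on $I_{h'}(s_0)$ provided that $h'$ is small enough and $t$ is close enough to $t_0$. Combined with the hypothesis (\ref{agree}), this yields, after shrinking $h$ and $h'$, a unique smooth function $\tau : Y_h(s_0, t_0) \to I_{h'}(s_0)$ satisfying
\[
v_0(s,t) = v_1(\tau(s,t), t), \qquad \tau(s_0, t_0) = s_0.
\]
Writing $u_0(s,t) = u_1(\tau(s,t), t)$ and differentiating in $s$ at $(s_0, t_0)$, the hypothesis $\partial_s u_0(s_0, t_0) \ne 0$ forces both $\tau_s(s_0, t_0) \ne 0$ and $\partial_s u_1(s_0, t_0) \ne 0$.

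Second, substitute the identities $u_0 = u_1 \circ (\tau, \mathrm{id})$ and $\eta_0 = \eta_1 \circ (\tau, \mathrm{id})$ into the first equation of (\ref{floer}) for $\tilde{u}_0$. Setting $W := \partial_t u_1(\tau, t) - \eta_1(\tau, t) X_H(u_1(\tau, t))$ and $J := J_t(u_1(\tau, t), \eta_1(\tau, t))$, the Floer equation for $u_1$ gives $\partial_s u_1(\tau, t) = -JW$, and substituting this together with $J^2 = -I$ collapses the $u_0$-equation to
\[
(1 - \tau_s)\, JW + \tau_t\, W = 0.
\]
Near $(s_0, t_0)$ we have $\partial_s u_1(\tau, t) \ne 0$, so $W \ne 0$; since $J$ is a complex structure, $W$ and $JW$ are linearly independent over $\R$. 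Hence $\tau_s \equiv 1$ and $\tau_t \equiv 0$ on a neighbourhood of $(s_0, t_0)$. Together with $\tau(s_0, t_0) = s_0$, this gives $\tau(s, t) = s$, so that $v_0 = v_1$ on a non-empty open subset $U$ of $I_{h_0}(s_0) \times \T$.

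Third, on $U$ the last two equations in (\ref{floer}) yield
\[
\partial_s(\zeta_0 - \zeta_1) = \partial_t(\eta_0 - \eta_1) = 0, \qquad \partial_t(\zeta_0 - \zeta_1) = \bigl(H(u_0) - H(u_1)\bigr) - \partial_s(\eta_0 - \eta_1) = 0,
\]
so $\zeta_0 - \zeta_1 \equiv c$ on (a component of) $U$ for some constant $c \in \R^*$. Define $\tilde{u}_1' := (u_1, \eta_1, \zeta_1 + c)$, which is again a solution of (\ref{floer}) by the $\R^*$-invariance (\ref{Raction}). The maps $\tilde{u}_0$ and $\tilde{u}_1'$ solve the same semilinear elliptic system and coincide on the open set $U \subset I_{h_0}(s_0) \times \T$, so the standard Aronszajn-type unique continuation for the Floer equation (as in \cite[Proposition 3.1]{fhs95}, applied to the difference) propagates the identity to all of $I_{h_0}(s_0) \times \T$. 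This gives $v_0 = v_1$ everywhere, and simultaneously $\zeta_0 = \zeta_1 + c$, which is exactly the conclusion.

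The main obstacle is the rigidity argument in the second paragraph: the algebraic conclusion that $\tau_s = 1$ and $\tau_t = 0$ comes from a single vector equation, and it crucially uses that the coupling in (\ref{floer}) between $u$ and $\eta$ is compatible with a genuinely complex structure $J$, so that $W$ and $JW$ are automatically linearly independent once $W \neq 0$. The local existence and smoothness of $\tau$ (an implicit function argument) and the final unique continuation step are both standard.
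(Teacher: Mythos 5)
Your proof is correct and takes essentially the same route as the paper's: use (\ref{agree}) to build a fibrewise reparametrisation, substitute into the first Floer equation and exploit the linear independence of a nonzero vector and its image under $J$ to force the reparametrisation to be the identity, then conclude that $\zeta_0-\zeta_1$ is locally constant and propagate by unique continuation. The only difference is cosmetic: the paper inverts $\psi_t$ and substitutes $u_1 = u_0\circ\theta$ into the Floer equation for $u_1$ (so the needed nonvanishing is just the given $\partial_s u_0\neq 0$), whereas you keep $\tau=\psi_t$ and substitute $u_0 = u_1\circ(\tau,\mathrm{id})$ into the Floer equation for $u_0$, which requires the extra chain-rule observation that $\partial_s u_1(s_0,t_0)\neq 0$.
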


\begin{proof}
It follows from \eqref{assumptions on partial s} that there exists $h' >0 $  such that  for  all $0 <h < h'$ sufficiently small, both of the maps 
\[
I_h(s_0) \ni s \mapsto
\begin{cases}
 u_0(s,t), \\ 
 v_1(s,t),
 \end{cases}
  \qquad \forall \, t \in I_h(t_0),
\]
are embeddings.  Moreover by assumption there exists $h < h'$ such that for each $t \in I_h(t_0)$ there is a well defined smooth map 
\[
\psi_t : (v_1(\cdot, t))^{-1} \circ v_0( \cdot ,t) : I_h(s_0) \to I_{h'}(s_0),
\]
and for $h'' < h'$ small enough it holds in addition that for all $h < h''$ one has $s_0 \in \mathrm{im}(\psi_t)$ for each $t \in I_h(t_0)$. The implicit function theorem allows us to invert $\psi_t$, obtaining a map 
\[
\theta_t := (\psi_t)^{-1}: I_{h''}(s_0) \to I_h(s_0).
\]
These maps $\theta_t$ piece together to define a map
\[
\theta : Y_{h''}(s_0,t_0) \to Y_h(s_0,t_0)
\]
of the form $\theta(s,t) = (\phi (s,t), t)$. Thus for all $(s,t) \in  Y_{h''}(s_0,t_0) $, one has
\[
v_1(s,t)= v_0(\phi(s,t), t).
\]
We now apply this to the equation $\partial_s u_1 + J_t(u_1, \eta_1)( \partial_t u_1 - \eta_1 X_H(u_1)) =0$ to obtain
\[
\begin{split}
0 & =\partial_ s u_1 + J_t(u_1, \eta_1)( \partial_t u_1 - \eta_1 X_H(u_1)) \\
& =  \partial_s u_0(\theta) \partial_s \phi + J_t(u_0 (\theta) , \eta_0(\theta)) \partial_s u_0( \theta)  + \underset{=- J_t(u_0 (\theta) , \eta_0(\theta)) \partial_s u_0(\theta)}{\underbrace{J_t(u_0 (\theta) , \eta_0(\theta))( \partial_t u_0( \theta) - \eta_0(\theta) X_H(u_0(\theta)))}}   \\
& = \partial_s u_0(\theta) (\partial_s \phi - 1) + J_t(u_0 (\theta) , \eta_0(\theta)) \partial_s u_0(\theta) \partial_t \phi.
\end{split}
\]
By assumption $\partial_s u_0(\theta) \ne 0$, and hence the vectors $ \partial_s u_0(\theta)$ and $J_t(u_0 (\theta) , \eta_0(\theta))\partial_s u_0(\theta)$ are linearly independent. This implies $\partial_s \phi = 1$ and $\partial_t \phi = 0$. Since $\phi(s_0,t_0) = s_0$, we see that $\phi(s,t) = (s,t) $ for all $(s,t) \in Y_{h''}(s_0,t_0)$. Thus $v_0 = v_1$ on $Y_{h''}(s_0,t_0)$. It now follows directly from the Floer equation \eqref{floer} that there exists a constant $c \in \R^*$ such that $\zeta_1(s,t) = \zeta_0(s,t)+c$ on  $Y_{h''}(s_0,t_0)$.  Finally, define $\tilde{u}_2 : I_h(s_0) \times \T \to \tilde{M}$ by $\tilde{u}_2(s,t) = (u_1(s,t), \eta_1(s,t), \zeta_1(s,t) - c)$. Then $\tilde{u}_0$ and $\tilde{u}_2$ are both solutions of \eqref{floer} that agree on $Y_{h''}(s_0,t_0)$. By unique continuation \cite[Proposition 3.1]{fhs95} it follows that $\tilde{u}_0 = \tilde{u}_2$ on all of $I_h(s_0) \times \T$. This completes the proof.
\end{proof} 

We now prove Theorem \ref{regularpoints}. The proof is very similar to the proof of Theorem 4.3 in \cite{fhs95}, however there are some minor but important differences, and hence for the convenience of the reader we give a complete proof. 

\begin{proof}[Proof of Theorem \ref{regularpoints}]
Fix a solution $\tilde{u}= (u ,\eta, \zeta):\R \times \T \to \tilde{M}$ of \eqref{floer}, and let $\tilde{x}^{\pm}=(x^\pm, \tau^\pm, \sigma^\pm)$ denote the asymptotes of $\tilde{u}$. Abbreviate $v = (u,\eta)$. First note that since by assumption $\partial_s \tilde{u}$ is not identically zero, the set $\left\{ (s,t) \in \R \times \T \mid \partial_s u (s,t) \ne 0\right\}$ is a non-empty open set.

Let us first check that $\mathcal{R}(v)$ is open. It is clear we need only exclude the following situation: the existence of a sequence $(s_k,t_k) \in \R \times \T$ converging to some $(s_0,t_0) \in \mathcal{R}(v)$, together with another sequence $s_k ' \ne s_k$ such that $v(s_k,t_k)= v(s_k' ,t_k)$ for each $k$. Since $\partial_s v(s_0,t_0) \ne 0$, there exists $h> 0 $ such that for all $t \in I_h(t_0)$, the map $s \mapsto v(s,t)$ is an embedding for $s \in I_h(s_0)$. Thus for all $k$ sufficiently large the map $I_h(s_k) \ni s \mapsto v(s,t_k)$ is also an embedding. If (up to a subsequence) we had $s_k' \to \pm \infty$ one would obtain $v(s_0,t_0) = (x^\pm(t_0),\tau^\pm)$, contradicting the fact that $(s_0,t_0) \in \mathcal{R}(v) $, and similarly if $s_k ' \to s_0$ then one would have $s_k' \in I_h(s_k)$ for all $k$ sufficiently large, which contradicts the fact that $I_h(s_k) \ni s \mapsto v(s,t_k)$ is an embedding. Thus up to passing to a subsequence, we may assume that $s_k' \to s_0' \in \R$ for some $s_0' \ne s_0$. But then $v(s_0,t_0) = v(s_0' ,t_0)$, which contradicts the assertion that $(s_0,t_0) \in \mathcal{R}(v)$. Thus $\mathcal{R}(v)$ is open, as desired.

We now prove that $\mathcal{R}(v)$ is dense in $\left\{ (s,t) \in \R \times \T \mid \partial_s u (s,t) \ne 0\right\}$. Fix $(s_0,t_0) \in \R \times \T$ such that $\partial_s u(s_0,t_0) \ne 0$, and fix $h > 0 $ such that $\partial_s u$ is never zero on $Y_h(s_0,t_0)$, and such that for all $t \in I_h(t_0)$, the map $I_h(s_0) \ni s \mapsto u(s,t)$ is an embedding.  The proof will be accomplished via a sequence of claims.

\textbf{Claim 1:} Every such point $(s_0,t_0)$ can be approximated by a sequence $(s_k,t_k)$ such that $v(s_k,t_k) \ne (x^\pm(t_k), \tau^\pm)$.

\emph{Proof of Claim 1:} Take $s_k = s_0 + 1/k$ and $t_k = t_0$. If $v(s_k,t_k) = (x^\pm(t_k),\tau^\pm)$ for all $k$ then necessarily we have $\partial_s v(s_0,t_0 ) = 0 $, which contradicts the assertion that $\partial_s u (s_0 ,t_0 ) \ne 0$.

By the claim we may therefore assume that without loss of generality for all $(s,t) \in Y_h(s_0,t_0)$, one has $v(s,t) \ne (x^\pm(t), \tau^\pm)$. Assume now for contradiction there exists $0< \varepsilon <h $ such that $Y_{\varepsilon}(s_0,t_0) \cap \mathcal{R}(v) = \emptyset$. Thus for all $(s,t) \in Y_ \varepsilon(s_0,t_0)$ there exists $s' \ne s$ such that $v(s',t) = v(s,t)$. Arguing as in the proof above that $\mathcal{R}(v) $ is open, we see that there exists a uniform $T > 0 $ such that any such $s'$  satisfies $|s' | \le T$. 

\textbf{Claim 2:} There exists a sequence $s_k \to s_0$ such that if $s' \ne s_k$ satisfies $v(s',t_0) = v(s_k,t_0)$ then $(s',t_0) \notin C(v)$.

\emph{Proof of Claim 2 (Following \cite[p1206]{bo10}):} We may choose a chart $U \subset M \times \R$ around $v(Y_ \varepsilon (s_0,t_0))$ of the form $I_ \varepsilon(s_0) \times \R^{2n}$, such that (expressed in this chart), the map $v : V := v^{-1}(U) \to I_ \varepsilon(s_0) \times \R^{2n}$ is given by $(s,t) \mapsto (f(s,t), g(s,t))$, where the restriction of $f$ to $Y_ \varepsilon (s_0,t_0) \subset V$ is simply given by $f(s,t) = s$. Define $m(s) := f(s,t_0)$. Thus if $(s,t_0) \in V$ belongs to $C(v)$ then $m'(s) = 0$. In particular this is true for $s \in I_ \varepsilon (s_0)$. Thus
\[
 \left\{ s \in I_ \varepsilon (s_0) \mid  \exists \, (s',t_0) \in V \cap C(v) \text{ with } v(s',t_0 ) = v(s,t_0) \right\} \subset \mathrm{Spec}(m).
 \] 
 The set $\mathrm{Spec}(m)$ of critical values of $m$ is nowhere dense by Sard's Theorem. This proves the claim.

Replacing $s_0$ with $s_k$ for $k$ sufficiently large, we may therefore assume that for each $s' \in \R$ such that $v(s',t_0) = v(s_0,t_0)$, one has $(s', t_0) \notin C(v)$.

 \textbf{Claim 3:} After shrinking $\varepsilon$ if necessary, we may assume that 
 \[
 \forall\,(s,t) \in Y_ \varepsilon(s_0,t_0), \ \forall \, s' \in \R, \quad v(s,t ) = v(s',t) \ \then \ (s',t) \notin C(v).
 \]
 \emph{Proof of Claim 3:} If this failed then we could find a sequence $(s_k,t_k)$ converging to $(s_0,t_0)$ and a sequence $(s_k ')$ such that $s_k'$ is bounded away from $s_0$, and such that $(s_k ', t_k) \in C(v)$ and $v(s_k,t_k) = v(s_k' , t_k)$ for all $k \in \N$. As above, such a sequence $(s_k') $ is necessarily bounded, and hence up to passing to a subsequence we may assume $s_k' \to s_0 ' \ne s_0$. Then $(s_0', t_0) \in C(v)$ and $v(s_0',t_0) = v(s_0, t_0)$, which contradicts the line immediately before the claim.

 For any $(s,t) \in Y_ \varepsilon(s_0,t_0)$, there are certainly at most finitely many points $s' \in \R$ such that $v(s,t) = v(s',t)$. Indeed, if not then there would exists an accumulation point $s''$ of such $s'$ (since as we have already noted, all such $s'$ are uniformly bounded). Then we would have $v(s,t) = v(s'',t)$ with $(s'',t) \in C(v)$, which contradicts the last claim. Therefore let us let $s_1, \dots , s_p$ denote the finitely many points such that $v(s_j,t_0) = v(s_0,t_0)$ for $j = 1, \dots p$. 

 \textbf{Claim 4:} For any $\delta >0 $ there exists $\gamma > 0$ such that if $(s,t) \in Y_{2 \delta'}(s_0,t_0)$ then there exists $1 \le j \le p$ and $(s',t) \in Y_ \delta(s_j, t_0)$ such that $v(s,t) = v(s',t)$. 

 \emph{Proof of Claim 4:} If the claim failed we could find $\delta > 0$ and a sequence $(s_k,t_k) \to (s_0,t_0)$ such that $v(s_k,t_k) \ne v(s',t_k)$ for all $s' \in I_ \delta (s_j)$, for all $1 \le j \le p$. Since by assumption for all $k$ there exists \emph{some} $s_k' \in [-T,T]$ such that $v(s_k,t_k) = v(s_k', t_k)$, by taking a limit we find a point $s'_0$ distinct from $s_1, \dots , s_p$ such that $v(s'_0,t_0) = v(s_0,t_0)$. This is a contradiction.

 This implies that for $\gamma >0 $ sufficiently small, 
 \[
 \overline{Y}_ \gamma (s_0,t_0) = \bigcup_{j=1}^p V_j, 
 \]
 where
 \[
 V_j := \left\{ (s,t) \in \overline{Y}_ \delta (s_0,t_0) \mid \, \exists \, (s',t) \in \overline{Y}_ \delta (s_j,t_0) \text{ with } v(s',t) = v(s,t)\right\}.
 \]
 By the Baire Category theorem, at least one of the sets $V_j$ has non-empty interior. Let us suppose $V_1^\circ \ne \emptyset$. Fix $(\hat{s}, \hat{t}) \in V_1^\circ$, and let $(\hat{s}_1, \hat{t})$ denote the unique point in $Y_ \delta(s_1, t_0)$ such that $v(\hat{s},\hat{t}) = v(\hat{s}_1, \hat{t )}$. Choose $0< \delta' < \delta$ such that $Y_{ \delta'}(\hat{s}_1, \hat{t}) \subset Y_{\delta}(s_1, t_0)$ and choose $0 < \gamma' < \gamma$ such that $Y_{\gamma'}(\hat{s},\hat{t}) \subset V_1$. Then by assumption, for all $0 < h' < \delta'$, there exists $0 < h <\gamma'$ such that for all $(s,t) \in Y_h(\hat{s},\hat{t})$, there exists $(s',t) \in Y_{h'}(\hat{s}_1, \hat{t})$ such that $v(s,t) = v(s',t)$. Thus if we define $v_1(s,t) := v( s +\hat{s}_1 - \hat{s},t)$ we may apply Lemma \ref{v_0 = v_1} to obtain $v = v_1$ on all of $\R \times \T$. But then for any $(s,t) \in \R \times \T$, we have  
 \[
 v(s,t) = \lim_{s \to \pm \infty}v(s + k( \hat{s}_1 - \hat{s}),t)= (x^\pm(t), \tau^\pm),
 \]
 which contradicts the fact that $\partial_s v$ is not identically zero.
\end{proof}

\begin{rem}
\label{still_works_on_half_cylinder}
Note that this proof continues to work if our map $\tilde{u}$ is only defined on a half-cylinder $\R^\pm \times \T$. Thus for such a map $\tilde{u}$, if $\partial_s \tilde{u} $ is not identically zero then the set $\mathcal{R}(u, \eta)$ is an open dense subset of the non-empty open set  $\left\{ (s,t) \in \R^\pm \times \T  \mid \partial_s u(s,t) \ne 0 \right\}$.
\end{rem}

\subsection{Transversality in Rabinowitz Floer homology}
\label{rfh_trans}

In this section we show how transversality can be achieved in Rabinowitz Floer homology using almost complex structures $\mathrm{J} \in \mathcal{J}_{\mathrm{con}}$. The starting point is again the following analogue of the definition of a regular point. 
\begin{defn}
\label{regular_points_RF}
Let $v = (u, \eta)$ denote a solution of \eqref{eq:RFH equations}, and suppose that 
\[
\lim_{s \to \pm \infty} v(s, \cdot) =   \hat{x}^\pm( \cdot) = (x^{\pm}(\cdot), \tau^{\pm}).
\] 
We denote by 
\[
\mathcal{R}(v):  =\left\{ (s,t)\in\mathbb{R}\times\mathbb{T}\Biggl|\begin{array}{c}
\partial_{s}u(s,t) \ne 0, \quad \partial_s \eta(s) \ne 0 \\
v(s,t) \ne \hat{x}^{\pm}(t) \\
v(s,t) \ne v(s',t) \quad \forall\, s' \in \R \setminus \{s \}.
\end{array}\right\} .
\]
\end{defn}
Then the following result holds. The proof is almost word-for-word identical as the proof of Theorem \ref{regularpoints}, and so we will not give it here. It is also very similar to the proof of \cite[Proposition 4.3]{bo10}.
\begin{thm}
\label{regularpoints_RF}
Assume that $\partial_s u $ is not identically zero. Then the set $\mathcal{R}(v)$ is an open dense subset of the non-empty open set  $\left\{ (s,t)  \mid \partial_s u(s,t) \ne 0 \right\}$.
\end{thm}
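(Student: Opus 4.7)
The plan is to mimic, line by line, the proof of Theorem \ref{regularpoints}, with two minor modifications reflecting the differences between the two settings: in Rabinowitz Floer homology the Lagrange multiplier $\eta$ depends only on $s$, and we additionally require $\partial_s \eta(s)\ne 0$ at a regular point.

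\emph{Openness.} Let $(s_k,t_k)\to (s_0,t_0)\in\mathcal{R}(v)$ and suppose $(s_k,t_k)\notin\mathcal{R}(v)$. Since $\partial_s u(s_0,t_0)\ne 0$, $\partial_s\eta(s_0)\ne 0$, and $v(s_0,t_0)\ne\hat{x}^\pm(t_0)$ are all open conditions, the only possibility is that there exist $s_k'\ne s_k$ with $v(s_k',t_k)=v(s_k,t_k)$. The embedding property of $s\mapsto u(s,t)$ near $(s_0,t_0)$, together with the convergence $v(s,\cdot)\to\hat{x}^\pm$ as $s\to\pm\infty$, rules out both $s_k'\to\pm\infty$ and $s_k'\to s_0$, so up to extraction $s_k'\to s_0'\ne s_0$, giving $v(s_0,t_0)=v(s_0',t_0)$, contradicting $(s_0,t_0)\in\mathcal{R}(v)$.

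\emph{Density.} Fix $(s_0,t_0)$ with $\partial_s u(s_0,t_0)\ne 0$. That the condition $v(s,t)\ne\hat{x}^\pm(t)$ holds on an arbitrarily small sequence $(s_k,t_0)$ follows exactly as in Claim~1 of the proof of Theorem \ref{regularpoints}, since $\partial_s u(s_0,t_0)\ne 0$ prevents $v$ from being asymptotically pinned at $(s_0,t_0)$. For the new condition $\partial_s\eta(s)\ne 0$, observe that $\partial_s\eta(s)=\int_\T H(u(s,t))\,dt$ is smooth in $s$; its zero set is therefore either nowhere dense---in which case one can perturb $s_0$ arbitrarily little to arrange $\partial_s\eta(s)\ne 0$---or all of $\R$, in which case $\eta$ is constant and $u$ satisfies a standard $\tau$-fixed Floer equation, and the theorem reduces to the corresponding result of \cite{fhs95} applied to $u$ (no further condition on $\eta$ is available or needed).

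\emph{Absence of double points.} Assuming $Y_\varepsilon(s_0,t_0)\cap\mathcal{R}(v)=\emptyset$ for some $\varepsilon>0$ forces every point of $Y_\varepsilon(s_0,t_0)$ to admit a distinct $s'$ with $v(s',t)=v(s,t)$. The Baire-category reduction of Claims~2--4 in the proof of Theorem \ref{regularpoints} then supplies, after shifting, a second solution $v_1(s,t):=v(s+c,t)$ with the shifted overlap hypothesis of Lemma \ref{v_0 = v_1}. The Rabinowitz analogue of that lemma---whose proof is word-for-word the same, the implicit function theorem producing a shift map $\phi(s,t)$ which the equation $\partial_s u+J_t(u,\eta)(\partial_t u-\eta X_H(u))=0$ forces to satisfy $\partial_s\phi=1$, $\partial_t\phi=0$, followed by unique continuation \cite[Proposition 3.1]{fhs95} applied to the coupled PDE--ODE system---then yields $v=v_1$ globally, forcing $v(s,t)\to\hat{x}^\pm(t)$ at the limit $s\to\pm\infty$ along the periodic orbit $s\mapsto s+kc$, which contradicts $\partial_s u\not\equiv 0$.

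\emph{Main obstacle.} The one genuinely new point is the extraction of the shift map $\phi$ in the RFH analogue of Lemma \ref{v_0 = v_1}: because $\eta$ is only $s$-dependent, the matching of the $\eta$-components under a shift gives only a scalar constraint, and one must verify that this is compatible with the $(u,\eta)$-matching forced by the implicit function theorem on the $M$-factor. Once this is in place, the combinatorial Baire-category argument in Claims~2--4 transfers without change.
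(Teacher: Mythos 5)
The paper gives no proof of this statement, only remarking that the argument is almost word-for-word that of Theorem~\ref{regularpoints} and very similar to \cite[Proposition~4.3]{bo10}, so you do need to supply a real argument; your template (openness, Claims~1--4 for density, adaptation of Lemma~\ref{v_0 = v_1}) is the right one, and the openness and no-double-point portions transfer essentially as you describe. The substantive gap is in your treatment of the extra condition $\partial_s\eta(s)\neq 0$. The zero set of a smooth function on $\R$ is in general neither nowhere dense nor all of $\R$: it can contain a whole interval while the function is nonzero elsewhere, and you offer no reason why $\eta'(s)=\int_{\T} H(u(s,t))\,dt$ should enjoy the dichotomy you assert --- this scalar function of $s$ is not a solution of any elliptic equation, so the unique continuation theory that applies to the map $u$ itself does not bear on it. If $\eta'$ vanished on an interval $[a,b]$ without vanishing identically, $\mathcal{R}(v)$ would miss the whole strip $[a,b]\times\T$ and density would fail. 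And in the other branch of your dichotomy ($\eta'\equiv 0$), the set $\mathcal{R}(v)$ is \emph{empty} by Definition~\ref{regular_points_RF}, so ``reducing to \cite{fhs95}'' proves density of a different set, not of $\mathcal{R}(v)$ as the theorem requires. In fact the statement of Theorem~\ref{regularpoints_RF} silently drops the additional hypothesis present in \cite[Proposition~4.3]{bo10} (there in the form $dq\not\equiv 0$, the analogue of $\eta'\not\equiv 0$); but even with that hypothesis restored, one still needs an actual argument that the closed set $\{s:\eta'(s)=0\}$ has empty interior, and you have not given one.

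The ``main obstacle'' you raise about the Rabinowitz analogue of Lemma~\ref{v_0 = v_1} is, by contrast, benign. Since each $\eta_j$ depends on $s$ alone, the scalar part of the matching $v_0(s,t)=v_1(s',t)$ reads $\eta_0(s)=\eta_1(s')$ and is $t$-independent; where $\eta_1'\neq 0$ this alone pins down $s'$, and where $\eta_1'$ vanishes the hypothesis $\partial_s u_1\neq 0$ makes $s\mapsto u_1(s,t)$ an embedding, so the inversion goes through on the $M$-factor exactly as in the original lemma. Spelling this out removes that worry; it is the first gap that requires a genuine fix.
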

As in the previous section, we will now use Theorem \ref{regularpoints_RF} to deduce the following result. Given $\mathrm{J} \in \mathcal{J}_{\mathrm{con}}$ and two components $K^- \ne K^+$ of $\crit \mathcal{A}_H$, let us denote by $\mathcal{M}_{RF, \mathrm{J}}(K^-,K^+)$ the space of solutions $v = (u,\eta)$ of \eqref{eq:RFH equations} whose asymptotic limits belong to $K^{\pm}$.
\begin{thm}
\label{transcylindersRF}
There exists a comeagre subset $\mathcal{J}_{\mathrm{con}}^{\mathrm{RF,reg}} \subset \mathcal{J}_{\mathrm{con}}$ such that if $\mathrm{J} \in \mathcal{J}_{\mathrm{con}}^{\mathrm{RF,reg}}$ then for every pair $K^- \ne K^+$ of components of $\crit \mathcal{A}_H$, the spaces $\mathcal{M}_{\mathrm{J}}(K^-, K^+)$ are all smooth manifolds.
\end{thm}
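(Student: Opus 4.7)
The plan is to mirror the argument given for Theorem \ref{transcylinders}, using Theorem \ref{regularpoints_RF} in place of Theorem \ref{regularpoints}. For a fixed $\ell\in \N$ and a small weight $\delta>0$, I would first set up a Banach manifold $\mathcal{B}_{RF}(K^-,K^+)$ of pairs $v=(u,\eta)$ such that $u:\R\times\T\to M$ and $\eta:\R\to\R$ are locally $W^{1,p}$ (with $p>2$) and converge exponentially, in the $W^{1,p}_{\delta}$-sense, to asymptotic limits $\hat{x}^\pm=(x^\pm,\tau^\pm)\in K^\pm$. The tangent space at $v$ then splits as a finite-dimensional factor encoding motion along $K^\pm$ plus the weighted space $W^{1,p}_{\delta}(\R\times\T,u^*(TM))\times W^{1,p}_{\delta}(\R)$. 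Over $\mathcal{B}_{RF}(K^-,K^+)$ I would form the Banach bundle $\mathcal{E}_{RF}$ with fibre $L^p_{\delta}(\R\times\T,u^*(TM))\times L^p_{\delta}(\R)$, on which the Rabinowitz Floer equations \eqref{eq:RFH equations} define a section $\bar{\partial}_{RF,\mathrm{J}}$ whose zero set is $\mathcal{M}_{RF,\mathrm{J}}(K^-,K^+)$. As in Section \ref{index_comp_general}, the vertical derivative is Fredholm on the weighted spaces.

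Next I would consider the extended section $\mathcal{F}(v,\mathrm{J}):=\bar{\partial}_{RF,\mathrm{J}}(v)$ on $\mathcal{B}_{RF}(K^-,K^+)\times\mathcal{J}_{\mathrm{con}}^{\ell}$, parametrise $T_{\mathrm{J}}\mathcal{J}_{\mathrm{con}}^{\ell}$ by $C^{\ell}$-families $Y=\{Y_t(\cdot,\tau)\}$ of $J_t(\cdot,\tau)$-anti-commuting, $\omega$-skew endomorphisms of $TM$ supported in $M_0$ and satisfying \eqref{eq:acs_bounded_Y}, and show that the vertical derivative $D_{v,\mathrm{J}}=D_v+F_{v,\mathrm{J}}$ is surjective at every zero $(v,\mathrm{J})$ of $\mathcal{F}$. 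Since $D_v$ is Fredholm, its range is closed, so it suffices to argue density: any element $(w,\rho)\in L^q_{-\delta}\times L^q_{-\delta}$ (with $1/p+1/q=1$) which annihilates the range of $D_{v,\mathrm{J}}$ must vanish. Pairing against variations $(\hat{u},\hat{\eta})$ alone shows $(w,\rho)$ is a weak, hence classical, $C^{\ell}$-solution of the formal adjoint equation $D_v^*(w,\rho)=0$. The key step is then to exploit $F_{v,\mathrm{J}}(Y)=(Y_t(u,\eta)(\partial_t u-\eta X_H(u)),0)$ to rule out $w\neq 0$, and this is where regular points come in.

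Because $K^-\ne K^+$, $v$ is non-stationary; a short argument (eliminating the degenerate case $\partial_s u\equiv 0$ as in the discussion around \eqref{eq:RFH equations}) shows that $\partial_s u$ is not identically zero, so Theorem \ref{regularpoints_RF} applies. On the open set $\Omega:=\{(s,t)\in\mathcal{R}(v)\mid u(s,t)\notin\crit H\}$, which is non-empty because $\Sigma=H^{-1}(0)$ is a regular level, I would choose $(s_0,t_0)\in\Omega$ with $w(s_0,t_0)\neq 0$ and select $Y_p:T_pM\to T_pM$ at $p=u(s_0,t_0)$, $T=\eta(s_0,t_0)$ so that $\omega_p(Y_p\partial_s u(s_0,t_0),w(s_0,t_0))>0$, extending to a compatible $Y\in T_{\mathrm{J}}\mathcal{J}_{\mathrm{con}}^{\ell}$ and localising via a bump function $\beta(t,x,\tau)$; here the regular point condition guarantees that the perturbation hits the trajectory only once, yielding a strictly positive pairing as in \cite[Remark 4.4]{fhs95}, which is a contradiction. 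Hence $w\equiv 0$ on $\Omega$, and inserting variations $\hat{u}$ supported on $\Omega$ back into $D_v^*(w,\rho)=0$ forces $\rho\equiv 0$ there too, since $\nabla H(u)\ne 0$ on $\Omega$. Unique continuation \cite[Proposition 3.1]{fhs95} then propagates the vanishing to all of $\R\times\T$.

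Having proved universal surjectivity, the usual implicit function theorem makes $\mathcal{F}^{-1}(0)$ a Banach manifold, and Sard--Smale applied to the projection to $\mathcal{J}_{\mathrm{con}}^{\ell}$ yields a comeagre subset of $C^{\ell}$-regular structures for each pair $(K^-,K^+)$; intersecting the countably many such sets and upgrading from $C^{\ell}$ to $C^{\infty}$ by the Taubes trick (see \cite[proof of Theorem 3.1.5.(ii)]{ms04}) produces the desired $\mathcal{J}_{\mathrm{con}}^{\mathrm{RF,reg}}$. The main obstacle is Step 3, because the restricted deformation space $T_{\mathrm{J}}\mathcal{J}_{\mathrm{con}}^{\ell}$ allows variations depending only on $(t,x,\tau)$ and vanishing outside $M_0$, so without access to localising perturbations at regular points one could not separate $w$ from zero; Theorem \ref{regularpoints_RF} is precisely what makes this work.
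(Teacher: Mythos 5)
Your proposal is correct and follows essentially the same route as the paper's proof: set up the Banach manifold $\mathcal{B}(K^-,K^+)$ with weighted $W^{1,p}_\delta$-spaces and the bundle with $L^p_\delta$-fibres, consider the universal section over $\mathcal{B}(K^-,K^+)\times\mathcal{J}^{\ell}_{\mathrm{con}}$, show the annihilator $(w,\rho)$ of the range of $D_{v,\mathrm{J}}$ vanishes by perturbing $\mathrm{J}$ at regular points (Theorem \ref{regularpoints_RF}) to kill $w$ and then using $dH\neq 0$ on $\Omega$ to kill $\rho$, invoke unique continuation, and finish with Sard--Smale and the Taubes argument to pass from $C^{\ell}$ to $C^{\infty}$. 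No substantive differences or gaps; you also correctly note that in the Rabinowitz setting there is no $\sigma$-component, so no analogue of the codimension-one subbundle $\mathcal{E}\subset\tilde{\mathcal{E}}$ is needed.
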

The functional setting we work in is similar to the previous section. Let $\mathcal{B}(K^-,K^+)$ denote the space of maps $v = (u ,\eta)$, where $u : \R \times \T \to M$ and $\eta : \R \to \R$, and such that $v$ is locally of class $W^{1,p}$ (for some $p >2$), $\lim_{s \to \pm \infty} v(s, \cdot) \in K^{\pm}$, and finally such that $v$  belongs to $W^{1,p}_{\delta}$ in local charts near the asymptotes (i.e. the analogue of \eqref{the_asymptotic_limits} holds). Let $\mathcal{E} \to \mathcal{B}(K^-,K^+)$ denote the Banach bundle whose fibre over $v$ is given by 
\[
 	\mathcal{E}_v = L^p_{\delta}(\R \times \T, u^*(TM)) \oplus L^p_{\delta}(\R , \R),
\] 
and consider the map
\[
	\mathcal{F}: \mathcal{B}(K^-,K^+ ) \times \mathcal{J}^{\ell}_{\mathrm{con}} \to \mathcal{E},
\]
given by
\[
	\mathcal{F}(u,\eta, \mathrm{J}) = \left(\begin{array}{c}
	\partial_{s}u+J_t( u, \eta)\left( \partial_{t}u-\eta X_{H}(u) \right)\\
	\partial_{s}\eta - \int_{\T} H(u)\,dt
\end{array}\right).
\]
As before, we need to show that for every zero $(v,\mathrm{J})$ of $\mathcal{F}$, the vertical derivative $D_{v, \mathrm{J}} $ of $\mathcal{F}$, which is a map
\[
D_{v,\mathrm{J}}:T_v\mathcal{B} (K^-, K^+) \times T_{\mathrm{J}}\mathcal{J}^\ell \to \mathcal{E}_{\tilde{u}},
\]
is a surjective operator. We can identify
\[
T_{v}\mathcal{B}(K^-, K^+) =  W^{1,p}_{\delta}( \R \times \T , u^*(T M) ) \times W^{1,p}_{\delta}(\R, \R) \times  \R^{\dim K^- + \dim K^+},
\]
and this time we identify $T_{\mathrm{J}}\mathcal{J}^{\ell}_{\mathrm{con}}$ with the space of $C^{\ell}$ maps  $Y : \T \times \R \to \Gamma(\mathrm{End}(TM))$ satisfying
\[
\begin{split}
 \omega(Y u ,v) + \omega(u,Yv) &=0, \quad \forall \, u,v \in T \tilde{M}, \\
 J_t(x ,\tau)Y_t(x, \tau)+ Y_t(\tau, x) J_t(\tau, x) &= 0, \quad \forall \, (t,\tau, x) \in \T \times \R \times M.
 \end{split}
\]
and finally such that $Y_t(x,\tau)= 0 $ for $x \in M \setminus M_0$, and 
\[
	\sup_{\tau \in \R } \| Y_t(\cdot , \tau) \|_{C^{\ell}} < + \infty.
\]
As before, it suffices to show that the restriction of $D_{v, \tilde{J}}$ to $W^{1,p}_{\delta}( \R \times \T , u^*(T M) ) \times W^{1,p}_{\delta}(\R, \R) \times T_{\mathrm{J}}\mathcal{J}_{\mathrm{con}}^\ell$ is surjective. Continuing to denote this restriction by $D_{v,\mathrm{J}}$, it can be written as
\[
D_{v,\mathrm{J}}(\hat{u}, \hat{\eta}, Y)=D_{v}(\hat{u}, \hat{\eta})+F_{v,\mathrm{J}}(\tilde{Y}),
\]
where  $D_v$ is the operator which case sends a tangent vector $(\hat{u}, \hat{\eta}) $ to 
\[
\left(\begin{array}{c}
	\nabla_{s}\hat{u}+J_t(  u,\eta)\left(\nabla_{t}\hat{u}-\eta\nabla_{\hat{u}}X_{H}(u)\right)+(\nabla_{\hat{u}}J + \hat{\eta} \partial_{ \eta} J  ) (\partial_{t}u-\eta X_{H}(u)) -\hat{\eta}J_t( u, \eta)X_{H}(u)\\
	\partial_{s}\hat{\eta} - \int_{\T} dH(u)[\hat{u}]\,dt
\end{array}\right),
\]
and the map $F_{v,\mathrm{J}}$ is defined by 
\[
F_{\tilde{u},\mathrm{J}}(Y):=\left(\begin{array}{c}
	Y_t(u, \eta)(\partial_{t}u-\eta X_{H}(u))\\
	0
\end{array}\right).
\]
The operator $D_v$ is Fredholm, and hence its range is closed. As above, it therefore suffice to show that the annhilator of its image is zero. Fix $q>1$ such
that $1/p+1/q=1$. The dual space $( \mathcal{E}_v)^*$ is given by
\[
(\mathcal{E}_v)^* = L^q_{-\delta}(\R \times \T , u^*(TM)) \times L^q_{-\delta}(\R, \R).
\]
We must show that if $(w, \rho ) \in (\mathcal{E}_v)^* $ has the property that for all $( \hat{u}, \hat{\eta},Y) \in W^{1,p}_{\delta}( \R \times \T , u^*(TM))  \times W^{1,p}_{\delta}(\R, \R) \times T_{\mathrm{J}}\mathcal{J}_{\mathrm{con}}^{\ell}$
one has 
\[
\int_{ \R \times \T} \left \langle D_{v, \mathrm{J}}( \hat{u}, \hat{\eta},Y), (w, \rho) \right \rangle_{J_t(u,\eta)} \,ds\,dt = 0,
\]
then $w$ and $\rho$ are identically zero. Taking $Y=0$, we see that $(w, \rho)$ is a weak solution to the equation $D_{v}^* (w, \rho)=0$, and hence by elliptic regularity they are of class $C^{\ell}$. 
Define 
\[
\Omega:=\left\{ (s,t)\in \mathcal{R}(v)\mid u(s,t)\notin \crit H\right\} .
\]
Since $K^- \ne K^+$, the set $\mathcal{R}(v)$ is open and dense in the open non-empty set of points $\left\{ (s,t) \mid \partial_s u (s,t) \ne 0 \right\}$ by Theorem \ref{regularpoints}. Moreover since $\Sigma = H^{-1}(0)$ is a regular energy level set and $\lim_{s \to \pm \infty}u(s, \cdot) \in \Sigma$, it follows that $\Omega$ is again an open non-empty set. Note also that since we assume $H$ is constant on $M \backslash M_0$, one has $M\backslash M_0 \subset \crit H$, and thus we are free to perturb $\mathrm{J}$ on $\Omega$ (recall outside of $M_0$ all elements of $\mathcal{J}_{\mathrm{con}}^{ \ell}$ are required to depend only on $x$ and be of contact type).  Finally, arguing exactly as in the proof of Theorem \ref{transcylinders}, we see that on $\Omega$ both $w$ and $\rho$ vanish. This completes the proof.

\subsection{The index of the hybrid problem}
\label{newindexcomp}

In this Section we compute the index of the hybrid problem $\mathcal{M}_{\Phi}(K; \Lambda)$ used to define the chain map $\Phi : RF(H ,f) \to F(\tilde{H },f)$.
\begin{thm}
\label{newindexcompthm}
The space $\mathcal{M}_{\Phi}(K ;\Lambda)$ has virtual dimension
\[
\dim \mathcal{M}_{\Phi}(K;\Lambda) =  \mu(K) + \dim K- \mu(\Lambda).
\]
\end{thm}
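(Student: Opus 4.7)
The plan is to realize $\mathcal{M}_{\Phi}(K;\Lambda)$ as the zero set of a Fredholm section of a Banach bundle over a Banach manifold of pairs of half-maps satisfying the coupling conditions, and then to compute the index of the linearization at an arbitrary zero.

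First I would fix $p>2$ and a small exponential weight $\delta>0$ (so that the asymptotic operators at the limits in $K$ and $\Lambda$ become bijective after twisting by $\vartheta_{\delta}$, as in Section \ref{index_comp_general}). I would then define the Banach manifold $\mathcal{B}_{\Phi}(K;\Lambda)$ of pairs $(v,\tilde{u})$, where $v=(u^-,\eta^-):(-\infty,0]\times\T\to M\times\R$ and $\tilde{u}=(u^+,\eta^+,\zeta^+):[0,+\infty)\times\T\to\tilde{M}$ are locally of class $W^{1,p}$, have $W^{1,p}_{\delta}$-decay to elements of $K$ and $\Lambda$ at the infinite ends (in the sense of equation (\ref{the_asymptotic_limits})), and satisfy the coupling conditions (\ref{eq:loop coupling})--(\ref{eq:eta coupling}) at $s=0$. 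The associated Banach bundle $\mathcal{E}_\Phi$ has fibres consisting of pairs of $L^p_\delta$-sections of $v^*(T(M\times \R))$ and $\tilde{u}^*(T\tilde{M})$, and carries a section whose two components are the Rabinowitz negative gradient operator and the extended Floer operator (\ref{eq:dbar}); its zero set is $\mathcal{M}_{\Phi}(K;\Lambda)$.

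The linearisation $D_{(v,\tilde{u})}$ is Fredholm by the standard Riemann--Hilbert theory for Cauchy--Riemann-type operators on a strip, since the linearized coupling at $s=0$ defines a totally real Lagrangian-type subspace in the product of boundary loop spaces: the two $u^{\pm}$-components are pointwise identified, $\xi^+_\eta(0,\cdot)$ is forced to be the constant loop equal to $\xi^-_\eta(0)$, and $\xi^+_\zeta(0,\cdot)$ is left free. To compute the index I would adapt the argument of Theorem \ref{virdim_cylinders}: after trivialising along $v$ and $\tilde{u}$ and passing to the weighted Sobolev setting, the left half-cylinder contributes a term depending on $\mu_{\mathrm{rs}}(\bar{\Gamma}_{\hat{x}^-})$ with $\bar{\Gamma}_{\hat{x}^-}$ as in (\ref{path_gamma_bar}), while the right half-cylinder contributes a term depending on $\mu_{\mathrm{rs}}(\Gamma_{\tilde{x}^+})$ with $\Gamma_{\tilde{x}^+}$ as in (\ref{path_gamma}). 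The block decomposition (\ref{the symplectic matrices}) together with the vanishing (\ref{index_is_zero}) of $\mu_{\mathrm{rs}}(\Theta^{\pm})$ collapses the right-hand block to its contact part, and the identities (\ref{rs_index}), (\ref{muK}) and (\ref{rel-indices}) from Section \ref{grading-rfh-sec} assemble the pieces into the claimed value $\mu(K)+\dim K-\mu(\Lambda)$.

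The main technical hurdle will be the bookkeeping of the boundary contribution at $s=0$, which does not appear in Theorem \ref{virdim_cylinders}. The coupling is asymmetric: the $M$-part identifies two loops pointwise, the $\eta$-part imposes the extra condition that $\xi^+_\eta(0,\cdot)$ be constant (giving one nontrivial matching condition plus an infinite family of ``constant loop'' constraints), and the $\zeta$-part is unconstrained. A careful linear gluing argument is needed to match the dimension of this totally real subspace in the product of the two boundary loop spaces with the contributions coming from the Robbin--Salamon indices at the asymptotes; once this is done, the index formula is a routine calculation. The Fredholm property and the generic transversality statement (away from stationary solutions, which are handled by the automatic transversality Lemma \ref{autotrans}) then follow by direct adaptation of the arguments of Section \ref{transversality for cylinders}, using that $\mathcal{R}(u^{\pm},\eta^{\pm})$ is dense on each half-cylinder by Remark \ref{still_works_on_half_cylinder}.
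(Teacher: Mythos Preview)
Your setup of the Banach manifold and the identification of the coupling conditions as a boundary condition are correct in spirit, but the approach you propose for the index computation has a genuine gap: the left half of the problem is \emph{not} a Cauchy--Riemann-type operator, and ``standard Riemann--Hilbert theory for Cauchy--Riemann-type operators on a strip'' does not apply to it. The Rabinowitz linearisation on $(-\infty,0]$ sends $(\hat u^-,\hat\eta^-)$ to a pair whose first component is a first-order elliptic operator in $(s,t)$ but whose second component is the \emph{ordinary} differential operator $\partial_s\hat\eta^- - \int_{\T} dH(u^-)[\hat u^-]\,dt$, with a nonlocal coupling term. The resulting operator is not of CR type, and the usual Riemann--Hilbert package (asymptotic paths in $\mathrm{Sp}(2m)$, boundary Maslov index, etc.) does not produce an index formula for it. As the paper notes explicitly, the existing index computations for Rabinowitz Floer problems (in \cite{cf09,bo13,mp11}) all go through the spectral flow and therefore require the domain to be a full cylinder; they are not available on half-cylinders, which is exactly the situation here.

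The paper's resolution is quite different from what you outline. One first \emph{doubles} the picture by combining $u^-$ with the reversed $(u^+,\eta^+,\zeta^+)$ into a single map $w:(-\infty,0]\times\T\to M\times M\times T^*\R$; this \emph{is} a genuine CR-type problem with the Lagrangian boundary condition $w(0,\cdot)\in\Delta_M\times\{\eta^-(0)\}\times\R^*$, but it is still coupled to the ODE for $\eta^-$. The key step is then a homotopy of Fredholm operators that deforms the coupling between $w$ and $\eta^-$ to zero while simultaneously inserting a constant $c(\theta)$ into the ODE, chosen so that the ``regularity'' condition of \cite[Definition~2.1]{mp11} holds throughout. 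At the end of the homotopy the operator splits as $D'\times D''$: the index of the CR half $D'$ is computed via the boundary-value index formula of \cite[Theorem~5.24]{as10} (the correction term $k$ vanishes), and the index of the scalar ODE $D''$ depends on the sign of $c(1)$. Finally, \cite[Theorem~1.10]{mp11} is invoked to convert the Robbin--Salamon index of the contact path $W_1$ together with $\mathrm{sgn}\,c(1)$ into $\mu(K)$. The ``bookkeeping of the boundary contribution'' you mention is indeed done, but it is not the main difficulty; the decoupling homotopy and the appeal to \cite{mp11} are the essential missing ingredients in your proposal.
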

\begin{proof}
The proof of the above result uses the fact that the coupling condition \eqref{eq:loop coupling} and \eqref{eq:eta coupling} can be seen as a Lagrangian boundary condition for the map 
\[
\begin{split}
w & : ( - \infty, 0] \times \T   \to M \times M \times T^*\R, \\
w(s,t) & := \left( u^-(s,t), u^+( -s ,t), \eta^+(- s, t), \zeta^+(-s,t) \right).
\end{split}
\]
Indeed, the tuple $(u^-,\eta^-,u^+,\eta^+,\zeta^+)$ belongs to $\mathcal{M}_{\Phi}(K;\Lambda)$ if and only if $w$ satisfies a Floer-type equation with respect to the symplectic form $\omega \times ( - \tilde{\omega})$ coupled with an ODE for $\eta^-$, together with the boundary condition
\begin{equation}
\label{bdry}
w(0,t) \in \Delta_M \times \{\eta^-(0)\} \times \R^*,
\end{equation}
which is of Lagrangian type, and suitable asymptotic conditions. By this observation, the proof that $\mathcal{M}_{\Phi}(K ; \Lambda)$ is a Fredholm problem becomes standard and we do not present it here. Nevertheless, the index computation is complicated by the coupling of the Floer-type equation for $w$ with the ODE for $\eta^-$. Previous arguments \cite{cf09,bo13,mp11} for computing the index for Rabinowitz Floer problems have all made use of the spectral flow, and as such do not immediately apply in the situation we work with here, since our maps are defined on half-cylinders.

We will compute the index in the more difficult case where $K \ne \Sigma \times \{0 \}$ is not the set of constants. The case $K =\Sigma \times \{ 0 \}$ is easier and left to the reader. Moreover to slightly simplify the calculations that follow we will assume that the component $K$ is diffeomorphic to $\T$, and in addition that, writing $K = \left\{ (x( t + r) ,\tau) \mid  r \in \T \right\}$, $K$ is \emph{transverally non-degenerate}, which means that the algebraic multiplity of the eigenvalue $1$ of $d \phi_{X_H}^\tau(x(0))$ is exactly two.\\

As a first step, note that as far as the index computation is concerned, the precise form of the coupling condition \eqref{eq:eta coupling} is unimportant. Indeed, given $\kappa\in \R$, instead of requiring that \eqref{eq:eta coupling} holds, we could instead require that 
\[
\kappa  \eta^-(0) = \eta^+(0, t), \qquad \forall \, t \in \T.
\]
Any two such choices of $\kappa$ give rise to homotopic Fredholm problems, which therefore have the same index.  In particular, we may take $\kappa = 0$. This means that $\eta^-(0)$ is now a free boundary condition, and the loop $w(0,t)$ from \eqref{bdry} belongs to the standard Lagrangian subspace 
\begin{equation}
\label{newbdy}
w(0,t) \in \Delta_M \times \{ 0 \} \times \R^*.
\end{equation}
Let us begin by formulating an appropriate local model for the problem. Let 
\[
S \in W^{1, \infty}( (-  \infty,0] \times \T , \mathrm{L}(\R^{4n+2}))
\] 
denote a path of matrices extending to the compactification $[- \infty,0] \times \T$ in such a way that
\[
\lim_{s_0 \rightarrow- \infty}\underset{(s,t)\in(- \infty, s_0)\times\T} {\mathrm{ess\, sup}}\left(\left|\partial_{s}S(s,t)\right|+\left|\partial_{t}S(s,t)-\partial_{t}S(- \infty,t)\right|\right)=0.
\]
Assume in addition that the limit matrices $S(- \infty, t)$ can be writen as
\[
S(- \infty, t) = (S_1(t), S_2(t)) \in \mathrm{Sym}(\R^{2n}) \times \mathrm{Sym}(\R^{2n + 2}).
\] 
We denote by $W_1$ and $W_2$ the corresponding symplectic paths $W_1 : [0, 1] \to \mathrm{Symp}(\R^{2n}, \omega_n)$ and $W_2 : [0,1] \to \mathrm{Symp}(\R^{2n+2}, - \omega_n \times \omega_1)$, defined by 
\[
W_1'(t) = J_n S_1(t) W_1(t), \quad W_1(0) = I_n, \qquad W_2'(t) = ( -J_n \times J_1)S_2(t) W_2(t), \quad W_2(t) = I_{n+1}.
\]
Denote by $\mu_{\mathrm{rs}}(W_j) \in \tfrac{1}{2}\Z$ the Robbin-Salamon index of these paths. Next, let  $\beta \in W^{1, \infty}( (- \infty, 0] \times \T , \R^{2n})$ denote a vector-valued path extending to the compactification $[- \infty,0] \times \T$ in such a way that
 \[
\lim_{s_0 \rightarrow- \infty}\underset{(s,t)\in(- \infty, s_0)\times\T} {\mathrm{ess\, sup}}\left(\left|\partial_{s}\beta(s,t)\right|+\left|\partial_{t}B(s,t)-\partial_{t}\beta(- \infty,t)\right|\right)=0.
\]
Define $A_1 : = J_n \partial_t +S_1$, viewed as an unbounded linear operator
\[
A_1 : W^{1,2}(\T, \R^{2n}) \to L^2(\T, \R^{2n}),
\]
and abbreviate $\beta^-(t) := \beta(- \infty,t)$. Given a  number $c \in \R$, we says that the tuple $(A_1, \beta^-, c) $ is \emph{regular} if $\beta^- \in W^{1,2}(\T, \R^{2n}) \cap \mathrm{range}(A_1)$, and if $v \in W^{1,2}(\T, \R^{2n})$ is any vector such that $A_1v = \beta^-$, then the real number 
\begin{equation}
\label{lambda_A,beta}
\lambda_{A_1,\beta^-} := \int_{\T} \left\langle v(t), \beta^-(t) \right\rangle \,dt \ne c
\end{equation}
is not equal to $c$. This definition was introduced in \cite[Definition 2.1]{mp11} and is a minor variation on the earlier definition given by Cieliebak and Frauenfelder in \cite[Definition C.3]{cf09}.  
The number $\lambda_{A_1,\beta^-}$ is well-defined since $A_1$ is self-adjoint, see \cite[Definition 2.1]{mp11}.  We assume that the tuple $(A_1, \beta^- , 0)$ is regular. This guarantees the operator $D$ we write down below in \eqref{Dtriv} is a Fredholm problem, see \cite[Appendix C]{cf09}.

Let $\iota: \R^{2n} \hookrightarrow \R^{4n+2}$ denote the inclusion $\iota(x) = (x, 0)$, and let $\pi: \R^{4n+2} \to \R^{2n}$ denote the projection onto the first $2n$ coordinates. Fix $r > 2$ and let
\[
\begin{split}
W^{1,r}_{\delta, \Delta \times 0 \times \R}((-\infty ,0] \times \T &, \R^{4n + 2}) := \\ & \left\{ w \in W^{1,r}_{\delta} ((-\infty ,0] \times \T , \R^{4n + 2})  \mid w(0,t) \in \Delta_{\R^{2n}} \times \{0\} \times \R \right\},
\end{split}
\]
where the $\delta$ indicates that we are working with weighted Sobolev spaces (as our asymptotic operators will not be bijective).
With these preparations out the way, the approrpriate local model for our problem is the operator
\[
\begin{split}
D :W^{1,r}_{\delta, \Delta \times 0 \times \R}((-\infty ,0] \times \T , \R^{4n + 2}) &\times W^{1,r }_{\delta}(( - \infty, 0], \R) \longrightarrow \\
&L^r_{\delta}((- \infty,0] \times \T , \R^{4n +2})\times L^r_{\delta}(( - \infty, 0], \R)
\end{split}
\]
given by
\begin{equation}
\label{Dtriv}
D  
\begin{pmatrix}
w \\ \eta
\end{pmatrix}
:=
 \begin{pmatrix}
\partial_s w + J \partial_t w + A w +\eta \iota \beta\\  \eta' + \int_{\T} \left\langle \pi \circ w, \beta \right\rangle \,dt
\end{pmatrix}.
\end{equation}
More precisely, the linearisation of the problem $\mathcal{M}_{\Phi}(K ;\Lambda)$, when viewed in a suitable symplectic trivialisation, takes the form \eqref{Dtriv}. The fact that in such a trivialisation the tuple $(A_1, \beta^-, 0)$ is regular is explained\footnote{The only reason we have assumed that $K$ is transversally non-degenerate in the exposition here is that this is a standing assumption in \cite{mp11}. As far as these results are concerned this is a completely unnecessary restriction, and proofs in the general case can be found in \cite[Section 4]{cf09}, albeit with a slightly different formulation.}
 in detail in \cite[Section 2.4]{mp11}. In fact, since we are working with a hypersurface of restricted contact type, one can always choose the trivialisation so that the number $\lambda_{A_1, \beta^-}$ from \eqref{lambda_A,beta} has the same sign as $\tau$, see the discussion on \cite[p95]{mp11}, although we will not need this.

The idea now is to homotope the operator $D$ from \eqref{Dtriv} into a new operator  of product form $(w, \eta) \mapsto (D'(w), D''(\eta))$, i.e. one for which the equations are decoupled. Let us define $ S := J \partial_t + A$ and 
\[
 B : W^{1,r}_{\delta}( ( - \infty,0] \times \T, \R^{4n+2} ) \to W^{1,r}_{\delta}( (- \infty,0],\R), \qquad B(w):= \int_{\T} \left\langle \pi \circ w, \beta \right\rangle \, dt,  
 \] 
so the the operator $D$ from \eqref{Dtriv} can be concisely written as
\[
D = \partial_s +  \begin{pmatrix}
S & \iota \beta \\
B & 0
\end{pmatrix}.
\]
We will consider a homotopy $\{ D_{\theta }\}_{ \theta \in [0,1]}$ of operators of the form 
\[
D_{\theta} = \partial_s +  \begin{pmatrix}
S & (1 - \theta) \iota \beta \\
(1 - \theta) B & c( \theta)
\end{pmatrix},
\]
where $c : [0, 1] \to \R$ satisfies $c(0) = 0$.  The operators $D_{\theta}$ are all Fredholm of the same index, provided the tuple $(A_1, ( 1 - \theta) \beta^- , c(\theta))$ remains regular in the sense of \cite[Definition 2.1]{mp11}.  In other words, we must choose $c( \theta)$ so that
\[
\lambda_{A_1,(1 - \theta)\beta^-}  \ne c(\theta), \qquad \forall \, \theta \in [0,1],
\]
where the real number $\lambda_{A_1,(1 - \theta)\beta^-}$ is defined as in \eqref{lambda_A,beta}. Since we assumed our original tuple $(A_1, \beta^-, 0)$ was regular, such a choice can clearly always be made. Moreover one readily sees that
\begin{equation}
\label{signctheta}
\mathrm{sgn\,}c(1) = \mathrm{sgn\,}\lambda_{A_1, \beta^-}.
\end{equation}
The final operator $D_1$ is given by
\begin{equation}
\label{DNEWtriv}
D_1  
\begin{pmatrix}
w \\ \eta
\end{pmatrix}
:=
 \begin{pmatrix}
\partial_s w + J \partial_t w + A w \\  \eta' + c(1)\eta
\end{pmatrix},
\end{equation}
and hence
\begin{equation}
\label{indexsum}
\mathrm{ind\,}D_1 = \mathrm{ind\,}D' + \mathrm{ind\,}D'',
\end{equation}
where 
\[
D'  : W^{1,r}_{\delta, \Delta \times 0 \times \R}((-\infty ,0] \times \T , \R^{4n + 2})  \to
L^r_{\delta}((- \infty,0] \times \T , \R^{4n +2}) ,
\]
\[
D'(w)  : = \partial_s w + J \partial_t w + A w,
\]
and
\[
D'' : W^{1,r}( (- \infty, 0], \R ) \to L^r( (- \infty,0 ], \R),
\]
\[
D''(\eta) : = \eta' + c(1)\eta.
\]
The index of $D'$ is given by 
\begin{equation}
\label{indexD'}
\mathrm{ind\,}D' = \mu_{\mathrm{rs}}(W_1) - \tfrac{1}{2}\nu(W_1)+ \mu_{\mathrm{rs}}(W_2)  - \tfrac{1}{2}\nu(W_2)+ k,
\end{equation}
where the correction term $k$ is determined by the boundary condition \eqref{newbdy}, and
\[
\nu(W_1) = \dim \ker (W_1(1) - I_n), \qquad \nu(W_2) = \dim \ker (W_2(1) - I_{n+1}).
 \] 
 Note that the assumption $K$ is transversally non-degenerate translates implies that $\nu(W_1) = 1$.
 The correction term $k$ is computed in \cite[Theorem 5.24]{as10} to be
\[
k = \frac{m}{2}  - \frac{1}{2} \bigl( \dim W_0 + 2 \dim V_0 - 2 \dim W_0 \cap (V_0\times V_0) \bigr),
\]
where $2m$ is the dimension of the ambient space $M \times M \times T^* \R$, $W_0$ denotes the diagonal in $(\R^n \times \R^n \times \R)^2$ and $V_0$ is the subspace $\Delta_{\R^n} \times (0)$ of $\R^n \times \R^n \times \R$, where $\Delta_{\R^n}$ is the diagonal in $\R^n \times \R^n$. Since 
\[
m = 2n +1, \qquad \dim W_0 = 2n + 1, \qquad \dim V_0 = n, \qquad \dim W_0 \cap (V_0\times V_0) = n,
\]
the correction term $k$ vanishes. Meanwhile - provided $|c(1) |$ is sufficiently small, given that we are working with weighted Sobolev spaces - one easily sees that
\begin{equation}
\label{indexD''}
\mathrm{ind\,}D'' = \begin{cases}
0, & c(1) > 0, \\
1, & c(1) < 0.
\end{cases}
\end{equation}
By definition (recall we reversed the maps $(u^+, \eta^+,\zeta^+)$ in the definition of $w$), one has
\begin{equation}
\label{indexLambda}
- \mu(\Lambda) = \mu_{\mathrm{rs}}(W_2) + \tfrac{1}{2}\nu(W_2).
\end{equation}
Finally, in this formulation the main result of \cite[Theorem 1.10]{mp11} tells us that
\begin{equation}
\label{indexK}
\mu(K) = \begin{cases}
\mu_{\mathrm{rs}}(W_1) - \tfrac{1}{2}\nu(W_1), & \lambda_{A_1 , \beta^-} > 0, \\
\mu_{\mathrm{rs}}(W_1) - \tfrac{1}{2}\nu(W_1) + 1, &  \lambda_{A_1 , \beta^-} < 0. 
\end{cases}
\end{equation}
Putting \eqref{signctheta} and \eqref{indexsum}-\eqref{indexK} all together, we conclude that
\[
\mathrm{ind\,}D = \mu(K) - \mu(\Lambda) - \dim \,\Lambda,
\]
and hence
\[
\dim\,\mathcal{M}_{\Phi}(K ;\Lambda) = \mathrm{ind\,}D + \dim K +
\dim \Lambda =\mu(K) + \dim K- \mu(\Lambda).  
\]
This completes the proof.
\end{proof}

\subsection{Automatic transversality at stationary solutions of the hybrid problem}
\label{auto_trans_sec}
If the space $\mathcal{M}_ \Phi( \Lambda ;K)$ contains no zero-energy solutions then the proof that generically $\mathcal{M}_ \Phi(\Lambda; K)$ carries the structure of a smooth manifold contains no ideas not already present in the proof of Theorem \ref{transcylinders} and Theorem \ref{transcylindersRF}. Therefore we omit the details. However we do still need to prove the automatic transversality result used in Section \ref{theisomorphismsection}. First we will need the following simple lemma.
\begin{lem}
\label{lem:equality of Hessians}
Suppose $ \hat{x} = (x,\tau)\in\mathrm{crit}\,\mathcal{A}_{H}$.
Let $v\in C^{\infty}(\mathbb{T},x^{*}TM)$ and $\rho\in\mathbb{R}$.
Let $\tilde{x}(t):=(x(t),\tau, \sigma)$ for some $ \sigma \in \R^*$. Then for any $\xi\in C^{\infty}(\mathbb{T},\mathbb{R}^{*})$,
if we define $w(t):=(v(t),\rho,\xi(t))\in C^{\infty}(\mathbb{T},\tilde{x}^{*}T\tilde{M})$
then 
\[
d^{2}\mathcal{A}_{H}(\hat{x})[(v,\rho),(v,\rho)]=d^{2}\mathbb{A}_{\tilde{H}}(\tilde{x})[w,w].
\]
\end{lem}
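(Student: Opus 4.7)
The strategy is a direct computation of both Hessians at the critical point, by linearising the respective $L^2$-gradient formulas (\ref{rfh_gradient}) and (\ref{nabla}) and pairing with the relevant variation. The punchline will be that the $\xi$-component of $w$ appears in the Hessian of $\mathbb{A}_{\tilde{H}}$ only through terms proportional to $\eta'$, hence disappears when $\eta = \rho$ is constant.

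First I linearise $\nabla \mathbb{A}_{\tilde{H}}$ at $\tilde{x}$ along $w = (v, \rho, \xi)$. From (\ref{nabla}),
\[
\nabla \mathbb{A}_{\tilde{H}}(\tilde{x}) = \bigl( J_t(x,\tau)(x' - \tau X_H(x)),\; \sigma' - H(x),\; -\tau' \bigr),
\]
and at the critical point $\tilde{x}$ each of the three components vanishes; consequently, when differentiating, all contributions from the variation of $J_t(x,\tau)$ in $x$ and $\tau$ drop out, and only the leading-order terms remain. One obtains
\[
\delta \nabla \mathbb{A}_{\tilde{H}}[w] = \bigl( J_t(x,\tau)(\nabla_t v - \rho X_H(x) - \tau \nabla_v X_H(x)),\; \xi' - dH(x)[v],\; 0 \bigr),
\]
where I used that the variation of $\tau$ is the constant $\rho$, so $\eta' \equiv 0$.

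Next I pair this with $w$ in the metric $\langle\cdot,\cdot\rangle_{\tilde{J}_t(\tilde{x})}$, using $\langle Ju, v\rangle_{J} = \omega(v,u)$ and the identity $\omega(v, X_H(x)) = dH(x)[v]$ to get
\[
d^2 \mathbb{A}_{\tilde{H}}(\tilde{x})[w,w] = \int_{\T} \omega\bigl( v, \nabla_t v - \tau \nabla_v X_H(x) \bigr)\, dt - 2\rho \int_{\T} dH(x)[v]\, dt + \int_{\T} \xi' \rho\, dt.
\]
The last integral vanishes trivially because $\rho$ is a constant and $\T$ has no boundary, hence the $\xi$-component of $w$ contributes nothing, as claimed. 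An entirely parallel computation for $\mathcal{A}_H$ starting from (\ref{rfh_gradient}), where the second component $-\int_{\T} H(u)\,dt$ is scalar, yields exactly the same expression
\[
d^2 \mathcal{A}_H(\hat{x})[(v,\rho),(v,\rho)] = \int_{\T} \omega\bigl( v, \nabla_t v - \tau \nabla_v X_H(x) \bigr)\, dt - 2\rho \int_{\T} dH(x)[v]\, dt.
\]
Comparing the two identities proves the equality.

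The only delicate point is the bookkeeping that verifies the disappearance of the $\xi$-dependent contributions. Since the $\xi$-variable couples to $\tau$ only via the symplectic pairing $-\int_{\T} \sigma \tau'\, dt$ (which after integration by parts contributes $\xi'\eta - \xi \eta'$ type terms) and since $\eta = \rho$ is constant on $\T$, every such term is either a total $t$-derivative or identically zero. This is what makes the equality independent of the choice of $\xi$.
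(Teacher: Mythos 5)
Your proof is correct and follows essentially the same route as the paper's: compute both Hessians directly as pairings of the linearised gradients with the variation, then observe that all $\xi$-dependent contributions come either from $\int_{\T}\rho\xi'\,dt$ (which vanishes because $\rho$ is constant and $\xi$ is a loop) or from $\int_{\T}\rho'\xi\,dt$ (which vanishes because $\rho'\equiv 0$). The only cosmetic difference is that you discard the $J$-variation terms at the outset — noting they multiply $x'-\tau X_H(x)=0$ at the critical point — whereas the paper carries them along and relies on their appearing identically on both sides; both bookkeeping choices lead to the same conclusion.
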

\begin{proof}
One has 
\begin{align*}
d^{2}\mathbb{A}_{\tilde{H}}(\tilde{x})[w,w] & =\left\langle \left\langle \nabla^{2}\mathbb{A}_{\tilde{H}}(\tilde{x})[w],w\right\rangle \right\rangle \\
 & =\int_{\mathbb{T}}\omega(v,\nabla_{t}v + (\nabla_v J + \rho \partial_{\rho}J) x'-\tau\nabla_{v}X_{H}(x)-\rho X_{H}(x))dt\\
 & +\int_{\mathbb{T}}\rho(\xi'-dH(x)[v])dt-\int_{\mathbb{T}}\rho'\xi dt.
\end{align*}
 Now since $\rho$ is constant, clearly $\int_{\mathbb{T}}\rho'\xi dt=0$.
Since $\xi$ is a loop, $\int_{\mathbb{T}}\rho\xi'dt=0$, and thus
\begin{align*}
d^{2}\mathbb{A}_{\tilde{H}}(\tilde{x})[w,w] & =\int_{\mathbb{T}}\omega(v,\nabla_{t}v+ (\nabla_v J + \rho \partial_{\rho}J) x'-\tau\nabla_{v}X_{H}(x)-\rho X_{H}(x))dt-\rho\int_{\mathbb{T}}dH(x)[v])dt\\
 & =d^{2}\mathcal{A}_{H}( \hat{x})[(v,\rho),(v,\rho)].
\end{align*}

\end{proof}
Here is the promised automatic transversality result. 
\begin{lem}
\label{autotrans}
Suppose $ \hat{x} \in\mathrm{crit}\, f$. Let $K \subset \crit \mathcal{A}_H$ denote the connected component containing $ \hat{x}$, and let $ \Lambda = \pi_{\mathcal{K}}^{-1}( K) \subset \crit \A_{ \tilde{H}}$. Then automatic transversality
holds for the problem $\mathcal{M}_{\Phi}(\hat{x}; \hat{x})$. More
precisely $\mathcal{M}_{\Phi}(\hat{x}; \hat{x})$ contains only the
stationary solutions: 
\[
\mathcal{M}_{\Phi}(\hat{x}; \hat{x}) =\left\{ (\hat{x}, (\hat{x}, \sigma )) \mid \sigma \in \R^* )\right\} \subset\mathcal{M}_{\Phi}(K; \Lambda),
\]
and if $D_{ \sigma} $ denotes the linearisation of the problem $\mathcal{M}_{\Phi}(\hat{x};\hat{x}) $
at the stationary solution $(\hat{x}, (\hat{x}, \sigma)) $ then the operator
$D_{ \sigma} $ is surjective.\end{lem}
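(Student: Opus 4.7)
The plan has two parts. The first part --- that $\mathcal{M}_\Phi(\hat{x}; \hat{x})$ consists only of stationary solutions --- follows from the sharp energy identity (\ref{sei}) combined with the identification $\mathcal{A}_H(\hat{y}) = \A_{\tilde{H}}(\hat{y})$ for $\hat{y} \in \mathcal{K}$ noted at the start of Section \ref{rabflodiff}. Indeed, any $(v, \tilde{u}) \in \mathcal{M}_\Phi(\hat{x}; \hat{x})$ lies in $\mathcal{M}_\Phi(K; \Lambda)$ with $\mathcal{A}_H(K) = \A_{\tilde{H}}(\Lambda)$, so $\mathbb{E}(v) + \mathbb{E}(\tilde{u}) = 0$; both pieces are $s$-constant, and the coupling conditions (\ref{eq:loop coupling})--(\ref{eq:eta coupling}) identify the pair with $(\hat{x}, (\hat{x}, \sigma))$ for some $\sigma \in \R^*$.

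For surjectivity of $D_\sigma$, my plan is a Fredholm dimension count. Via the fibered product (\ref{fp2}) and Theorem \ref{newindexcompthm},
\[
\mathrm{ind}\, D_\sigma = \dim \mathcal{M}_\Phi(K; \Lambda) + \dim \tilde{\mathcal{C}}_{RF}(\hat{x}, K) + \dim \tilde{\mathcal{C}}(\Lambda, \hat{x}) - \dim K - \dim \Lambda = 1,
\]
using $\dim \mathcal{M}_\Phi(K; \Lambda) = \mu(K) + \dim K - \mu(\Lambda) = \dim K + 1$ (from (\ref{rel-indices})) together with the Morse--Bott dimensions $\dim W^u_{-\nabla f}(\hat{x}) = \mathrm{ind}_f(\hat{x})$ and $\dim W^s_{-\nabla \tilde{f}}(\pi_\mathcal{K}^{-1}(\hat{x})) = \dim \Lambda - \mathrm{ind}_f(\hat{x})$. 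The $\R^*$-translation of stationary solutions provides a 1-dimensional subspace $V \subset \ker D_\sigma$, so surjectivity reduces to showing $\dim \ker D_\sigma \leq 1$.

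The main obstacle is this upper bound on $\ker D_\sigma$. My approach is to pair the linearized equations $\partial_s \hat{v} + \nabla^2 \mathcal{A}_H(\hat{x})[\hat{v}] = 0$ and $\partial_s \hat{\tilde{u}} + \nabla^2 \A_{\tilde{H}}(\tilde{x})[\hat{\tilde{u}}] = 0$ with $\hat{v}$ and $\hat{\tilde{u}}$ in $L^2(\T)$, integrate in $s$ over each half-cylinder, and use integration by parts together with the weighted-Sobolev decay built into the setup of Section \ref{newindexcomp} to obtain
\[
\tfrac{1}{2}\|\hat{v}(0)\|_{L^2(\T)}^2 - \tfrac{1}{2}\|\hat{\tilde{u}}(0, \cdot)\|_{L^2(\T)}^2 + \int_{-\infty}^0 \!\! d^2 \mathcal{A}_H(\hat{x})[\hat{v}, \hat{v}]\, ds + \int_0^{+\infty} \!\! d^2 \A_{\tilde{H}}(\tilde{x})[\hat{\tilde{u}}, \hat{\tilde{u}}]\, ds = 0.
\]
The coupling conditions (\ref{eq:loop coupling})--(\ref{eq:eta coupling}) reduce the boundary difference to $-\tfrac{1}{2}\|\hat{\zeta}^+(0, \cdot)\|_{L^2(\T)}^2$, while Lemma \ref{lem:equality of Hessians} identifies the $(M \times \R)$-contribution to the two Hessian integrands (the lemma applying pointwise in $s$ since $\hat{x}$ and $\tilde{x}$ are critical points). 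The technical heart of the proof is then to combine this identity with the Morse--Bott structure of $\nabla^2 \mathcal{A}_H(\hat{x})$ and $\nabla^2 \A_{\tilde{H}}(\tilde{x})$ --- whose kernels are $T_{\hat{x}} K$ and $T_{\tilde{x}} \Lambda$ respectively --- together with the Morse--Smale property of $\nabla f$ on $K$, in order to force both the strictly-nonzero-eigenmode parts of $\hat{v}$ and $\hat{\tilde{u}}$ and the non-$\R^*$ components of their asymptotic limits to vanish. Once this is done, $\ker D_\sigma = V$ and $D_\sigma$ is surjective.
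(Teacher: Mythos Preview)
Your first part (stationary solutions only) and your index count are correct; note that the paper asserts ``index zero'' because it works modulo the $\R^*$-translation, whereas you keep that direction and get index $1$ with a one-dimensional obvious kernel $V$. Either way the task is the same: show the kernel is no larger than $V$.

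The gap is in how you extract information from the linearized equations. Your integral identity, even if the boundary terms at $\pm\infty$ are handled correctly (they are not zero in general --- the asymptotic limits $\hat v(-\infty)\in T_{\hat x}W^u$ and $\hat{\tilde u}(+\infty)\in T_{\tilde x}W^s$ need not vanish, only lie in $T_{\hat x}K$ and $T_{\tilde x}\Lambda$), contains the integrals $\int d^2\mathcal{A}_H[\hat v,\hat v]\,ds$ and $\int d^2\A_{\tilde H}[\hat{\tilde u},\hat{\tilde u}]\,ds$, and these carry \emph{no sign}: the Hessians of $\mathcal{A}_H$ and $\A_{\tilde H}$ at a critical point are indefinite. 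So the identity, as written, does not obviously force anything to vanish, and the phrase ``combine with the Morse--Bott structure to force the strictly-nonzero-eigenmode parts to vanish'' hides the whole difficulty.

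The paper's argument avoids this by pairing the equation not with $\hat{\tilde u}$ but with $\nabla^2\A_{\tilde H}[\hat{\tilde u}]$ (equivalently, differentiating $\varphi(s)=\|\hat{\tilde u}(s,\cdot)\|^2$ twice): one gets $\varphi''(s)=4\|\nabla^2\A_{\tilde H}[\hat{\tilde u}]\|^2\ge 0$, so $\varphi$ is convex on $[0,+\infty)$ and bounded, hence non-increasing, hence $\varphi'(0)\le 0$, i.e.\ $d^2\A_{\tilde H}(\tilde x)[\hat{\tilde u}(0),\hat{\tilde u}(0)]\ge 0$, with equality only if $\hat{\tilde u}$ is $s$-constant. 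The symmetric argument on $(-\infty,0]$ gives $d^2\mathcal{A}_H(\hat x)[\hat v(0),\hat v(0)]\le 0$. Now Lemma \ref{lem:equality of Hessians} together with the linearized coupling (\ref{eq:loop coupling})--(\ref{eq:eta coupling}) says these two numbers are equal, so both vanish, both $\hat v$ and $\hat{\tilde u}$ are $s$-constant, and one reads off directly that the only such constants compatible with the coupling and asymptotic conditions lie in $V$. The convexity step is the piece your plan is missing.
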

\begin{proof}
We already know that $D_{ \sigma}$ is Fredholm of index zero, and hence it
suffices to show that $D_{ \sigma}$ is injective. Suppose we are given 
\[
v:(-\infty,0]\times\mathbb{T}\rightarrow T_{x(t)}M,\qquad u:[0,+\infty)\times\mathbb{T}\rightarrow T_{x(t)}M,
\]
\[
\xi:(-\infty,0]\rightarrow\mathbb{R},\qquad\eta:[0,+\infty)\times\mathbb{T}\rightarrow\mathbb{R},\qquad\zeta:[0,+\infty)\times\mathbb{T}\rightarrow\mathbb{R}^{*}.
\]
Set 
\[
w=(v,\xi),\qquad z=(u,\eta,\zeta).
\]
Then the operator $D_{ \sigma}$ sends $(w,z)$ to the solution of the linear
problem:
\begin{equation}
\label{autot1}
\begin{split}
\frac{d}{ds}w+\nabla^{2}\mathcal{A}_{H}(\hat{x})[w]=0,\qquad\frac{d}{ds}z+\nabla^{2}\mathbb{A}_{\tilde{H}}( \hat{x}, \sigma)[z]=0,\\
\lim_{s\rightarrow-\infty}w(s)\in T_{\hat{x}}W^{u}_{-\nabla f}(\hat{x}),\qquad\lim_{s\rightarrow+\infty}z(s)\in T_{(\hat{x},\sigma)}W^s_{-\nabla \tilde{f}} (\pi_{\mathcal{K}}^{-1}(\hat{x}),\\
\end{split}
\end{equation}
\begin{equation}
\label{autot2}
\begin{split}
v(0,t)=u(0,t),\qquad\textrm{for all }t\in\mathbb{T},\\
\xi(0)=\eta(0,t),\qquad\textrm{for all }t\in\mathbb{T}.
\end{split}
\end{equation}
Recall that $ \tilde{f} = f \circ \pi_{\mathcal{K}}$ was defined in \eqref{tildef}. We must show that any such solution of \eqref{autot1} and \eqref{autot2} is identically zero. For this
one considers the function 
\[
\varphi(s):=\left\Vert z(s,\cdot)\right\Vert ^{2}.
\]
Then 
\[
\varphi'(s)=-2\left\langle \left\langle \nabla^{2}\mathbb{A}_{\tilde{H}}(z(s,\cdot)),z(s,\cdot)\right\rangle \right\rangle ,
\]
\[
\varphi''(s)=4\left\Vert \nabla^{2}\mathbb{A}_{\tilde{H}}(z(s,\cdot))\right\Vert ^{2}\geq0.
\]
Thus $\varphi$ is convex, and thus either $\varphi\equiv0$ or $\varphi'(0)<0$.
Thus either $z\equiv0$ or 
\[
d^{2}\mathbb{A}_{\tilde{H}}(\hat{x},\sigma)[z(0,\cdot),z(0,\cdot)]>0.
\]
Exactly the same argument shows that either $w\equiv0$ or 
\[
d^{2}\mathcal{A}_{H}(\hat{x})[w(0,\cdot),w(0,\cdot)]<0.
\]
Lemma \ref{lem:equality of Hessians} thus implies that $w\equiv z\equiv0$
as required.\end{proof}



\providecommand{\bysame}{\leavevmode\hbox to3em{\hrulefill}\thinspace}
\providecommand{\MR}{\relax\ifhmode\unskip\space\fi MR }
\providecommand{\MRhref}[2]{%
  \href{http://www.ams.org/mathscinet-getitem?mr=#1}{#2}
}
\providecommand{\href}[2]{#2}

\end{document}